\documentclass{article}
\usepackage{graphicx,geometry,amsmath,amsthm,amssymb}
\usepackage[hypertexnames=false]{hyperref}
\usepackage{array,float}

\newtheorem{theorem}{Theorem}[section]
\newtheorem{lemma}[theorem]{Lemma}

\newtheorem{remark}[theorem]{Remark}
\newtheorem{definition}{Definition}[section]

\numberwithin{equation}{section}

\title{Extension Problems for the Vladimirov--Taibleson Operator and the Hierarchical Laplacian}
\author{Yaojia Sun}
\date{}

\begin{document}

\maketitle

\begingroup
\renewcommand{\thefootnote}{}

\begin{NoHyper}
\footnotetext{%
\textit{Keywords and phrases:}
Vladimirov--Taibleson operator, hierarchical Laplacian,
Caffarelli--Silvestre extension, Dirichlet-to-Neumann map.
\par\noindent\hspace*{1.8em}%
\textit{2020 Mathematics Subject Classification:}
Primary 35S05; Secondary 35R11, 31C20, 11S80.
}
\end{NoHyper}

\endgroup

\begin{abstract}
    We establish a non-Archimedean Caffarelli-Silvestre extension theory for the Vladimirov-Taibleson operator $D^s$ for $s>0$ and, more generally, for hierarchical Laplacians $L_C$. For every $f\in\mathcal{S}(\mathbb{Q}_p^n)$, we equip the Bruhat-tits tree $\mathcal{T}_{p^n}$ with the weight $w_{v_{k-1}v_k}=p^{k(s-n)}$ under the horocyclic coordinates. We prove that the resulting Dirichlet problem admits a unique bounded weighted-harmonic solution that is continuous on the end compactification. Its boundary traces converge to $f$ uniformly and in $L^2$, while the associated deformed normal derivative converges pointwise and in $L^2$ to $D^s f$. We derive explicit Fourier and Poisson representations of the extension and establish an energy identity between its weighted tree energy and the quadratic form of $D^s$. Motivated by $p$-adic AdS/CFT, we also give an equivalent massive formulation on the unweighted tree, based on a renormalized trace and a scale-corrected normal derivative. Finally, under natural local-finiteness and scaling assumptions, we extend the construction to hierarchical Laplacians $L_C$ on ultrametric spaces. We characterize the canonical edge conductances by requiring the cancellation Green operator on the ultrametric tree $\mathcal{T}_X$ to coincide with $L_C^{-1}$ on $\mathcal{S}_0(X)$. In the corresponding canonical flux class, the extension is unique, its Dirichlet-to-Neumann map is $L_C$, and it satisfies the associated energy identity. The Vladimirov-Taibleson construction is recovered as the homogeneous special case.
\end{abstract}
\section{Introduction}
In 2007, Caffarelli and Silvestre introduced an extension formulation for the fractional Laplacian \cite{MR2354493}. They showed that, for $0<s<1$ and suitable smooth functions $f:\mathbb{R}^n\to\mathbb{R}$, there exists a function $u(x,y)$ on $\mathbb{R}_+^{n+1}$ satisfying
\begin{align*}
    \begin{cases}
        \nabla\cdot(y^{1-2s}\nabla u)=0,&(x,y)\in\mathbb{R}_+^{n+1};\\
        u(x,0)=f(x),&x\in\mathbb{R}^n.
    \end{cases}
\end{align*}
The nonlocal operator $(-\Delta)^s$ is then realized as the Dirichlet-to-Neumann map for this local equation at the boundary $y=0$:
\begin{align*}
    (-\Delta)^sf(x)=-C_{n,s}\lim_{y\to0^+}y^{1-2s}u_y(x,y)=-2sC_{n,s}\lim_{y\to0^+}\frac{u(x,y)-f(x)}{y^{2s}}.
\end{align*}
Thus, the fractional Laplacian $(-\Delta)^s$ on $\mathbb{R}^n$ is recovered as a weighted normal derivative of an elliptic extension to the upper half-space $\mathbb{R}_+^{n+1}$.

This extension theory has proved particularly useful in the study of free boundary problems, including the nonlocal obstacle problem. In the same year, Silvestre obtained nearly optimal regularity for solutions of this problem \cite{MR2270163}. The following year, Caffarelli, Salsa, and Silvestre recast the $n$-dimensional fractional obstacle problem as a thin obstacle problem (the Signorini problem) in one higher dimension \cite{MR2367025}. Using a generalized Almgren-type frequency formula, they established optimal $C^{1,s}$ regularity and gave a precise account of the dichotomy between regular and singular free-boundary points. Fernández-Real and Ros-Oton later studied the critical-drift problem and showed that the homogeneity at a regular free-boundary point depends on the normal component of the drift \cite{MR3831283}.

The theory was subsequently extended to broader classes of operators and to bounded domains. In 2010, Cabré and Tan formulated an extension problem on a semi-infinite cylinder for the square root of the spectral Laplacian on a bounded domain. They also adapted the moving-plane method to the nonlocal setting and proved symmetry results for positive solutions \cite{MR2646117}. In the same year, Stinga and Torrea extended the Caffarelli--Silvestre construction to fractional powers of nonnegative self-adjoint operators, including the harmonic oscillator, using heat semigroups and integral formulas; they also obtained the corresponding Harnack inequalities \cite{MR2754080}. In 2014, Ros-Oton and Serra established a Pohozaev identity for the fractional Laplacian in which the classical normal derivative is replaced in the boundary term by $\left.\frac{u}{\delta^s}\right|_{\partial\Omega}$ \cite{MR3211861}.

Extension methods have also found geometric applications. In 2011, Chang and González placed the Caffarelli--Silvestre extension in the setting of Poincaré--Einstein asymptotically hyperbolic manifolds, thereby providing a PDE approach to the fractional Yamabe problem \cite{MR2737789}. Frank et al. later formulated an extension problem for the CR fractional Laplacian on the Heisenberg group. Their construction is related to the Siegel realization of complex hyperbolic space and preserves the conformal covariance of the boundary operator \cite{MR3286532}.

Subsequent work addressed equations with rough coefficients and operators that are nonlocal in both space and time. In 2016, Caffarelli and Stinga obtained Caccioppoli-type inequalities and Schauder estimates for fractional powers of uniformly elliptic divergence-form operators with bounded measurable coefficients \cite{MR3489634}. The following year, Stinga and Torrea constructed a degenerate parabolic extension equation involving the time derivative and proved the corresponding space-time Harnack estimates \cite{MR3709888}. More recently, Otárola and Salgado approximated the higher-order powers $(-\Delta)^s$, $s\in(1,2)$, of the spectral fractional Laplacian using a polyharmonic extension and a mixed finite-element discretization \cite{MR5081849}.

Nearly forty years before the PDE formulation of Caffarelli and Silvestre, Molchanov and Ostrovskii had studied the localization of nonlocal operators from a probabilistic perspective. They realized symmetric stable processes as boundary traces of higher-dimensional degenerate diffusion processes \cite{MR247668}. Their work provides probabilistic motivation for later tree-based extensions on totally disconnected spaces.

Because $\mathbb{Q}_p^n$ is totally disconnected, the classical differential operators of Euclidean analysis have no direct counterpart in this setting. In 1990, Vladimirov studied the Fourier-defined pseudodifferential operator $D^s$, which serves as the $p$-adic analog of the fractional Laplacian \cite{MR1092528}.

For $\mathbb{Q}_p$, the Bruhat--Tits tree $\mathcal{T}_p$ serves as a discrete hyperbolic space---the $p$-adic upper half-plane---in arithmetic geometry and $p$-adic holography. It is therefore natural to seek a Caffarelli--Silvestre-type extension of the Vladimirov derivative on $\mathcal{T}_p$. In his 1989 work on $p$-adic string theory, Zabrodin realized the operator $D$ via a discrete harmonic extension problem on $\mathcal{T}_p$. This construction became a prototype for the subsequent $p$-adic AdS/CFT correspondence \cite{MR1003429}. More precisely, he considered the Dirichlet problem
\begin{align*}
    \begin{cases}
        \Delta\varphi(v)=0,&v\in\mathcal{T}_p;\\
        \varphi|_{\partial\mathcal{T}_p}(x)=f(x),&x\in\partial\mathcal{T}_p\cong\mathbb{P}^1(\mathbb{Q}_p).
    \end{cases}
\end{align*}
He then defined the boundary normal derivative by
\begin{align*}
    \partial_n^{(p)}\varphi(x)=\lim_{v\to x}(f(x)-\varphi(v))p^{d(C,v)}.
\end{align*}
For $s=1$, he proved that the boundary response satisfies
\begin{align*}
    \partial_n^{(p)}\varphi(x)=p^{-1}\max\{1,|x|_p^2\}Df(x),
\end{align*}
together with the energy identity
\begin{align*}
    \frac{1}{2}\sum_{\substack{u,v\in\mathcal{T}_p\\u\sim v}}(\varphi(u)-\varphi(v))^2=(1-p^{-1})\int_{\mathbb{Q}_p}f(x)Df(x)dx.
\end{align*}
More recently, Huang and Sun extended this construction to arbitrary $s>0$ by considering the weighted graph Laplacian problem \cite{huang2026energyrelationsgeneralisedvladimirov}:
\begin{align*}
    \begin{cases}
        \Delta_w\varphi(v)=0,&v\in\mathcal{T}_p;\\
        \varphi|_{\partial\mathcal{T}_p}(x)=f(x),&x\in\partial\mathcal{T}_p\cong\mathbb{P}^1(\mathbb{Q}_p).
    \end{cases}
\end{align*}
They defined the deformed boundary normal derivative by
\begin{align*}
    \partial_{n,s}^{(p)}\varphi(x)=\lim_{v\to x}(f(x)-\varphi(v))p^{d(C,v)s},
\end{align*}
where
\begin{align*}
    w_{uv}=\frac{1}{2}\left(p^{d(C,u)(s-1)}+p^{d(C,v)(s-1)}\right),
\end{align*}
for adjacent vertices $u\sim v$, with $C$ denoting the base point in $\mathcal{T}_p$. They proved that
\begin{align*}
    \partial_{n,s}^{(p)}\varphi(x)=\int_{\partial\mathcal{T}_p}\left(\frac{C_1}{|x,y|_p^{1+s}}+C_2\right)(f(x)-f(y))d\mu_0(y),
\end{align*}
where $C_1=\frac{p^s-1}{p^s-p^{-1}}$ and $C_2=\frac{p-p^s}{(p+1)(p^s-p^{-1})}$, and established the energy identity
\begin{align*}
    \frac{1}{2}\sum_{\substack{u,v\in\mathcal{T}_p\\u\sim v}}(\varphi(u)-\varphi(v))^2w_{uv}=&\frac{(1-p^{-s})(1+p^{1-s})}{2}\int_{\mathbb{Q}_p}a(x)f(x)D^s\left(a(x)f(x)\right)dx\\
    &+\frac{(1-p^{-s})(p^{s-1}-p^{1-s})}{2(1+p^{-1})(1-p^{-1-s})}
    \left(\int_{\partial\mathcal{T}_p}f(x)d\mu_0(x)\right)^2,
\end{align*}
where 
\begin{align*}
    a(x)=\begin{cases}
        1,&x\in\mathbb{Z}_p;\\
        |x|_p^{s-1},&x\in\mathbb{Q}_p\setminus\mathbb{Z}_p.
        \end{cases}.
\end{align*}
Here $\mu_0$ is a $GL(2,\mathbb{Z}_p)$-invariant measure on the boundary $\partial\mathcal{T}_p$. For
\begin{align*}
    B_v=\{u\in\mathcal{T}_p\mid\text{there is a path }C\to v\to u\},
\end{align*}
the boundary shadow $\partial B_v$ is normalized by
\begin{align*}
    \mu_0(\partial B_v)=p^{-d(C,v)}.
\end{align*}
The corresponding visual ultrametric is
\begin{align*}
    |x,y|_p=p^{-\delta(C\to x,C\to y)},
\end{align*}
where $\delta(C\to x,C\to y)$ denotes the length of the common segment of the paths from $C$ to $x$ and from $C$ to $y$.
\begin{remark}
    Zabrodin's treatment of the case $s=1$ uses radial coordinates based at a fixed vertex $C\in\mathcal{T}_p$. The general-$s$ construction above uses the same radial normalization, which is not adapted to additive translations on the affine boundary $\mathbb{Q}_p$ and therefore makes Fourier analysis less direct. In affine coordinates, its boundary response is not simply the translation-invariant operator $D^s$; moreover, for general $s$, the energy contains the weight $a(x)$ and the additional mean term $\left(\int_{\partial\mathcal{T}_p}f\,d\mu_0\right)^2$. The horocyclic normalization adopted below preserves affine translation symmetry.
\end{remark}

In higher dimensions, $PGL(n+1,\mathbb{Q}_p)$ acts on an $n$-dimensional affine Bruhat--Tits building. For the isotropic operator considered here, however, only the maximum norm $\|\boldsymbol{x}\|_p$ is relevant, and the corresponding nested maximum-norm balls form a regular $p^n$-ary tree.

In 2017, Gubser, Knaute, Parikh, Samberg, and Witaszczyk constructed a $p$-adic AdS/CFT model in which a $(p^n+1)$-regular Bruhat--Tits tree replaces the Euclidean AdS bulk and has finite boundary $\mathbb{Q}_{p^n}$, the unramified degree-$n$ extension of $\mathbb{Q}_p$ \cite{MR3631398}. The resulting boundary correlation functions resemble their Archimedean AdS/CFT counterparts and exhibit the scaling behavior of Vladimirov--Taibleson-type operators. After fixing the norm- and measure-compatible identification described below, we regard the finite boundary $\mathbb{Q}_{p^n}$ as $\mathbb{Q}_p^n$. This motivates our use of the regular tree $\mathcal{T}_{p^n}$, rather than the full affine building, as the extension space for the isotropic maximum-norm operator. Pierce, Rajkumar, Stine, Weisbart, and Yassine later realized local-field Brownian motion as a scaling limit of discrete random walks using martingale and stochastic-process methods \cite{MR4808793}.

Using the horocyclic coordinates of \cite{MR3631398}, we follow the variational approach of \cite{MR2354493}: we define the energy functional and derive its Euler--Lagrange equation. We then establish the Caffarelli--Silvestre extension for the Vladimirov--Taibleson operator by two complementary methods, one based on the Fourier transform and the other on the Poisson integral formula. The first main result is the following.

\begin{theorem}[Caffarelli--Silvestre extension for the Vladimirov--Taibleson operator]
\label{thm:Extension problems related to the Vladimirov-Taibleson operator}
Let $f\in \mathcal{S}(\mathbb{Q}_p^n)$ and $s>0$. Consider the following extension problem
\begin{align*}
    \begin{cases}
    \Delta_w\varphi(v)=0,&v\in \mathcal{T}_{p^n};\\
    \displaystyle\lim_{k\to+\infty}\varphi_k(\boldsymbol{x})=f(\boldsymbol{x}),&\boldsymbol{x}\in\mathbb{Q}_p^n;\\
    \displaystyle\lim_{k\to-\infty}\varphi_k=0.&
\end{cases}
\end{align*}
The last boundary condition is understood as an end limit: $\varphi(v)\to0$ along every sequence of vertices $v\to\infty$. Here $w_{v_{k-1},v_k}=p^{k(s-n)}$ and, in horocyclic coordinates, $v_k=(\boldsymbol{x},k)$ with $k\in\mathbb{Z}$ and $\boldsymbol{x}\in\mathbb{Q}_p^n/p^k\mathbb{Z}_p^n$. This problem has a unique bounded weighted-harmonic solution that is continuous on the end compactification. Moreover, $\varphi_k\to f$ uniformly and in $L^2(\mathbb{Q}_p^n)$ as $k\to+\infty$. Define the deformed boundary normal derivative by
\begin{align*}
        \partial_{n,s}^{(p)}\varphi(\boldsymbol{x})=\lim_{k\to+\infty}p^{(k+1)s}\left(f(\boldsymbol{x})-\varphi_k(\boldsymbol{x})\right),
    \end{align*}
where the limit exists pointwise and in $L^2(\mathbb{Q}_p^n)$. The resulting Dirichlet-to-Neumann map is the Vladimirov--Taibleson operator:
    \begin{align*}
    \partial_{n,s}^{(p)}\varphi(\boldsymbol{x})=D^sf(\boldsymbol{x}).
\end{align*}
The solution also satisfies the energy identity
\begin{align*}
    \mathcal{E}[\varphi]=&\sum_{k\in\mathbb{Z}}p^{ks}\int_{\mathbb{Q}_p^n}|\varphi_k(\boldsymbol{x})-\varphi_{k-1}(\boldsymbol{x})|^2d\boldsymbol{x}\\
    =&\frac{1}{2}\sum_{u\in \mathcal{T}_{p^n}}\sum_{v\sim u}w_{uv}|\varphi(u)-\varphi(v)|^2\\
    =&(1-p^{-s})\int_{\mathbb{Q}_p^n}f(\boldsymbol{x})D^sf(\boldsymbol{x})d\boldsymbol{x}.
\end{align*}
\end{theorem}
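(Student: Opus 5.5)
The plan is to solve the problem explicitly on the Fourier side, read off the Dirichlet-to-Neumann map and the energy from the multiplier, and get uniqueness from a maximum principle on the end compactification.

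\textbf{Setup and reduction to a recursion.} In horocyclic coordinates, identify the vertex $(\boldsymbol{x},k)$ with the ball $\boldsymbol{x}+p^k\mathbb{Z}_p^n$. Write $\varphi_k(\boldsymbol{x})$ for the value of $\varphi$ at the level-$k$ vertex containing $\boldsymbol{x}$. Then $\varphi_k$ is a function on $\mathbb{Q}_p^n$ that is constant on cosets of $p^k\mathbb{Z}_p^n$, so its Fourier transform is supported in $\{|\boldsymbol{\xi}|_p\le p^k\}$. Each level-$k$ vertex has one parent at level $k-1$ (edge weight $p^{k(s-n)}$) and $p^n$ children at level $k+1$ (edge weights $p^{(k+1)(s-n)}$). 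Let $E_k$ denote averaging over balls of radius $p^{-k}$, i.e.\ the Fourier multiplier $\mathbf 1_{\{|\boldsymbol\xi|_p\le p^k\}}$. Then $\Delta_w\varphi=0$ reads
\begin{align*}
p^{s}\bigl(E_k\varphi_{k+1}-\varphi_k\bigr)=\varphi_k-\varphi_{k-1},\qquad k\in\mathbb{Z}.
\end{align*}
I look for a solution of the form $\widehat{\varphi_k}(\boldsymbol\xi)=m_k(\boldsymbol\xi)\hat f(\boldsymbol\xi)$. Fix $|\boldsymbol\xi|_p=p^j$. The support constraint forces $m_k=0$ for $k<j$. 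For $k>j$ the recursion $p^s(m_{k+1}-m_k)=m_k-m_{k-1}$ has general solution $A+Bp^{-ks}$. Boundedness together with the trace condition forces $A=1$. The transition equation at $k=j$, namely $(1+p^s)m_j=p^sm_{j+1}$, forces $B=-p^{(j-1)s}$. Hence
\begin{align*}
m_k(\boldsymbol\xi)=\bigl(1-p^{(j-1-k)s}\bigr)\mathbf 1_{\{k\ge j\}},\qquad |\boldsymbol\xi|_p=p^j,
\end{align*}
and the equations for $k<j$ hold trivially. Inverting gives a Poisson-type convolution kernel, which I would record as the Poisson representation.

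\textbf{Boundary behaviour and the Dirichlet-to-Neumann map.} Since $f\in\mathcal S(\mathbb{Q}_p^n)$, the support of $\hat f$ lies in some ball $\{|\boldsymbol\xi|_p\le p^R\}$. For every $k\ge R$ we then have exactly
\begin{align*}
f-\varphi_k=p^{-(k+1)s}D^sf .
\end{align*}
This holds because $p^{(k+1)s}(1-m_k)=p^{js}=|\boldsymbol\xi|_p^s$. Now $D^sf$ is bounded and lies in $L^2$, since it decays like $\|\boldsymbol x\|_p^{-n-s}$. Therefore $\varphi_k\to f$ uniformly and in $L^2$, and $p^{(k+1)s}(f-\varphi_k)=D^sf$ identically for large $k$. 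This gives the pointwise and $L^2$ limit, with value $D^sf$.

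\textbf{Energy identity.} Each edge joining levels $k-1$ and $k$ corresponds to a ball of measure $p^{-kn}$ and carries weight $p^{k(s-n)}$. So the edge sum equals $\sum_kp^{ks}\int|\varphi_k-\varphi_{k-1}|^2$. By Plancherel, for fixed $|\boldsymbol\xi|_p=p^j$ the level sum is
\begin{align*}
p^{js}(1-p^{-s})^2+\sum_{k>j}p^{ks}p^{2(j-1-k)s}(p^s-1)^2=(1-p^{-s})\,p^{js}.
\end{align*}
Integrating against $|\hat f|^2$ then yields $(1-p^{-s})\int fD^sf$.

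\textbf{End condition and uniqueness.} This is the main obstacle: making the end-compactification statements precise.

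For the limit $k\to-\infty$ I would use $\sup|\varphi_k|\le\int_{|\boldsymbol\xi|_p\le p^k}|\hat f|\to0$.

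The end at infinity can also be approached "horizontally", that is, with $\|\boldsymbol x\|_p\to\infty$ while $k$ stays bounded below. For this I would use that $\varphi_k=f*P_k$, where the kernel satisfies bounds uniform in $k$ and decays like $\|\boldsymbol x\|_p^{-n-s}$. Together with the compact support of $f$, these bounds give $\varphi_k(\boldsymbol x)\to0$ as $\|\boldsymbol x\|_p\to\infty$, uniformly in $k$. Combined with uniform convergence to $f$ on boundary ends, this shows continuity on the compactification.

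For uniqueness, the difference $\psi$ of two solutions is bounded, weighted-harmonic, and extends continuously by $0$ on the compact end space. If $\sup\psi>0$, the supremum would be attained at a vertex. The strong maximum principle for $\Delta_w$ (a positively weighted average) on the connected tree would then make $\psi$ constant, which is incompatible with boundary value $0$. Applying the same argument to $-\psi$ gives $\psi\equiv0$.
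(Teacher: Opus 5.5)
Your proposal is correct and follows essentially the same route as the paper. It obtains the multiplier $\mathbf 1_{\{\|\boldsymbol\xi\|_p\le p^k\}}\bigl(1-p^{-(k+1)s}\|\boldsymbol\xi\|_p^s\bigr)$ from the recursion and the transition level, uses the exact identity $f-\varphi_k=p^{-(k+1)s}D^sf$ for large $k$, computes the energy by the direct geometric series, handles the end $\infty$ via Poisson-kernel decay, and proves uniqueness by the maximum principle on the compact end compactification; the only detail to add is that, since $\mathcal{S}(\mathbb{Q}_p^n)$ consists of complex-valued functions, you should apply the maximum-principle argument to the real and imaginary parts of $\psi$ separately, which is legitimate because $\Delta_w$ has real coefficients.
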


The holographic construction in \cite{MR3631398} suggests the following alternative extension.

\begin{theorem}[Alternative extension for the Vladimirov--Taibleson operator]
    \label{thm:another_Extension_method_related_to_the_Vladimirov-Taibleson_operator}
    Let $f\in \mathcal{S}(\mathbb{Q}_p^n)$ and $\alpha>\frac{n}{2}$. Consider the following extension problem
    \begin{align*}
    \begin{cases}
        (\Delta-m_p^2)\varphi(v)=0,&v\in\mathcal{T}_{p^n};\\
        \displaystyle\lim_{k\to+\infty}p^{(n-\alpha)k}\varphi_k=f,&\text{in }L^2(\mathbb{Q}_p^n).
    \end{cases}
\end{align*}
Here $m_p^2=p^{\alpha}+p^{n-\alpha}-p^n-1$ is the mass-squared spectral parameter; despite the notation, it need not be nonnegative. This problem has a unique solution within the class of layer functions $\varphi_k\in L^2(\mathbb{Q}_p^n)$ that are constant on the cosets of $p^k\mathbb{Z}_p^n$. For Bruhat--Schwartz data, the canonical locally constant representatives of the renormalized layers converge uniformly. If $v_R(\boldsymbol{x})$ denotes the unique height-$R$ vertex on the ray ending at $\boldsymbol{x}$, then
\begin{align*}
    f(\boldsymbol{x})=\displaystyle\lim_{R\to+\infty}
    p^{R(n-\alpha)}\varphi(v_R(\boldsymbol{x})).
\end{align*}
Define the scale-corrected boundary normal derivative by
\begin{align*}
    \partial_{n,\alpha}\varphi(\boldsymbol{x})=\lim_{k\to+\infty}\left(f(\boldsymbol{x})-p^{(n-\alpha)k}\varphi_k(\boldsymbol{x})\right)p^{(k+1)(2\alpha-n)},
\end{align*}
where the limit is taken in $L^2(\mathbb{Q}_p^n)$. This boundary operator again recovers the Vladimirov--Taibleson operator:
\begin{align*}
    \partial_{n,\alpha}\varphi(\boldsymbol{x})=D^{2\alpha-n}f(\boldsymbol{x}).
\end{align*}
Here the equality holds in $L^2(\mathbb{Q}_p^n)$. To define the boundary term at a cutoff $R$, truncate the solution at level $R$ and assign the value $p^{\alpha-n}\varphi(v_R)$ to each of the $p^n$ ghost children of $v_R$. This artificial continuation is used solely to define the cutoff residual
\begin{align*}
    \mathcal{R}_R\varphi(v_R):=\varphi(v_{R-1})-p^{n-\alpha}\varphi(v_R).
\end{align*}
With this convention, the following identity holds:
    \begin{align*}
        \lim_{R\to+\infty}\sum_{v_R\in\partial\mathcal{T}_{p^n}^{R}}\varphi(v_R)\mathcal{R}_R\varphi(v_R)=p^{n-2\alpha}(p^{n-\alpha}-p^{\alpha})\int_{\mathbb{Q}_p^n}f(\boldsymbol{x})D^{2\alpha-n}f(\boldsymbol{x})d\boldsymbol{x}.
    \end{align*}
\end{theorem}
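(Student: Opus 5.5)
The plan is to work one frequency at a time. The equation becomes a constant-coefficient two-term recurrence in the level $k$, solved explicitly on the Fourier side; every claim then reduces to Plancherel and the compact support of $\hat f$. Write $v_k=(\boldsymbol{x},k)$ and note that $v_k$ has one parent $v_{k-1}$ and $p^n$ children at level $k+1$. Since $p^n+1+m_p^2=p^\alpha+p^{n-\alpha}$, the equation $(\Delta-m_p^2)\varphi=0$ reads
\begin{align*}
\varphi_{k-1}+p^n E_k\varphi_{k+1}=(p^\alpha+p^{n-\alpha})\varphi_k ,
\end{align*}
where $E_k$ is the conditional expectation onto functions constant on cosets of $p^k\mathbb{Z}_p^n$. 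For layer functions in $L^2$, being constant on $p^k\mathbb{Z}_p^n$-cosets is equivalent to $\operatorname{supp}\hat\varphi_k\subset\{\|\xi\|_p\le p^k\}$, and $E_k$ is multiplication by $\mathbf 1_{\{\|\xi\|_p\le p^k\}}$.

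Fix $\xi$ with $\|\xi\|_p=p^j$ and set $a_k=\hat\varphi_k(\xi)$. Then $a_k=0$ for $k<j$. For $k\ge j+1$ the recurrence $a_{k-1}+p^na_{k+1}=(p^\alpha+p^{n-\alpha})a_k$ has characteristic roots $p^{\alpha-n}$ and $p^{-\alpha}$, so $a_k=Ap^{(\alpha-n)k}+Bp^{-\alpha k}$. At $k=j$ the recurrence reads $p^na_{j+1}=(p^\alpha+p^{n-\alpha})a_j$, because $a_{j-1}=0$, and a short computation gives
\begin{align*}
B=-A\,p^{n-2\alpha}\|\xi\|_p^{2\alpha-n}.
\end{align*}
Since $\alpha>n/2$, the renormalized layer $p^{(n-\alpha)k}a_k$ tends to $A$, so the boundary condition forces $A=\hat f(\xi)$. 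This gives existence (one checks that the resulting $\varphi_k$ lie in $L^2$ and are constant on the right cosets) and uniqueness: any solution in the class satisfies the same recurrence frequencywise, with the same two constraints. Explicitly,
\begin{align*}
\widehat{f-p^{(n-\alpha)k}\varphi_k}(\xi)=
\begin{cases}
\hat f(\xi)\,p^{n-2\alpha}\bigl(\|\xi\|_pp^{-k}\bigr)^{2\alpha-n}, & \|\xi\|_p\le p^k;\\
\hat f(\xi), & \|\xi\|_p>p^k.
\end{cases}
\end{align*}

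For $f\in\mathcal S(\mathbb{Q}_p^n)$, $\hat f$ is locally constant with compact support, so the second case vanishes once $p^k$ exceeds the support radius. The first case has $L^1$ norm at most $p^{-k(2\alpha-n)}\|\,\|\xi\|_p^{2\alpha-n}\hat f\|_{L^1}\to0$, which gives uniform convergence of the canonical representatives. It also gives the pointwise statement along the ray $v_R(\boldsymbol{x})$, since $\varphi(v_R(\boldsymbol{x}))=\varphi_R(\boldsymbol{x})$. Multiplying by $p^{(k+1)(2\alpha-n)}$ turns the first case into exactly $\|\xi\|_p^{2\alpha-n}\hat f(\xi)$, and the second case is zero for large $k$. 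Hence $\partial_{n,\alpha}\varphi=D^{2\alpha-n}f$ in $L^2$; in fact the identity holds with equality for all large $k$.

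For the cutoff identity, each level-$R$ vertex is a coset of measure $p^{-nR}$, so
\begin{align*}
\sum_{v_R}\varphi(v_R)\mathcal R_R\varphi(v_R)=p^{nR}\int\varphi_R\,(\varphi_{R-1}-p^{n-\alpha}\varphi_R)\,d\boldsymbol{x},
\end{align*}
which I evaluate by Plancherel. For $\|\xi\|_p\le p^{R-1}$ the $A$-parts cancel in $a_{R-1}-p^{n-\alpha}a_R$, leaving $Bp^{-\alpha R}(p^\alpha-p^{n-\alpha})$. Multiplying by $a_R$ and $p^{nR}$:
\begin{itemize}
\item the cross term gives $AB(p^\alpha-p^{n-\alpha})=|\hat f|^2\|\xi\|_p^{2\alpha-n}p^{n-2\alpha}(p^{n-\alpha}-p^\alpha)$, which is the claimed right-hand side;
\item the $B^2$ term is $O\bigl(p^{(n-2\alpha)R}\bigr)\|\,\|\xi\|^{2\alpha-n}\hat f\|_{L^2}^2\cdot\|\xi\|^{2\alpha-n}_{\max}$-type bounded and tends to $0$, since $\alpha>n/2$ and $\hat f$ has compact support;
\item the shell $\|\xi\|_p=p^R$ drops out for large $R$.
\end{itemize}

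The main obstacle is justifying the class bookkeeping. First, one must show that the support condition on $\hat\varphi_k$ is exactly the coset-constancy requirement, and that the recurrence at the bottom level $k=j$ is what selects $B$; otherwise uniqueness fails, because the $p^{-\alpha k}$ mode is invisible to the $L^2$ boundary condition. Second, the frequencies $\|\xi\|_p\to0$ involve infinitely many shells, and there I will rely on integrability of $\|\xi\|_p^{2\alpha-n}|\hat f|$ near $0$, which holds since $2\alpha-n>0$.
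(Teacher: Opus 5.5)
Your proposal is correct and follows essentially the same route as the paper: the frequencywise recurrence with roots $p^{\alpha-n}$ and $p^{-\alpha}$, the first-active-level equation fixing $B=-A\,p^{n-2\alpha}\|\boldsymbol{\xi}\|_p^{2\alpha-n}$, the renormalized $L^2$ limit fixing $A=\hat f$, and Plancherel for both the normal derivative and the ghost-layer sum. The only difference is cosmetic: the paper rewrites $\mathcal{R}_R\varphi$ through the two normal-derivative quotients at levels $R$ and $R-1$ to get an exact closed formula, whereas you split the same integrand into the cross term plus an $O(p^{(n-2\alpha)R})$ remainder (and the cross term should read $A\overline{B}$, not $AB$, to produce $|\hat f|^2$).
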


We now turn to hierarchical Laplacians. Throughout this part, let $(X,d)$ be a noncompact, locally compact, separable, and perfect ultrametric space with associated ultrametric tree $\mathcal{T}_X$. Let $\mathcal{B}$ be a countable basis of nonempty compact open balls, and let $m$ be a non-atomic Borel Radon measure of full support such that $m(X)=+\infty$ and $0<m(B)<+\infty$ for every $B\in\mathcal{B}$. We assume that every $B\in\mathcal{B}$ has a unique immediate super-ball $B'$, that every ball is the disjoint union of finitely many maximal proper sub-balls, that any two balls are contained in a common ball, and that every $x\in X$ admits a bi-infinite chain
\begin{align*}
    \cdots\supseteq B_{k-1}(x)\supseteq B_k(x)\supseteq B_{k+1}(x)\supseteq\cdots\ni x,
\end{align*}
that shrinks to $\{x\}$ as $k\to+\infty$ and exhausts $X$ as $k\to-\infty$. The hierarchical Laplacian is defined by
    \begin{align*}
        L_Cf(x)=\sum_{\substack{B\in\mathcal{B}\\B\ni x}}C(B)\left(f(x)-\frac{1}{m(B)}\int_{B}f(y)dm(y)\right),
    \end{align*}
where the choice function $C:\mathcal{B}\to(0,+\infty)$ satisfies
\begin{align*}
    \lambda(B):=\sum_{\substack{D\in\mathcal{B}\\D\supseteq B}}C(D)<+\infty,
    \qquad
    \lambda(B_k(x))\longrightarrow+\infty\quad(k\to+\infty).
\end{align*}
Here the first condition holds for every $B\in\mathcal{B}$, and the second holds for every $x\in X$ \cite{MR3269722}. The function $\lambda$ gives the eigenvalue scale; under the convention adopted below, $\lambda(B')$ is the eigenvalue of $L_C$ on the local Haar space $\operatorname{span}W_{B'}$. We determine the extension weights by matching the inverse $L_C^{-1}$ on
\begin{align*}
    \mathcal{S}_0(X)=\left\{f\in \mathcal{S}(X)\bigg|\ \int_Xf(x)dm(x)=0\right\}
\end{align*}
with the cancellation Green operator induced by the graph weight. The third main result is the following.

\begin{theorem}[Caffarelli--Silvestre extension for the hierarchical Laplacian]
\label{thm:Extension_problems_related_to_the_hierarchical_Laplacian}
Let $f\in \mathcal{S}_0(X)$ and assume the standing hypotheses above.
Suppose that
\begin{align*}
    \kappa:=\lim_{k\to+\infty}\frac{\lambda(B_k(x))}{C(B_k(x))}
\end{align*}
exists, belongs to $(0,+\infty)$, and is independent of $x$, with convergence uniform on compact subsets of $X$. Consider the following extension problem
    \begin{align*}
        \begin{cases}
        \Delta_W\varphi(v_B)=0,&v_B\in\mathcal{T}_X;\\
        \varphi|_{\partial\mathcal{T}_X}(x)=f(x),&x\in X;\\
        \varphi(\infty)=0.
    \end{cases}
    \end{align*}
    Here $W(B,B')=\frac{1}{\kappa}\frac{m(B)\lambda(B)\lambda(B')}{C(B)}$, where $B'$ is the unique immediate super-ball of $B$. The problem has a unique solution in the canonical class of harmonic functions with an $L^2(X,m)$ trace and a flux
    \begin{align*}
        \Phi(B)=W(B,B')\left(\varphi(v_B)-\varphi(v_{B'})\right)
    \end{align*}
    of the form
    \begin{align*}
        \Phi(B)=\int_Bg_\varphi(y)dm(y),
        \qquad g_\varphi\in \mathcal{S}_0(X).
    \end{align*}
    For this solution, $g_\varphi=\kappa^{-1}L_Cf$. Given a chain $\cdots\supseteq B_{k-1}(x)\supseteq B_k(x)\supseteq B_{k+1}(x)\supseteq\cdots\ni x$, define the deformed normal derivative at $x\in\partial\mathcal{T}_X\setminus\{\infty\}$ by
    \begin{align*}
        \partial_n^{(W)}\varphi(x)=\lim_{k\to+\infty}\frac{W(B_k,B_{k-1})}{m(B_k)}(f(x)-\varphi(v_{B_{k-1}})),
    \end{align*}
    where the limit exists in $L^2(X,m)$. The corresponding Dirichlet-to-Neumann map is the hierarchical Laplacian:
    \begin{align*}
        \partial_n^{(W)}\varphi(x)=L_Cf(x).
    \end{align*}
    Here the equality holds in $L^2(X,m)$.
    The solution also satisfies the energy identity
\begin{align*}
    \frac{1}{2}\sum_{\substack{v_A,v_B\in \mathcal{T}_{X}\\v_A\sim v_B}}|\varphi(v_A)-\varphi(v_B)|^2W(A,B)
    =&\frac{1}{\kappa}\int_Xf(x)L_Cf(x)dm(x).
\end{align*}
\end{theorem}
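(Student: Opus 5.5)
The plan is to work in the Haar (wavelet) basis adapted to the ball hierarchy. On each ball $B'$ let $W_{B'}$ be the finite-dimensional space of functions that are constant on the maximal proper sub-balls of $B'$, vanish outside $B'$, and have zero $m$-integral. For $f\in\mathcal{S}_0(X)$ we have a finite decomposition $f=\sum_{B'}f_{B'}$ with $f_{B'}\in\operatorname{span}W_{B'}$. The standard spectral fact (cf.\ \cite{MR3269722}) is
\begin{align*}
L_Cf_{B'}=\lambda(B')f_{B'} .
\end{align*}
To see this, note that $f_{B'}$ averages to zero over every ball $D\supseteq B'$. On a ball $D\subsetneq B'$ the function $f_{B'}$ is constant, so the term $f(x)-\frac{1}{m(D)}\int_D f\,dm$ vanishes there. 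Only the balls $D\supseteq B'$ contribute, and their weights sum to $\lambda(B')$.

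\textbf{Step 1: Euler--Lagrange equation and the flux ansatz.} I would first rewrite $\Delta_W\varphi(v_B)=0$ as flux conservation. The flux $\Phi(B)$ from $v_B$ up to its parent must equal the sum of the fluxes $\Phi(D)$ over the children $D$ of $B$. Taking $\Phi(B)=\int_B g\,dm$ satisfies this automatically, by additivity of $m$ over the finite disjoint children. I would then reconstruct $\varphi$ from the flux. Telescoping along the chain,
\begin{align*}
\varphi(v_{B_k(x)})=\sum_{j\le k}\frac{\Phi(B_j(x))}{W(B_j,B_{j-1})},
\end{align*}
using $\varphi(\infty)=0$. For $g\in\mathcal{S}_0$ this sum is finite for large balls: once $B_j$ contains the support of $g$, the mean-zero condition gives $\Phi=0$.

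Next I compute the trace on a single Haar mode $g=h\in W_{B'}$. Here $\Phi(B)$ is nonzero only for the children $B$ of $B'$, where $\Phi(B)=m(B)h|_B$. Hence
\begin{align*}
\varphi(v_{B_k})=\frac{\kappa C(B)}{\lambda(B)\lambda(B')}\,h
\end{align*}
for every $B_k\subseteq B$. So the trace is $\kappa\,\frac{C(B)}{\lambda(B)\lambda(B')}h$. Because $B$ depends on $x$, this is not yet a multiple of $h$.

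I expect this to be the main subtlety. The choice of $W$ must make the Green operator $g\mapsto\text{trace}$ equal $\kappa L_C^{-1}$, which requires $\frac{C(B)}{\lambda(B)\lambda(B')}=\frac{1}{\lambda(B')}$ in the appropriate sense. This does not hold identically. It only holds after using $\lambda(B)=\lambda(B')+C(B)$ and summing the contributions of all scales, or equivalently after passing to the limit through the hypothesis on $\kappa$.

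I would therefore redo the computation for the actual solution rather than mode by mode. Set $g=\kappa^{-1}L_Cf$ and telescope the full sum. In the difference
\begin{align*}
\frac{1}{\lambda(B)}-\frac{1}{\lambda(B')}=\frac{C(B)}{\lambda(B)\lambda(B')}
\end{align*}
the factor $\frac{C(B)}{\lambda(B)\lambda(B')}$ is exactly the increment appearing in $\varphi$. Summing over $j$ therefore collapses to an expression of the form $\frac{1}{\lambda(B_k)}\times(\text{local averages of }L_Cf)$. Writing $L_Cf=\sum\lambda(B')f_{B'}$, I expect the partial sums to give
\begin{align*}
\varphi(v_{B_k})(x)=\sum_{B'\supseteq B_k(x)}f_{B'}(x)+(\text{a tail from modes inside }B_k)\cdot\frac{\lambda(\cdot)}{\lambda(B_k)}.
\end{align*}
The tail tends to $0$ in $L^2$ because $\lambda(B_k)\to\infty$, so the trace is $f$. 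This verifies existence with $g_\varphi=\kappa^{-1}L_Cf$.

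\textbf{Uniqueness.} In the canonical class, linearity reduces uniqueness to the claim that $g\in\mathcal{S}_0$ with zero trace forces $g=0$. The trace map is diagonal in the Haar basis, with the nonzero eigenvalue $\kappa/\lambda(B')$ on $W_{B'}$ after the telescoping above, so it is injective.

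\textbf{Step 2: Dirichlet-to-Neumann map.} The difference $f(x)-\varphi(v_{B_{k-1}})$ is the sum of the increments at levels $j\ge k$, namely $\sum_{j\ge k}\Phi(B_j)/W(B_j,B_{j-1})$. For $k$ large, $B_k$ lies inside a ball on which $g$ is constant, so $\Phi(B_j)=m(B_j)g(x)$ for all $j\ge k$. Multiplying by $W(B_k,B_{k-1})/m(B_k)$ gives
\begin{align*}
g(x)\sum_{j\ge k}\frac{m(B_j)\,W(B_k,B_{k-1})}{m(B_k)\,W(B_j,B_{j-1})}=g(x)\,\lambda(B_k)\lambda(B_{k-1})\,\frac{C(B_k)}{\lambda(B_k)}\sum_{j\ge k}\frac{C(B_j)}{\lambda(B_j)\lambda(B_{j-1})}.
\end{align*}
The sum telescopes to $1/\lambda(B_{k-1})$, so the expression equals $g(x)\,\frac{\lambda(B_k)}{C(B_k)}\cdot\frac{C(B_k)}{\lambda(B_k)}$. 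I would recheck the algebra carefully here. The expected limit is $\kappa g(x)=L_Cf(x)$, obtained via the hypothesis $\lambda/C\to\kappa$. Uniform convergence on the compact support of $f$, together with the observation that outside the support both sides eventually vanish, upgrades pointwise convergence to $L^2$ convergence.

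\textbf{Step 3: Energy identity.} Summation by parts on the tree gives
\begin{align*}
\frac{1}{2}\sum_{v_A\sim v_B} W(A,B)\,|\varphi(v_A)-\varphi(v_B)|^2=\sum_B \Phi(B)\bigl(\varphi(v_B)-\varphi(v_{B'})\bigr).
\end{align*}
Regrouping by flux conservation, this converges to $\int_X f\,g\,dm=\kappa^{-1}\int_X fL_Cf\,dm$. The boundary term at infinity vanishes because $\Phi=0$ on large balls. The boundary term at small scales, $\sum_{B\in\text{level }k}\Phi(B)\varphi(v_B)$, converges to $\int fg\,dm$ by the $L^2$ convergence of the traces from Step 1.
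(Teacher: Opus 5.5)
Your architecture is the paper's: the flux ansatz with $g=\kappa^{-1}L_Cf$, reconstruction of $\varphi$ by telescoping from $\infty$, injectivity of the Haar-diagonal trace map for uniqueness, the telescoping coefficient for the normal derivative, and truncated summation by parts for the energy. However, the computation that actually justifies the weight is wrong, so the trace identity is never established.

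For a mode $h\in\operatorname{span}W_{B'}$ you assert that $\Phi$ is nonzero only on the children of $B'$. In fact $\Phi(D)=\int_Dh\,dm=m(D)\,h|_B$ for \emph{every} ball $D\subseteq B$, where $B$ is the child of $B'$ containing $D$. Since $m(D)/W(D,D')=\kappa\bigl(\lambda(D')^{-1}-\lambda(D)^{-1}\bigr)$, the whole descendant sum telescopes exactly. For the function $\varphi_h$ reconstructed from the flux density $h$,
\begin{align*}
\varphi_h(v_{B_k(x)})=\kappa\,h(x)\left(\frac{1}{\lambda(B')}-\frac{1}{\lambda(B_k(x))}\right)\qquad\text{for }B_k(x)\subsetneq B',
\end{align*}
and $\varphi_h(v_{B_k(x)})=0$ otherwise. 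Hence the trace is exactly $\kappa\lambda(B')^{-1}h$. This uses only the form of $W$ and $\lambda(B_k(x))\to+\infty$. The ``main subtlety'' you describe does not exist, and the hypothesis on $\kappa$ plays no role in the trace. This is precisely the paper's three-region computation $G_Wf_B=\lambda(B')^{-1}f_B$.

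Now sum over the finitely many modes of $g=\kappa^{-1}L_Cf$, with $f_{B'}$ denoting your Haar components. For large $k$ this gives $\varphi(v_{B_k(x)})=\sum_{B'\supsetneq B_k(x)}f_{B'}(x)\bigl(1-\lambda(B')/\lambda(B_k(x))\bigr)$, i.e.\ $f(x)-\varphi(v_{B_k(x)})=\kappa g(x)/\lambda(B_k(x))$. The correction comes from the coarser modes, not from modes inside $B_k$ as in your expected formula. $L^2$ convergence then follows on a ball containing $\operatorname{supp}g$ by Dini or dominated convergence, as in the paper. Note also that your uniqueness step uses the eigenvalue $\kappa/\lambda(B')$, which contradicts your own Step 1 as written.

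Step 2 contains an algebra error as well. We have $W(B_k,B_{k-1})/m(B_k)=\kappa^{-1}\lambda(B_k)\lambda(B_{k-1})/C(B_k)$ and $m(B_j)/W(B_j,B_{j-1})=\kappa\, C(B_j)/(\lambda(B_j)\lambda(B_{j-1}))$. So your factor $C(B_k)/\lambda(B_k)$ should be $1/C(B_k)$, and the coefficient is
\begin{align*}
\frac{\lambda(B_k)\lambda(B_{k-1})}{C(B_k)}\cdot\frac{1}{\lambda(B_{k-1})}=\frac{\lambda(B_k)}{C(B_k)}\longrightarrow\kappa .
\end{align*}
Your stated simplification would give $g(x)$ and lose the factor $\kappa$. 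This is the only place the hypothesis $\lambda/C\to\kappa$ enters. Its local uniformity should be applied on a compact ball containing $\operatorname{supp}g$, not merely $\operatorname{supp}f$, since $L_Cf$ can be nonzero off $\operatorname{supp}f$. With these two corrections your proof coincides with the paper's, and Step 3 goes through as you describe.
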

\section{Preliminaries}
\subsection{\texorpdfstring{The $p$-adic field and the Fourier transform}{The p-adic field and the Fourier transform}}
Let $p$ be a prime. Every nonzero $x\in\mathbb{Q}$ can be written as $x=p^{v}\frac{a}{b}$, where $v\in\mathbb{Z}$ and $a,b\in\mathbb{Z}$ satisfy $p\nmid ab$. Define $|x|_p=p^{-v}$ and set $|0|_p=0$. The completion of $\mathbb{Q}$ with respect to this absolute value is the $p$-adic field $\mathbb{Q}_p$. The absolute value extends uniquely to $\mathbb{Q}_p$ and satisfies the strong triangle inequality
\begin{align*}
    |x+y|_p\leq\max\{|x|_p,|y|_p\}.
\end{align*}
Let $v_p(x)=-\log_p|x|_{p}$ for $x\neq0$ and define $v_p(0)=+\infty$. Then each nonzero $x\in\mathbb{Q}_p$ has a unique $p$-adic expansion
\begin{align*}
    x=\sum_{k=v_p(x)}^{+\infty}a_kp^k,
\end{align*}
where $a_{v_p(x)}\in\mathbb{F}_p^{\times}$ and $a_k\in\mathbb{F}_p$ for $k>v_p(x)$; here we use the usual representatives of $\mathbb{F}_p$ in $\{0,\ldots,p-1\}$.

The ring of $p$-adic integers $\mathbb{Z}_p$ is defined by
\begin{align*}
    \mathbb{Z}_p
    :=\{x\in\mathbb{Q}_p\mid v_p(x)\geq0\}
    =\{x\in\mathbb{Q}_p\mid |x|_p\leq1\}.
\end{align*}
The group of units of $\mathbb{Z}_p$ is denoted by
\begin{align*}
    \mathbb{Z}_p^*
    =\{x\in\mathbb{Q}_p\mid |x|_p=1\}
    =\mathbb{Z}_p\setminus p\mathbb{Z}_p.
\end{align*}
Let $\chi_p(x)=e^{2\pi i\{x\}_p}$ be the standard additive character on $\mathbb{Q}_p$, where $\{\cdot\}_p$ is the $p$-adic fractional part defined by
\begin{align*}
    \{x\}_p=
    \begin{cases}
        0,&v_p(x)\geq0,\\
        \displaystyle\sum_{k=v_p(x)}^{-1}a_kp^k,&v_p(x)<0.
    \end{cases}
\end{align*}
For any $f\in L^1(\mathbb{Q}_p)$, the Fourier transform is defined by
\begin{align*}
    \hat{f}(\xi)=\int_{\mathbb{Q}_p}f(x)\chi_p(x\xi)dx,
\end{align*}
where $dx$ is the Haar measure on $\mathbb{Q}_p$ normalized by
$|\mathbb{Z}_p|=1$.

The $n$-dimensional vector space $\mathbb{Q}_p^n$ is equipped with the supremum norm, defined for
$\boldsymbol{x}=(x_1,\dots,x_n)\in\mathbb{Q}_p^n$ by
\begin{align*}
    \|\boldsymbol{x}\|_p=\max_{1\leq i\leq n}\{|x_i|_p\},
\end{align*}
which also satisfies the strong triangle inequality
\begin{align*}
    \|\boldsymbol{x}+\boldsymbol{y}\|_p
    \leq\max\{\|\boldsymbol{x}\|_p,\|\boldsymbol{y}\|_p\}.
\end{align*}
The space $\mathbb{Q}_p^n$ is complete with respect to the norm $\|\cdot\|_p$. For later use, we set
\begin{align*}
    v_p(\boldsymbol{x})=\min_{1\leq i\leq n}v_p(x_i),
    \qquad v_p(\boldsymbol{0})=+\infty.
\end{align*}
Thus $\|\boldsymbol{x}\|_p=p^{-v_p(\boldsymbol{x})}$ for $\boldsymbol{x}\neq\boldsymbol{0}$.

Let $\boldsymbol{x}\cdot\boldsymbol{\xi} =\displaystyle\sum_{i=1}^{n}x_i\xi_i$. We use the product Haar measure $d\boldsymbol{x}$ on $\mathbb{Q}_p^n$, normalized by $|\mathbb{Z}_p^n|=1$. For every $f\in L^1(\mathbb{Q}_p^n)$, its Fourier transform is defined by
\begin{align*}
    \hat{f}(\boldsymbol{\xi})
    =\int_{\mathbb{Q}_p^n}f(\boldsymbol{x})
      \chi_p(\boldsymbol{x}\cdot\boldsymbol{\xi})d\boldsymbol{x}.
\end{align*}
With this convention, the inverse Fourier transform is
\begin{align*}
    f(\boldsymbol{x})
    =\int_{\mathbb{Q}_p^n}\hat{f}(\boldsymbol{\xi})
      \chi_p(-\boldsymbol{x}\cdot\boldsymbol{\xi})
      d\boldsymbol{\xi},
\end{align*}
whenever the classical inversion formula applies. The Bruhat--Schwartz space $\mathcal{S}(\mathbb{Q}_p^n)$ consists of locally constant, compactly supported complex-valued functions. The Fourier transform maps $\mathcal{S}(\mathbb{Q}_p^n)$ onto itself and extends to a unitary operator on $L^2(\mathbb{Q}_p^n)$. In particular, Plancherel's identity takes the form
\begin{align*}
    \int_{\mathbb{Q}_p^n}f(\boldsymbol{x})
      \overline{g(\boldsymbol{x})}d\boldsymbol{x}
    =\int_{\mathbb{Q}_p^n}\hat f(\boldsymbol{\xi})
      \overline{\hat g(\boldsymbol{\xi})}d\boldsymbol{\xi}.
\end{align*}

\subsection{The Vladimirov--Taibleson operator and its spectrum}
\begin{definition}
    Let $\alpha>0$. The Vladimirov--Taibleson operator $D^\alpha$ is the nonnegative self-adjoint operator on $L^2(\mathbb{Q}_p^n)$ defined by
    \begin{align*}
        \widehat{D^\alpha f}(\boldsymbol{\xi})
        =\|\boldsymbol{\xi}\|_p^{\alpha}\hat{f}(\boldsymbol{\xi}),
    \end{align*}
    with operator domain
    \begin{align*}
        \mathcal{D}(D^\alpha)
        =\left\{f\in L^2(\mathbb{Q}_p^n)\ \middle|\
        \|\boldsymbol{\xi}\|_p^\alpha\hat f(\boldsymbol{\xi})
        \in L^2(\mathbb{Q}_p^n)\right\}.
    \end{align*}
\end{definition}
\begin{remark}
    For $f\in \mathcal{S}(\mathbb{Q}_p^n)$, the preceding Fourier-multiplier definition is equivalently given by the absolutely convergent singular integral
    \begin{align*}
        D^\alpha f(\boldsymbol{x})
        =\frac{1-p^{\alpha}}{1-p^{-\alpha-n}}
        \int_{\mathbb{Q}_p^n}
        \frac{f(\boldsymbol{y})-f(\boldsymbol{x})}
        {\|\boldsymbol{y}-\boldsymbol{x}\|_p^{\alpha+n}}
        d\boldsymbol{y}.
    \end{align*}
\end{remark}
\begin{remark}
    The Vladimirov--Taibleson operator may be regarded as the non-Archimedean counterpart of the fractional Laplacian $(-\Delta)^{\frac{\alpha}{2}}$ in $\mathbb{R}^n$, whose Fourier symbol is $\|\boldsymbol{\xi}\|^\alpha$.
\end{remark}
\begin{theorem}
    The functions
    \begin{align*}
        \Psi_{\gamma,\boldsymbol{a},\boldsymbol{j}}(\boldsymbol{x})
        =p^{\frac{n(\gamma-1)}{2}}
        \chi_p\!\left(p^{-\gamma}\boldsymbol{j}\cdot
        (\boldsymbol{x}-\boldsymbol{a})\right)
        \boldsymbol{1}_{\boldsymbol{a}
        +p^{\gamma-1}\mathbb{Z}_p^n}(\boldsymbol{x}),
    \end{align*}
    where $\gamma\in\mathbb{Z}$, $\boldsymbol{a}$ ranges over a fixed set of representatives of $\mathbb{Q}_p^n/p^{\gamma-1}\mathbb{Z}_p^n$, and $\boldsymbol{j}\in\mathbb{F}_p^n\setminus\{\boldsymbol{0}\}$, form a complete orthonormal basis of $L^2(\mathbb{Q}_p^n)$ and satisfy
    \begin{align*}
        D^\alpha\Psi_{\gamma,\boldsymbol{a},\boldsymbol{j}}
        =p^{\gamma\alpha}
        \Psi_{\gamma,\boldsymbol{a},\boldsymbol{j}}.
    \end{align*}
    Consequently,
    \begin{align*}
        \operatorname{Spec}(D^\alpha)
        =\{0\}\cup\{p^{\gamma\alpha}\mid\gamma\in\mathbb{Z}\}.
    \end{align*}
    Every positive eigenvalue has infinite multiplicity. The point $0$ belongs to the spectrum as an accumulation point, but it is not an $L^2$-eigenvalue.
\end{theorem}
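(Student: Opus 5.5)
The plan is to compute the Fourier transform of each $\Psi_{\gamma,\boldsymbol{a},\boldsymbol{j}}$ and to locate its support exactly on the sphere $\|\boldsymbol{\xi}\|_p=p^{\gamma}$. The eigenvalue relation $D^\alpha\Psi=p^{\gamma\alpha}\Psi$ then follows at once from the definition of $D^\alpha$ as a Fourier multiplier.

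\textbf{Step 1: the Fourier transform.}
\begin{itemize}
\item Translate by $\boldsymbol{a}$ and dilate by $p^{\gamma-1}$. This reduces the computation to the integral of $\chi_p(p^{-1}\boldsymbol{j}\cdot\boldsymbol{y}+p^{\gamma-1}\boldsymbol{y}\cdot\boldsymbol{\xi})$ over $\boldsymbol{y}\in\mathbb{Z}_p^n$.
\item By the standard orthogonality $\int_{\mathbb{Z}_p^n}\chi_p(\boldsymbol{\eta}\cdot\boldsymbol{y})d\boldsymbol{y}=\boldsymbol{1}_{\mathbb{Z}_p^n}(\boldsymbol{\eta})$, the integral is nonzero exactly when $p^{\gamma-1}\boldsymbol{\xi}+p^{-1}\boldsymbol{j}\in\mathbb{Z}_p^n$.
\item Hence $\hat\Psi$ is a unimodular phase times $p^{-n(\gamma-1)/2}$ times the indicator of $-p^{-\gamma}\boldsymbol{j}+p^{1-\gamma}\mathbb{Z}_p^n$.
\item Since $\boldsymbol{j}\neq\boldsymbol{0}$ in $\mathbb{F}_p^n$, the strong triangle inequality gives $\|\boldsymbol{\xi}\|_p=p^{\gamma}$ on this ball. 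So $\|\boldsymbol{\xi}\|_p^\alpha\hat\Psi=p^{\gamma\alpha}\hat\Psi$.
\end{itemize}

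\textbf{Step 2: orthonormality.}
\begin{itemize}
\item The normalization follows from $|\boldsymbol{a}+p^{\gamma-1}\mathbb{Z}_p^n|=p^{-n(\gamma-1)}$.
\item Work on the Fourier side via Plancherel. Two functions with different $\gamma$, or with different $(\boldsymbol{a},\boldsymbol{j})$ whose Fourier supports are disjoint balls, are clearly orthogonal.
\item Two functions with the same $\gamma$ and $\boldsymbol{j}$ but different $\boldsymbol{a}$ have disjoint spatial supports, so they are also orthogonal.
\end{itemize}

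\textbf{Step 3: completeness (the main obstacle).} I would argue through the Haar martingale structure.
\begin{itemize}
\item Let $E_k$ be the conditional expectation onto functions constant on cosets of $p^k\mathbb{Z}_p^n$.
\item For fixed $\boldsymbol{a}$, the $p^n-1$ functions $\Psi_{\gamma,\boldsymbol{a},\boldsymbol{j}}$ are the nontrivial characters of $(\boldsymbol{a}+p^{\gamma-1}\mathbb{Z}_p^n)/p^{\gamma}\mathbb{Z}_p^n\cong\mathbb{F}_p^n$, restricted to the ball.
\item They therefore span exactly the range of $(E_{\gamma}-E_{\gamma-1})$ restricted to that ball, a space of dimension $p^n-1$.
\item Consequently $\bigoplus_\gamma\operatorname{ran}(E_\gamma-E_{\gamma-1})$ is spanned by the family.
\end{itemize}
It remains to show $E_k f\to f$ as $k\to+\infty$ and $E_k f\to0$ as $k\to-\infty$ in $L^2$.
\begin{itemize}
\item The first holds on the dense class $\mathcal{S}(\mathbb{Q}_p^n)$, because $E_k f=f$ for large $k$.
\item For the second, on the Fourier side $E_k$ is multiplication by $\boldsymbol{1}_{\|\boldsymbol{\xi}\|_p\le p^{k}}$, which converges strongly to $0$ because $\{\boldsymbol{0}\}$ has Haar measure zero.
\end{itemize}

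\textbf{Step 4: the spectrum.}
\begin{itemize}
\item In the orthonormal basis, $D^\alpha$ is diagonal with eigenvalues $p^{\gamma\alpha}$, each of infinite multiplicity since $\boldsymbol{a}$ ranges over an infinite set.
\item The spectrum is the closure of $\{p^{\gamma\alpha}\mid\gamma\in\mathbb{Z}\}$, which adds only the accumulation point $0$ (as $\gamma\to-\infty$).
\item $0$ is not an eigenvalue: $D^\alpha f=0$ forces $\hat f$ to be supported on $\{\boldsymbol{0}\}$, a null set, so $f=0$.
\end{itemize}
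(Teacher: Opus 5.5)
Your proposal is correct, and its core agrees with the paper. The eigenvalue relation comes from the same computation $\operatorname{supp}\widehat{\Psi}_{\gamma,\boldsymbol{a},\boldsymbol{j}}=-p^{-\gamma}\boldsymbol{j}+p^{1-\gamma}\mathbb{Z}_p^n\subseteq\{\|\boldsymbol{\xi}\|_p=p^\gamma\}$. The absence of a zero eigenvalue comes from the same fact that $\{\boldsymbol{0}\}$ is Haar-null.

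You differ in the basis part.
\begin{itemize}
\item \textbf{Orthogonality.} The paper argues in physical space: a finite Fourier basis on the $p^n$ sub-balls, disjoint supports, and a coarser wavelet being constant on the support of a finer mean-zero one. You instead use disjoint Fourier supports. These are distinct shells for distinct $\gamma$, and disjoint balls for distinct $\boldsymbol{j}$, since the support depends only on $(\gamma,\boldsymbol{j})$. Spatial disjointness then handles distinct $\boldsymbol{a}$.
\item \textbf{Completeness.} The paper only asserts that the nested partitions yield completeness. You actually prove it via $f=\sum_\gamma(E_\gamma-E_{\gamma-1})f$.
\end{itemize}

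Your route adds rigor exactly where the paper is thin. Identifying $E_k$ with the multiplier $\boldsymbol{1}_{\{\|\boldsymbol{\xi}\|_p\le p^k\}}$ gives both limits at once by dominated convergence, so the density argument for $k\to+\infty$ is unnecessary. The limit $E_kf\to0$ as $k\to-\infty$ is precisely where $|\mathbb{Q}_p^n|=+\infty$ rules out a missing constant mode. It also shows that $E_\gamma-E_{\gamma-1}$ is the Fourier projection onto the shell $\{\|\boldsymbol{\xi}\|_p=p^\gamma\}$.

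The paper's physical-space argument has its own advantage: it transfers to the hierarchical Haar functions $f_B$, where no Fourier transform is available.

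For the spectrum, reading it as the closure of the eigenvalues of a diagonal self-adjoint operator is equivalent to reading it as the essential range of $\|\boldsymbol{\xi}\|_p^\alpha$. Your remark that each $\gamma$ carries infinitely many $\boldsymbol{a}$ supplies the infinite-multiplicity claim, which the paper states without comment.
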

\begin{proof}
    The support of each wavelet has measure $p^{-n(\gamma-1)}$, so the factor $p^{\frac{n(\gamma-1)}{2}}$ gives unit norm. On each fixed ball $\boldsymbol{a}+p^{\gamma-1}\mathbb{Z}_p^n$, the characters indexed by $\boldsymbol{j}\in\mathbb{F}_p^n$ form the finite Fourier basis on the set of $p^n$ maximal sub-balls. Omitting $\boldsymbol{j}=\boldsymbol{0}$ leaves the mean-zero subspace. Wavelets supported on disjoint balls are orthogonal, while wavelets at different scales are orthogonal because a coarser wavelet is constant on the support of a finer mean-zero wavelet. The nested partitions of $\mathbb{Q}_p^n$ into balls therefore show that these wavelets form a complete orthonormal basis of $L^2(\mathbb{Q}_p^n)$.

    With the Fourier convention fixed above, a direct change of variables gives
    \begin{align*}
        \operatorname{supp}
        \widehat{\Psi}_{\gamma,\boldsymbol{a},\boldsymbol{j}}
        =-p^{-\gamma}\boldsymbol{j}
        +p^{1-\gamma}\mathbb{Z}_p^n.
    \end{align*}
    Since $\boldsymbol{j}\neq\boldsymbol{0}$ in $\mathbb{F}_p^n$, one has $\|\boldsymbol{\xi}\|_p=p^\gamma$ throughout this support. The Fourier-multiplier definition of $D^\alpha$ now yields the stated eigenvalue.

    Finally, the spectrum of a multiplication operator is the essential range of its multiplier. In the present case, this range is $\{0\}\cup\{p^{\gamma\alpha}\mid\gamma\in\mathbb{Z}\}$. The singleton $\{\boldsymbol{0}\}$ has Haar measure zero, so the multiplier has trivial kernel in $L^2$. In particular, the constant function $1$ is not an $L^2$-eigenfunction because $|\mathbb{Q}_p^n|=+\infty$.
\end{proof}

\subsection{Bruhat--Tits trees and horocyclic coordinates}
The Bruhat--Tits tree
$\mathcal{T}_p=PGL(2,\mathbb{Q}_p)/PGL(2,\mathbb{Z}_p)$ is a $(p+1)$-regular tree whose boundary satisfies
\begin{align*}
    \partial \mathcal{T}_p
    \cong\mathbb{P}^1(\mathbb{Q}_p)
    =\mathbb{Q}_p\cup\{\infty\}.
\end{align*}
Fixing the point at infinity
$\infty\in\partial\mathcal{T}_p$ identifies
$\partial\mathcal{T}_p\setminus\{\infty\}$ with the $p$-adic field.
The tree $\mathcal{T}_p$ admits a natural stratification by its
horocycles, which are the level sets of a Busemann function.
We use the following horocyclic coordinates.
\begin{definition}[Horocyclic coordinates on $\mathcal{T}_p$]
    Each vertex $v\in \mathcal{T}_p$ is parameterized by a pair
    $(x,k)$, where
    \begin{enumerate}
        \item The scale height is $k\in\mathbb{Z}$. Along a ray to a
        point of $\mathbb{Q}_p$, the height tends to $+\infty$, whereas
        along the distinguished ray to $\infty$, it tends to
        $-\infty$.
        \item The horizontal position is
        $x\in\mathbb{Q}_p/p^k\mathbb{Z}_p$. For a fixed height $k$, the
        set of vertices
        $\{v=(x,k)\in\mathcal{T}_p\mid
        x\in\mathbb{Q}_p/p^k\mathbb{Z}_p\}$ forms a horizontal slice of
        the tree, and
        \begin{align*}
            \{y\in\partial \mathcal{T}_p\mid
            \text{there is a path }\infty\to(x,k)\to y\}
            =x+p^k\mathbb{Z}_p.
        \end{align*}
    \end{enumerate}
\end{definition}
In these coordinates, each vertex
$v=(x,k)\in\mathcal{T}_p$ has a unique upper neighbor $(x,k-1)$,
closer to $\infty$, and $p$ lower neighbors
$(x+ap^k,k+1)$ with $a\in\mathbb{F}_p$, in the direction of
$\mathbb{Q}_p$.

Let $\mathbb{Q}_{p^n}$ be the unramified extension of
$\mathbb{Q}_p$ of degree $n$, let $\mathbb{Z}_{p^n}$ be its ring of
integers, and normalize its absolute value by $|p|_{p^n}=p^{-1}$.
Consider the $(p^n+1)$-regular Bruhat--Tits tree
\begin{align*}
    \mathcal{T}_{p^n}
    =PGL(2,\mathbb{Q}_{p^n})/PGL(2,\mathbb{Z}_{p^n}),
\end{align*}
whose boundary satisfies
\begin{align*}
    \partial \mathcal{T}_{p^n}
    \cong\mathbb{P}^1(\mathbb{Q}_{p^n})
    =\mathbb{Q}_{p^n}\cup\{\infty\}.
\end{align*}
Fix a $\mathbb{Z}_p$-basis $e_1,\ldots,e_n$ of
$\mathbb{Z}_{p^n}$. The map
\begin{align*}
    (x_1,\ldots,x_n)\longmapsto\sum_{i=1}^n x_i e_i
\end{align*}
is a $\mathbb{Q}_p$-linear isometric isomorphism from
$(\mathbb{Q}_p^n,\|\cdot\|_p)$ onto $\mathbb{Q}_{p^n}$ endowed with
its normalized absolute value. It maps $\mathbb{Z}_p^n$ onto
$\mathbb{Z}_{p^n}$ and preserves normalized Haar measure. Indeed, for
a nonzero vector $(x_1,\ldots,x_n)$, after factoring out
$p^{\min_i v_p(x_i)}$, at least one coefficient is a unit, and the
reductions of $e_1,\ldots,e_n$ form an $\mathbb{F}_p$-basis of
$\mathbb{Z}_{p^n}/p\mathbb{Z}_{p^n}$; hence the remaining sum is a
unit. We use this identification throughout the paper. In particular,
the finite part of the boundary is identified with $\mathbb{Q}_p^n$:
\begin{align*}
    \partial\mathcal{T}_{p^n}\setminus\{\infty\}
    \cong\mathbb{Q}_{p^n}\cong\mathbb{Q}_p^n.
\end{align*}
The corresponding horocyclic coordinates on $\mathcal{T}_{p^n}$ are
defined as follows.
\begin{definition}[Horocyclic coordinates on $\mathcal{T}_{p^n}$]
    Each vertex $v\in \mathcal{T}_{p^n}$ is parameterized by a pair
    $(\boldsymbol{x},k)$, where
    \begin{enumerate}
        \item The scale height is $k\in\mathbb{Z}$. Along a ray to a
        point of $\mathbb{Q}_p^n$, the height tends to $+\infty$,
        whereas along the distinguished ray to $\infty$, it tends to
        $-\infty$.
        \item The horizontal position is
        $\boldsymbol{x}\in
        \mathbb{Q}_p^n/p^k\mathbb{Z}_p^n$. For a fixed height $k$, the
        set of vertices
        \begin{align*}
            \{v=(\boldsymbol{x},k)\in\mathcal{T}_{p^n}\mid
            \boldsymbol{x}\in
            \mathbb{Q}_p^n/p^k\mathbb{Z}_p^n\}
        \end{align*}
        forms a horizontal slice of the tree, and
        \begin{align*}
            \{\boldsymbol{y}\in
            \partial\mathcal{T}_{p^n}\setminus\{\infty\}\mid
            \text{there is a path }
            \infty\to(\boldsymbol{x},k)\to\boldsymbol{y}\}
            =\boldsymbol{x}+p^k\mathbb{Z}_p^n.
        \end{align*}
    \end{enumerate}
\end{definition}
\begin{figure}[H]
    \centering
    \includegraphics[width=1\textwidth]{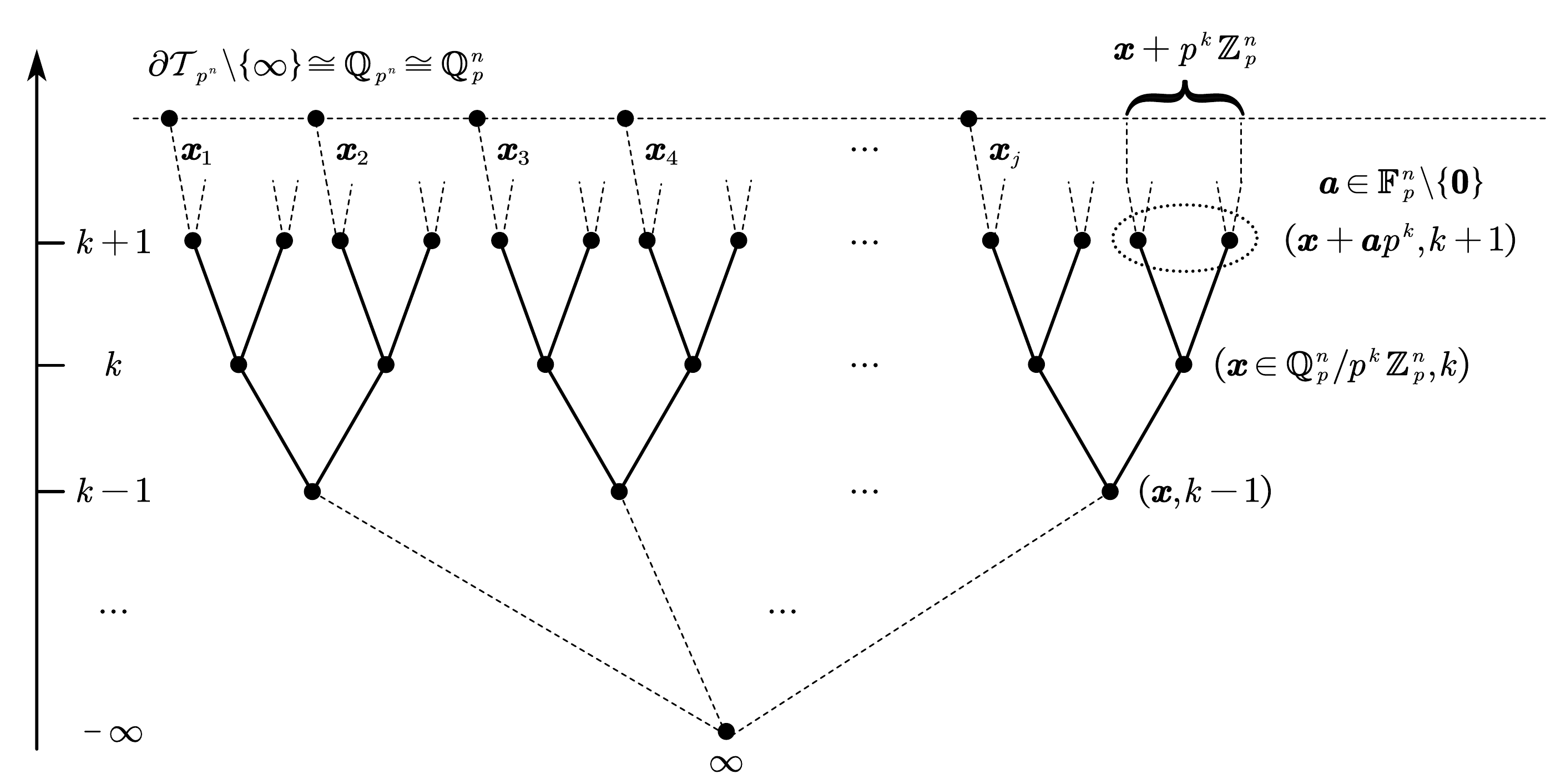}
    \caption{Horocyclic coordinates on $\mathcal{T}_{p^n}$}
    \label{fig:horocyclic coordinate}
\end{figure}
In these coordinates, each
$v=(\boldsymbol{x},k)\in\mathcal{T}_{p^n}$ has a unique upper neighbor
$(\boldsymbol{x},k-1)$, closer to $\infty$, and $p^n$ lower neighbors
$(\boldsymbol{x}+\boldsymbol{a}p^k,k+1)$ with
$\boldsymbol{a}\in\mathbb{F}_p^n$, in the direction of
$\mathbb{Q}_p^n$.

Equivalently, each ball
$\boldsymbol{x}+p^k\mathbb{Z}_p^n\subseteq\mathbb{Q}_p^n$ has
\begin{align*}
    [p^k\mathbb{Z}_p^n:p^{k+1}\mathbb{Z}_p^n]
    =|\mathbb{Z}_p^n/p\mathbb{Z}_p^n|
    =|\mathbb{F}_p^n|=p^n
\end{align*}
maximal proper sub-balls of radius $p^{-(k+1)}$. Since two balls in
$\mathbb{Q}_p^n$ are either disjoint or nested, each ball is uniquely
contained in a super-ball of radius $p^{-(k-1)}$. We therefore regard
each ball $\boldsymbol{x}+p^k\mathbb{Z}_p^n$ as a vertex
$v=(\boldsymbol{x},k)$ connected to $p^n+1$ vertices, corresponding to
its $p^n$ maximal proper sub-balls and its unique immediate super-ball.

\subsection{Ultrametric spaces and the associated ultrametric tree}
\begin{definition}[Ultrametric space]
    Let $(X,d)$ be a metric space. If the strong triangle inequality
    \begin{align*}
        d(x,y)\leq\max\{d(x,z),d(y,z)\}
    \end{align*}
    holds for all $x,y,z\in X$, then $(X,d)$ is called an ultrametric
    space.
\end{definition}

Throughout the hierarchical part of this paper, we assume that $(X,d)$
is a noncompact, locally compact, separable, perfect ultrametric space.
Any two balls in an ultrametric space are either disjoint or one is
contained in the other. We further assume that $\mathcal{B}$ is a
countable basis consisting of nonempty compact open balls.

\begin{definition}[Radon measures and balls]
    Let $m$ be a non-atomic Borel Radon measure of full support on
    $(X,d)$. We assume that
    \begin{align*}
        m(X)=+\infty,\qquad
        0<m(B)<+\infty\quad\text{for every }B\in\mathcal{B}.
    \end{align*}
\end{definition}

\begin{definition}[Sub-balls and super-balls]
    We assume that every ball $B\in\mathcal{B}$ has a unique immediate
    strict super-ball $B'\in\mathcal{B}$. We call $B'$ the immediate
    super-ball of $B$ and call $B$ a sub-ball of $B'$; we write
    $B\in\text{sub}(B')$. We also assume that every
    $B'\in\mathcal{B}$ is the disjoint union of finitely many
    (at least two) maximal proper sub-balls, whose collection is denoted
    by $\text{sub}(B')$.
\end{definition}

We impose the following additional standing assumptions on the ball
structure. Any two balls in $\mathcal{B}$ are contained in a common ball
of $\mathcal{B}$, which can be reached from either one by finitely many
iterations of $B\mapsto B'$. For every $B\in\mathcal{B}$, the union of
its successive super-balls is $X$. Along every infinite strictly
descending chain formed by successively choosing sub-balls, the
diameters tend to zero.
Consequently, for each $x\in X$, the chain of balls containing $x$ can
be indexed as
\begin{align*}
    \cdots\supseteq B_{k-1}\supseteq B_k
    \supseteq B_{k+1}\supseteq\cdots\ni x,
    \qquad B_k'=B_{k-1},
\end{align*}
with
\begin{align*}
    \bigcap_{k\in\mathbb{Z}}B_k=\{x\},\qquad
    \bigcup_{k\in\mathbb{Z}}B_k=X,\qquad
    \lim_{k\to+\infty}\operatorname{diam}(B_k)=0.
\end{align*}

\begin{definition}[Associated ultrametric tree]
    The vertex set is $\mathcal{B}$, and each
    $B\in\mathcal{B}$ is connected to its immediate super-ball $B'$ by an
    undirected edge. The resulting connected, locally finite, acyclic
    graph is called the ultrametric tree associated with $X$, denoted by
    $\mathcal{T}_X$. Its descending ends are identified with the points
    of $X$, while its common ascending end is denoted by $\infty$.
    Accordingly,
    \begin{align*}
        \partial\mathcal{T}_X\cong X\cup\{\infty\}.
    \end{align*}
    Changing the reference vertex changes the Busemann height only by an
    additive constant. We write $h(B)=h(v_B)\in\mathbb{Z}$ for the
    Busemann height relative to $\infty$, normalized by
    \begin{align*}
        h(B)-h(B')=1.
    \end{align*}
    All truncations below are defined in terms of this Busemann height. We
    choose the indexing of each boundary chain so that $h(B_k(x))=k$. For each
    $k\in\mathbb Z$, the balls of height $k$ are pairwise disjoint and
    cover $X$.

    For each boundary point $x\in X$, the geodesic joining the ends
    $\infty$ and $x$ corresponds to the bi-infinite chain
    \begin{align*}
        \cdots\supseteq B_{k-1}\supseteq B_k
        \supseteq B_{k+1}\supseteq\cdots\ni x.
    \end{align*}
    Since $B_k\downarrow\{x\}$ as $k\to+\infty$, non-atomicity and
    continuity of $m$ from above give
    \begin{align*}
        \lim_{k\to+\infty}m(B_k)=0.
    \end{align*}
\end{definition}

\subsection{Hierarchical Laplacians on ultrametric spaces}
\begin{definition}[Bruhat--Schwartz space]
    The Bruhat--Schwartz space $\mathcal{S}(X)$ is the set of locally constant
    functions with compact support. Its zero-mean subspace is defined by
    \begin{align*}
        \mathcal{S}_0(X)=\left\{f\in \mathcal{S}(X)\ \middle|\
        \int_Xf(x)dm(x)=0\right\}.
    \end{align*}
\end{definition}
\begin{remark}
    Fix a ball $B\in\mathcal{B}$. Then we have the algebraic direct sum
    \begin{align*}
        \mathcal{S}(X)=\mathcal{S}_0(X)\oplus\boldsymbol{1}_B\mathbb{R}.
    \end{align*}
    Indeed, for every $f\in \mathcal{S}(X)$,
    \begin{align*}
        f(x)
        =\left(f(x)-\frac{\int_Xf(y)dm(y)}{m(B)}\boldsymbol{1}_B(x)\right)+\frac{\int_Xf(y)dm(y)}{m(B)}\boldsymbol{1}_B(x),
    \end{align*}
    where the first term on the right-hand side belongs to $\mathcal{S}_0(X)$
    and the second belongs to $\boldsymbol{1}_B\mathbb{R}$. The
    intersection of these two subspaces is $\{0\}$ because $m(B)>0$.
\end{remark}

\begin{definition}[Hierarchical Laplacian]
    Let $C:\mathcal{B}\to(0,+\infty)$ be a strictly positive choice function
    such that, for every $B\in\mathcal{B}$,
    \begin{align*}
        \sum_{\substack{D\in\mathcal{B}\\D\supseteq B}}C(D)<+\infty,
    \end{align*}
    and, for every $x\in X$, along the descending chain
    $B_k(x)\ni x$ described above,
    \begin{align*}
        \sum_{\substack{D\in\mathcal{B}\\D\supseteq B_k}}C(D)
        \longrightarrow+\infty
        \qquad(k\to+\infty).
    \end{align*}

    For $f\in \mathcal{S}(X)$, the hierarchical Laplacian $L_Cf(x)$ is
    defined pointwise by the series
    \begin{align*}
        L_Cf(x)
        =\sum_{\substack{B\in\mathcal{B}\\B\ni x}}
        C(B)\left(f(x)-\frac{1}{m(B)}
        \int_Bf(y)dm(y)\right).
    \end{align*}
\end{definition}
The series is well-defined on $\mathcal{S}(X)$. Indeed, the terms corresponding
to sufficiently small balls vanish because $f$ is locally constant,
while the remaining ancestor tail converges because
$\sum\limits_{D\supseteq B}C(D)<+\infty$.

\begin{remark}
    For $x\ne y$, let $B_{x,y}$ be the smallest ball containing $x$ and
    $y$. Then
    \begin{align*}
        L_Cf(x)
        =&\sum_{\substack{B\in\mathcal{B}\\B\ni x}}
        \frac{C(B)}{m(B)}
        \int_B(f(x)-f(y))dm(y)\\
        =&\int_X\left(
        \sum_{\substack{B\in\mathcal{B}\\B\ni x}}
        \frac{C(B)}{m(B)}\boldsymbol{1}_B(y)\right)
        (f(x)-f(y))dm(y)\\
        =&\int_X
        \sum_{\substack{B\in\mathcal{B}\\B\supseteq B_{x,y}}}
        \frac{C(B)}{m(B)}(f(x)-f(y))dm(y).
    \end{align*}
    Thus,
    \begin{align*}
        K(x,y)
        =\sum_{\substack{B\in\mathcal{B}\\B\supseteq B_{x,y}}}
        \frac{C(B)}{m(B)}
    \end{align*}
    depends on the smallest common ball $B_{x,y}$. The diagonal value
    $K(x,x)$ may be assigned arbitrarily because the kernel is multiplied
    by $f(x)-f(y)$. In general, $K(x,y)$ is not determined by
    $d(x,y)$ alone. It is determined by $d(x,y)$ under an additional
    homogeneity assumption; for example,
    this holds if a measure-preserving isometry group acts transitively
    on pairs of points at each fixed distance and satisfies
    $C(\tau(B))=C(B)$ for every ball $B$ and every isometry $\tau$ in
    the group.
\end{remark}
\begin{remark}
    The operator $L_C$ is a symmetric, nonnegative jump-type operator
    whose jump kernel is determined by the hierarchical structure. Under
    the additional homogeneity assumption above, it is isotropic with
    respect to $d$. For $f\in \mathcal{S}(X)$, its quadratic form is
    \begin{align*}
        \langle f,L_Cf\rangle_{L^2(X,m)}
        =\frac{1}{2}\int_X\int_XK(x,y)
        |f(x)-f(y)|^2dm(x)dm(y)\geq0.
    \end{align*}
\end{remark}
\begin{remark}
    Let $X=\mathbb{Q}_p^n$, equip $X$ with the metric
    \begin{align*}
        d(\boldsymbol{x},\boldsymbol{y})
        =\|\boldsymbol{x}-\boldsymbol{y}\|_p,
    \end{align*}
    and let $m$ be the Haar measure $d\boldsymbol{x}$. Define
    \begin{align*}
        C(\boldsymbol{x}+p^{k}\mathbb{Z}_p^n)
        =p^{(k+1)s}(1-p^{-s}),
        \qquad s>0.
    \end{align*}
    If
    $\|\boldsymbol{x}-\boldsymbol{y}\|_p=p^{-k}$, then
    $B_{\boldsymbol{x},\boldsymbol{y}}
    =\boldsymbol{x}+p^k\mathbb{Z}_p^n$, and hence
    \begin{align*}
        K(\boldsymbol{x},\boldsymbol{y})
        =&\sum_{j=-\infty}^{k}
        \frac{p^{(j+1)s}(1-p^{-s})}{p^{-nj}}\\
        =&\frac{p^s-1}{1-p^{-s-n}}
        \|\boldsymbol{x}-\boldsymbol{y}\|_p^{-s-n}.
    \end{align*}
    Comparing this expression with the singular integral for $D^s$
    shows that $L_C=D^s$ on $\mathcal{S}(\mathbb{Q}_p^n)$.
\end{remark}
\begin{definition}[Eigenvalue scale function]
    For $B\in\mathcal{B}$, define the scale function
    $\lambda(B)$ by
    \begin{align*}
        \lambda(B)
        =\sum_{\substack{D\in\mathcal{B}\\D\supseteq B}}C(D).
    \end{align*}
    By the preceding assumptions, $0<\lambda(B)<+\infty$ and
    $\lambda(B_k)\to+\infty$ as $k\to+\infty$ along every boundary
    chain.
\end{definition}
Since $B'$ is the immediate super-ball of $B$, we have
\begin{align*}
    C(B)=\lambda(B)-\lambda(B').
\end{align*}
\begin{lemma}[Eigenfunctions of $L_C$]
\label{lem:Eigenfunctions_of_LC}
    For any ball $B\in\mathcal{B}$, define the piecewise constant
    function
    \begin{align*}
        f_B(x)=
        \begin{cases}
            \dfrac{1}{m(B)}-\dfrac{1}{m(B')},&x\in B,\\
            -\dfrac{1}{m(B')},&x\in B'\setminus B,\\
            0,&x\notin B'.
        \end{cases}
    \end{align*}
    Then
    \begin{align*}
        L_Cf_B(x)=\lambda(B')f_B(x),
        \qquad x\in X.
    \end{align*}
\end{lemma}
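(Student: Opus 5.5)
The plan is to evaluate $L_Cf_B(x)$ term by term from the defining series
\begin{align*}
L_Cf_B(x)=\sum_{D\ni x}C(D)\Bigl(f_B(x)-\tfrac{1}{m(D)}\textstyle\int_D f_B\,dm\Bigr),
\end{align*}
computing the average of $f_B$ over every ball $D\ni x$. The whole argument rests on two elementary observations.

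\textbf{Averages of $f_B$.} First, $f_B$ has total integral zero:
\begin{align*}
\int_X f_B\,dm=m(B)\Bigl(\tfrac1{m(B)}-\tfrac1{m(B')}\Bigr)-\tfrac{m(B')-m(B)}{m(B')}=0 .
\end{align*}
Hence $\frac{1}{m(D)}\int_D f_B\,dm=0$ for every ball $D\supseteq B'$. Second, every ball $D\ni x$ is comparable with $B'$, because balls are nested or disjoint. So either $D\supseteq B'$, or $D\subsetneq B'$; in the latter case $D$ is contained in some maximal sub-ball of $B'$, which is either $B$ or disjoint from $B$. Consequently $f_B$ is constant on $D$ and the term $f_B(x)-\frac{1}{m(D)}\int_D f_B$ vanishes. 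The same reasoning covers $x\notin B'$: any ball $D\ni x$ is either disjoint from $B'$ or contains $B'$, and in both cases the average is $0$ and $f_B(x)=0$.

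\textbf{Assembly.} Only the balls $D\supseteq B'$ contribute, and for these the bracket equals $f_B(x)-0=f_B(x)$. Such balls contain $x$ exactly when $x\in B'$. Therefore, for $x\in B'$,
\begin{align*}
L_Cf_B(x)=f_B(x)\sum_{D\supseteq B'}C(D)=\lambda(B')f_B(x),
\end{align*}
and for $x\notin B'$ both sides are zero.

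The main point to check is that the case analysis is exhaustive. This uses the standing assumptions that the ancestors of $B'$ form a chain $B',B'',\dots$ (the unique immediate super-ball property) and that any ball strictly inside $B'$ lies in one of its maximal sub-balls. Convergence of the series is immediate, since the tail sum equals $\lambda(B')<\infty$ and all other terms are zero.
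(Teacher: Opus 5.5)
Your proposal is correct and follows essentially the same route as the paper: $f_B$ has zero integral and is supported in $B'$, so each ball $D\supseteq B'$ contributes $C(D)f_B(x)$, while each ball $D\ni x$ strictly inside $B'$ lies in a maximal sub-ball on which $f_B$ is constant and contributes nothing. The only difference is that you treat $x\in B$ and $x\in B'\setminus B$ together in one argument, where the paper handles them as separate Cases 2 and 3.
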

\begin{proof}
    Direct computation gives
    \begin{align*}
        \int_Xf_B(x)dm(x)=0.
    \end{align*}
    We distinguish three cases according to the location of $x$.
    \begin{enumerate}
        \item[]\textbf{Case 1.} Suppose $x\notin B'$. Then
        $f_B(x)=0$. If a ball $D\ni x$ is disjoint from $B'$, then
        $f_B|_D=0$. If $D$ intersects $B'$, the ultrametric nesting
        property and $x\notin B'$ imply $D\supseteq B'$, and hence
        \begin{align*}
            \int_Df_B(y)dm(y)=\int_{B'}f_B(y)dm(y)=0.
        \end{align*}
        Thus every term in $L_Cf_B(x)$ vanishes.

        \item[]\textbf{Case 2.} Suppose $x\in B$. If $D\ni x$ and
        $D\subseteq B$, then $f_B$ is constant on $D$, so
        \begin{align*}
            f_B(x)-\frac{1}{m(D)}\int_Df_B(y)dm(y)=0.
        \end{align*}
        Since $B'$ is the immediate super-ball of $B$, every remaining
        ball $D\ni x$ contains $B'$. For such $D$,
        $\int_Df_Bdm=0$, so the corresponding summand is $C(D)f_B(x)$.
        Therefore,
        \begin{align*}
            L_Cf_B(x)
            =\sum_{\substack{D\in\mathcal{B}\\D\supseteq B'}}
            C(D)f_B(x)
            =\lambda(B')f_B(x).
        \end{align*}

        \item[]\textbf{Case 3.} Suppose $x\in B'\setminus B$. Then
        $x$ belongs to a maximal sub-ball of $B'$ different from $B$.
        On every ball $D$ contained in that sub-ball, $f_B$ is
        constant and the corresponding term vanishes. Every remaining
        ball $D\ni x$ contains $B'$, so $\int_Df_Bdm=0$. Hence
        \begin{align*}
            L_Cf_B(x)
            =\sum_{\substack{D\in\mathcal{B}\\D\supseteq B'}}
            C(D)f_B(x)
            =\lambda(B')f_B(x).
        \end{align*}
    \end{enumerate}
\end{proof}
\begin{remark}
    For each ball $B'\in\mathcal{B}$, let
    \begin{align*}
        W_{B'}=\{f_B\mid B\in\text{sub}(B')\}.
    \end{align*}
    This is a linearly dependent family of eigenfunctions, all with
    eigenvalue $\lambda(B')$. It satisfies
    \begin{align*}
        \sum_{B\in\text{sub}(B')}m(B)f_B=0.
    \end{align*}
    This is its only linear relation up to a scalar multiple. Indeed, if
    $\sum\limits_{B\in\text{sub}(B')}c_Bf_B=0$ and
    $S=\sum\limits_{B\in\text{sub}(B')}c_B$, then restricting the identity to a fixed sub-ball
    $A\in\text{sub}(B')$ gives
    \begin{align*}
        \frac{c_A}{m(A)}-\frac{S}{m(B')}=0,
    \end{align*}
    so $c_A$ is proportional to $m(A)$. Consequently,
    \begin{align*}
        \dim\operatorname{span}W_{B'}
        =\#W_{B'}-1.
    \end{align*}

    The spaces $\operatorname{span}W_{B'}$ corresponding to distinct
    balls $B'$ are mutually orthogonal. Indeed, for functions belonging
    to two such spaces, the corresponding supporting balls are either
    disjoint or nested. In the latter case, the function supported on the
    larger ball is constant on the maximal sub-ball containing the smaller
    ball, whereas the function supported on the smaller ball has integral
    zero.

    Finally, these spaces algebraically span $\mathcal{S}_0(X)$. To see this, let
    $f\in \mathcal{S}_0(X)$ and choose a ball containing
    $\operatorname{supp}f$. By compactness of the support and local
    constancy of $f$, this ball admits a finite refinement into sub-balls
    such that $f$ is constant on each terminal sub-ball. Subtracting
    successive averages as one passes from the chosen ball to the terminal
    sub-balls expresses $f$ as a finite sum of elements of
    $\operatorname{span}W_{B'}$. Thus, after deleting one element from
    each $W_{B'}$, the remaining functions form an algebraic
    eigenbasis of $\mathcal{S}_0(X)$. We fix such a choice whenever an expansion
    in terms of the functions $f_B$ is used below; the full family
    $\{f_B\}_{B\in\mathcal{B}}$ is not an orthonormal basis.
\end{remark}
\section{The extension method and the deformed normal derivative}

\subsection{The energy functional and its Euler--Lagrange equation}
Let $\varphi$ be a function on $\mathcal{T}_{p^n}$. In horocyclic coordinates, write $\varphi_k(\boldsymbol{x})=\varphi(\boldsymbol{x},k)$. We may then regard $\varphi$ as a sequence $\{\varphi_k\}_{k\in\mathbb{Z}}$ of locally constant functions on $\mathbb{Q}_p^n$, with $\varphi_k$ constant on every ball $\boldsymbol{x}_0+p^k\mathbb{Z}_p^n$. If $d\delta$ denotes counting measure on $\mathbb{Q}_p^n/p^k\mathbb{Z}_p^n$, then
\begin{equation}
\label{eq:Qpn_to_Qpn/pkZpn}
 \begin{aligned}
    \int_{\mathbb{Q}_p^n}|\varphi_k(\boldsymbol{x})-\varphi_{k-1}(\boldsymbol{x})|^2d\boldsymbol{x}
    =&|p^k\mathbb{Z}_p^n|\int_{\mathbb{Q}_p^n/p^k\mathbb{Z}_p^n}|\varphi_k(\boldsymbol{x})-\varphi_{k-1}(\boldsymbol{x})|^2d\delta\\
    =&p^{-nk}\int_{\mathbb{Q}_p^n/p^k\mathbb{Z}_p^n}|\varphi_k(\boldsymbol{x})-\varphi_{k-1}(\boldsymbol{x})|^2d\delta.
\end{aligned}
\end{equation}
We assign to the edge joining $(\boldsymbol{x},k-1)$ and $(\boldsymbol{x},k)$ the weight
\begin{align*}
w_{(\boldsymbol{x},k-1),(\boldsymbol{x},k)}=p^{k(s-n)}.
\end{align*}
By \eqref{eq:Qpn_to_Qpn/pkZpn}, the global Dirichlet energy functional on $\mathcal{T}_{p^n}$ can be written as
\begin{equation}
\label{eq:energy functional}
\begin{aligned}
\mathcal{E}[\varphi]
=&\frac{1}{2}\sum_{u\in \mathcal{T}_{p^n}}\sum_{v\sim u}w_{uv}|\varphi(u)-\varphi(v)|^2\\
=&\sum_{k\in\mathbb{Z}}p^{-k(n-s)}\int_{\mathbb{Q}_p^n/p^k\mathbb{Z}_p^n}|\varphi_k(\boldsymbol{x})-\varphi_{k-1}(\boldsymbol{x})|^2d\delta\\
=&\sum_{k\in\mathbb{Z}}p^{ks}\int_{\mathbb{Q}_p^n}|\varphi_k(\boldsymbol{x})-\varphi_{k-1}(\boldsymbol{x})|^2d\boldsymbol{x}.
\end{aligned}
\end{equation}
The factor $1/2$ appears only in the double sum because each unoriented edge is counted twice. In each of the last two sums, every edge is indexed exactly once by its endpoint of greater height.

For the variational calculation, first assume that $\varphi$ is real-valued and let
\begin{align*}
    \mathcal{G}=\left\{g\in \mathbb{R}^{\mathcal{T}_{p^n}}\ \middle|\
    \begin{array}{l}
    g_k(\boldsymbol{x})\text{ is locally constant on every ball }
    \boldsymbol{x}_0+p^k\mathbb{Z}_p^n,\\
    g\text{ is supported on finitely many vertices}
    \end{array}\right\}.
\end{align*}
For $g\in\mathcal{G}$, write
$g_k(\boldsymbol{x}_0)=g_k(\boldsymbol{x}_0+p^k\mathbb{Z}_p^n)$. If $\varphi$ minimizes the energy functional \eqref{eq:energy functional}, then
\begin{align*}
&\frac{d\mathcal{E}[\varphi+\varepsilon g]}{d\varepsilon}\Big|_{\varepsilon=0}\\
=&2\sum_{k\in\mathbb{Z}}p^{ks}\int_{\mathbb{Q}_p^n}
(\varphi_k(\boldsymbol{x})-\varphi_{k-1}(\boldsymbol{x}))
(g_k(\boldsymbol{x})-g_{k-1}(\boldsymbol{x}))d\boldsymbol{x}\\
=&2\sum_{k\in\mathbb{Z}}\int_{\mathbb{Q}_p^n}
\left(p^{ks}(\varphi_k(\boldsymbol{x})-\varphi_{k-1}(\boldsymbol{x}))
-p^{(k+1)s}(\varphi_{k+1}(\boldsymbol{x})-\varphi_k(\boldsymbol{x}))\right)
g_k(\boldsymbol{x})d\boldsymbol{x}\\
=&2\sum_{k\in\mathbb{Z}}
\sum_{\boldsymbol{x}_0\in\mathbb{Q}_p^n/p^k\mathbb{Z}_p^n}
g_k(\boldsymbol{x}_0)\Bigg(
p^{ks}\int_{\boldsymbol{x}_0+p^k\mathbb{Z}_p^n}
(\varphi_k(\boldsymbol{x})-\varphi_{k-1}(\boldsymbol{x}))d\boldsymbol{x}-p^{(k+1)s}\int_{\boldsymbol{x}_0+p^k\mathbb{Z}_p^n}
(\varphi_{k+1}(\boldsymbol{x})-\varphi_k(\boldsymbol{x}))d\boldsymbol{x}\Bigg)\\
=&0.
\end{align*}
Because $g_k(\boldsymbol{x}_0)$ may be chosen independently at each vertex, its coefficient in the last expression must vanish. Decomposing the ball $\boldsymbol{x}_0+p^k\mathbb{Z}_p^n$ into its $p^n$ children yields
\begin{align*}
|\boldsymbol{x}_0+p^k\mathbb{Z}_p^n|p^{ks}
(\varphi_k(\boldsymbol{x}_0)-\varphi_{k-1}(\boldsymbol{x}_0))-p^{(k+1)s}\sum_{\boldsymbol{a}\in\mathbb{F}_p^n}
|\boldsymbol{x}_0+\boldsymbol{a}p^k+p^{k+1}\mathbb{Z}_p^n|
(\varphi_{k+1}(\boldsymbol{x}_0+\boldsymbol{a}p^k)-\varphi_k(\boldsymbol{x}_0))=0.
\end{align*}
Using
$|\boldsymbol{x}_0+p^k\mathbb{Z}_p^n|=p^{-nk}$ and
$|\boldsymbol{x}_0+\boldsymbol{a}p^k+p^{k+1}\mathbb{Z}_p^n|=p^{-n(k+1)}$, this equation becomes
\begin{align*}
p^{k(s-n)}(\varphi_k(\boldsymbol{x}_0)-\varphi_{k-1}(\boldsymbol{x}_0))-p^{(k+1)(s-n)}\sum_{\boldsymbol{a}\in\mathbb{F}_p^n}
(\varphi_{k+1}(\boldsymbol{x}_0+\boldsymbol{a}p^k)-\varphi_k(\boldsymbol{x}_0))=0.
\end{align*}
Multiplying by $-1$ yields the Euler--Lagrange equation associated with \eqref{eq:energy functional}, namely the weighted graph Laplacian equation
\begin{align}
\label{eq:Euler-Lagrange equation}
    \Delta_w\varphi(\boldsymbol{x},k)
    =p^{k(s-n)}(\varphi_{k-1}(\boldsymbol{x})-\varphi_k(\boldsymbol{x}))+p^{(k+1)(s-n)}\sum_{\boldsymbol{a}\in\mathbb{F}_p^n}
    (\varphi_{k+1}(\boldsymbol{x}+\boldsymbol{a}p^k)-\varphi_k(\boldsymbol{x}))=0,
\end{align}
for every $k\in\mathbb{Z}$ and every
$\boldsymbol{x}\in\mathbb{Q}_p^n/p^k\mathbb{Z}_p^n$. Since $\Delta_w$ has real coefficients, the same equation holds componentwise for complex-valued functions.

\subsection{Green's identities and the deformed normal derivative}
For the one-sided truncation, consider
\begin{align*}
    \mathcal{T}_{p^n}^{R}
    =\{v=(\boldsymbol{x},k)\in\mathcal{T}_{p^n}\mid k\leq R\}
\end{align*}
and define its upper boundary by
\begin{align*}
    \partial\mathcal{T}_{p^n}^{R}
    =\{v=(\boldsymbol{x},k)\in\mathcal{T}_{p^n}\mid k=R\}.
\end{align*}
Although $\mathcal{T}_{p^n}^{R}$ is infinite, the finite-energy and $L^2$
assumptions in the following lemma ensure that every sum in the proof
converges absolutely.

\begin{lemma}[Green's first identity for the truncated tree $\mathcal{T}_{p^n}^{R}$]
\label{lem:Green's first identity for the truncated tree}
Let $g,h\in\mathbb{R}^{\mathcal{T}_{p^n}}$ satisfy
$\mathcal{E}[g]<+\infty$, $\mathcal{E}[h]<+\infty$, and
$g_R\in L^2(\mathbb{Q}_p^n)$. Then
\begin{align*}
\frac{1}{2}\sum_{\substack{u,v\in\mathcal{T}_{p^n}^{R}\\u\sim v}}
(g(u)-g(v))(h(u)-h(v))w_{uv}=-\sum_{u\in\mathcal{T}_{p^n}^{R}}g(u)\Delta_wh(u)
-\sum_{u\in\partial\mathcal{T}_{p^n}^{R}}g(u)
\sum_{\substack{v\notin\mathcal{T}_{p^n}^{R}\\v\sim u}}
(h(u)-h(v))w_{uv}.
\end{align*}
\end{lemma}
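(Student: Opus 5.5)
We will prove the identity by summation by parts over the edges of $\mathcal{T}_{p^n}^{R}$: first for finitely supported pieces, and then pass to the limit using the finite-energy and $L^2$ hypotheses.

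\textbf{Edge bookkeeping.} Index each edge of $\mathcal{T}_{p^n}^{R}$ by its endpoint of greater height $(\boldsymbol{x},k)$ with $k\le R$. The left-hand side is then
\begin{align*}
\sum_{k\le R}\sum_{\boldsymbol{x}\in\mathbb{Q}_p^n/p^k\mathbb{Z}_p^n}p^{k(s-n)}\bigl(g_k(\boldsymbol{x})-g_{k-1}(\boldsymbol{x})\bigr)\bigl(h_k(\boldsymbol{x})-h_{k-1}(\boldsymbol{x})\bigr).
\end{align*}
By Cauchy--Schwarz and \eqref{eq:energy functional}, this series converges absolutely, with bound $\mathcal{E}[g]^{1/2}\mathcal{E}[h]^{1/2}$.

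\textbf{Splitting and regrouping.} Expand the product as $g_k(h_k-h_{k-1})w-g_{k-1}(h_k-h_{k-1})w$. The first term is attached to the vertex $(\boldsymbol{x},k)$, the second to its parent $(\boldsymbol{x},k-1)$. Now collect all terms at a fixed vertex $u=(\boldsymbol{x},k)$.

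If $k<R$, the vertex receives $g(u)$ times
\begin{align*}
p^{k(s-n)}(h_k-h_{k-1})-p^{(k+1)(s-n)}\sum_{\boldsymbol{a}}\bigl(h_{k+1}(\boldsymbol{x}+\boldsymbol{a}p^k)-h_k(\boldsymbol{x})\bigr),
\end{align*}
which is $-g(u)\Delta_wh(u)$ by \eqref{eq:Euler-Lagrange equation}.

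If $k=R$, the children lie outside $\mathcal{T}_{p^n}^{R}$, so only $g(u)(h(u)-h(u^{\uparrow}))w$ appears. Adding and subtracting the missing children terms $g(u)\sum_{v\notin\mathcal{T}^R_{p^n},v\sim u}(h(u)-h(v))w_{uv}$ produces $-g(u)\Delta_wh(u)$ at level $R$. What remains is exactly the stated boundary sum, with its minus sign.

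\textbf{Justifying the regrouping.} This is the main obstacle: the tree is infinite both horizontally and upward ($k\to-\infty$), so splitting an absolutely convergent edge sum into two vertex sums requires each piece to converge separately.

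First, verify the identity for $g$ finitely supported. In that case every sum is finite and the regrouping is purely algebraic.

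Next, approximate a general $g$ by truncations $g\mathbf{1}_{F}$, where $F$ ranges over an exhausting family of finite subtrees, and pass to the limit.
\begin{itemize}
\item Convergence of the edge side follows from finite energy of $g$ together with dominated convergence.
\item The boundary term converges by Cauchy--Schwarz. Here $g_R\in L^2$, and the level-$R$ flux $p^{(R+1)(s-n)}\sum_{\boldsymbol{a}}(h_{R+1}-h_R)$ is square-summable on level $R$ with weight $p^{-nR}$, because that same expression is controlled by the energy of $h$.
\item For the interior sum $\sum g\Delta_wh$, each level's flux terms are $\ell^2$-controlled by $\mathcal{E}[h]$. One needs $g_k$ in weighted $\ell^2$ at each level $k\le R$; this follows from $g_R\in L^2$ and $g_{k-1}$ being the child average, with $\|g_{k-1}-g_k\|$ bounded by energy.
\end{itemize}

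I would state this absolute convergence as a short sublemma before the main computation. I expect the upward tail $k\to-\infty$ to be the delicate point. If needed, I would handle it by pairing the two vertex sums level by level and appealing to convergence of the original edge series.
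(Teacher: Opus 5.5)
Your algebraic regrouping is correct and is the same interior/boundary edge split the paper uses: collecting terms at each vertex gives $-g(u)\Delta_wh(u)$ for $k<R$, and adding and subtracting the child terms at $k=R$ produces the boundary sum. Your boundary-term bullet is also fine.

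\textbf{The gap is in the analytic justification, which is the real content of the lemma.}

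\emph{The false premise.} Your control of the upward tail rests on the claim that $g_{k-1}$ is the child average of $g_k$. This is false: $g$ is an arbitrary finite-energy vertex function, and no such relation holds. (If it did, it would give $\|g_k\|_2\le\|g_R\|_2$ and trivialize the tail.) Without it, $g_{k-1}=g_k-(g_k-g_{k-1})$ only gives $g_k\in L^2$ level by level. The increments satisfy only $\|g_k-g_{k-1}\|_2\le p^{-ks/2}\mathcal{E}[g]^{1/2}$, and this bound grows as $k\to-\infty$.

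\emph{Why per-level control is not enough.} The vertex sum is dominated by $\sum_{k\le R}p^{ks}\int(|g_k|+|g_{k-1}|)|h_k-h_{k-1}|\,d\boldsymbol{x}+p^{(R+1)s}\int|g_R||h_{R+1}-h_R|\,d\boldsymbol{x}$. Bounding this by Cauchy--Schwarz against $\mathcal{E}[h]$ requires $\{p^{ks/2}\|g_k\|_2\}_{k\le R}\in\ell^2$.

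\emph{The same quantity is hidden in your other steps.}
\begin{itemize}
\item Dominated convergence for the edge side of $g\mathbf{1}_F$ needs a dominating function on cut edges. There the increment is $\pm g(u)$, not $g(u)-g(v)$, so you need $(|g(u)|+|g(v)|)\,|h(u)-h(v)|\,w_{uv}$ to be summable. Finite energy of $g$ alone does not give this.
\item Your fallback of summing levels $K$ through $R$ leaves the cut term $p^{Ks}\int g_K(h_K-h_{K-1})\,d\boldsymbol{x}$. This tends to $0$ as $K\to-\infty$ only if $p^{Ks/2}\|g_K\|_2$ stays bounded, which convergence of the edge series does not imply.
\end{itemize}

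\textbf{The missing idea is the weighted discrete Hardy inequality the paper proves.} Writing $g_k=g_R-\sum_{j=k+1}^{R}(g_j-g_{j-1})$ gives
\[
p^{ks/2}\|g_k\|_2\le p^{ks/2}\|g_R\|_2+\sum_{m=1}^{R-k}p^{-ms/2}\,p^{(k+m)s/2}\|g_{k+m}-g_{k+m-1}\|_2 .
\]
Young's inequality with the summable kernel $\{p^{-ms/2}\}_{m\ge1}$ then yields $\sum_{k\le R}p^{ks}\|g_k\|_2^2<\infty$. Cauchy--Schwarz against $\mathcal{E}[h]$ gives $\sum_{u\in\mathcal{T}_{p^n}^{R}}|g(u)|\sum_{v\sim u}|h(u)-h(v)|w_{uv}<\infty$. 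This justifies every regrouping directly, so the approximation by finitely supported $g\mathbf{1}_F$ is unnecessary.
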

\begin{proof}
We first justify the rearrangements of the infinite sums. Writing
$g_k=g_R-\sum\limits_{j=k+1}^{R}(g_j-g_{j-1})$ for $k\leq R$ and applying
Young's inequality for sequences to the summable kernel
$\{p^{-ms/2}\}_{m\geq1}$ gives the weighted discrete Hardy estimate
\begin{align*}
\left(\sum_{k\leq R}p^{ks}\|g_k\|_2^2\right)^{1/2}\leq\frac{p^{Rs/2}}{\sqrt{1-p^{-s}}}\|g_R\|_2+\frac{p^{-s/2}}{1-p^{-s/2}}\left(\sum_{k\leq R}p^{ks}\|g_k-g_{k-1}\|_2^2\right)^{1/2}<+\infty.
\end{align*}
It follows from the Cauchy--Schwarz inequality and
$\mathcal{E}[h]<+\infty$ that
\begin{align*}
&\sum_{u\in\mathcal{T}_{p^n}^{R}}|g(u)|
\sum_{v\sim u}|h(u)-h(v)|w_{uv}\\
=&\sum_{k\leq R}p^{ks}\int_{\mathbb{Q}_p^n}
(|g_k|+|g_{k-1}|)|h_k-h_{k-1}|d\boldsymbol{x}+p^{(R+1)s}\int_{\mathbb{Q}_p^n}
|g_R||h_{R+1}-h_R|d\boldsymbol{x}<+\infty.
\end{align*}
Thus the vertex sum, the paired interior-edge sum, and the upper boundary
sum below are all absolutely convergent.

By the definition of $\Delta_w$,
\begin{align*}
-\sum_{u\in\mathcal{T}_{p^n}^{R}}g(u)\Delta_wh(u)
=\sum_{u\in\mathcal{T}_{p^n}^{R}}g(u)
\sum_{v\sim u}(h(u)-h(v))w_{uv}.
\end{align*}
We partition the edges in this sum into interior edges and edges crossing
the upper boundary.
\begin{enumerate}
    \item[]\textbf{Case 1.} Suppose that $v\in\mathcal{T}_{p^n}^{R}$, so that $\{u,v\}$ is an interior edge. The two orientations contribute
    \begin{align*}
    g(u)(h(u)-h(v))w_{uv}
    +g(v)(h(v)-h(u))w_{uv}=(g(u)-g(v))(h(u)-h(v))w_{uv}.
    \end{align*}
    Thus the total contribution of the interior edges is
    \begin{align*}
    \frac{1}{2}\sum_{\substack{u,v\in\mathcal{T}_{p^n}^{R}\\u\sim v}}
    (g(u)-g(v))(h(u)-h(v))w_{uv}.
    \end{align*}
    \item[]\textbf{Case 2.} Suppose that $v\notin\mathcal{T}_{p^n}^{R}$. Then
    $u\in\partial\mathcal{T}_{p^n}^{R}$ and $v$ is a child of $u$ at
    height $R+1$. This edge occurs only once in the vertex sum and contributes
    \begin{align*}
    \sum_{u\in\partial\mathcal{T}_{p^n}^{R}}g(u)
    \sum_{\substack{v\notin\mathcal{T}_{p^n}^{R}\\v\sim u}}
    (h(u)-h(v))w_{uv}.
    \end{align*}
\end{enumerate}
Moving the contribution from Case 2 to the other side gives the claimed identity.
\end{proof}

\begin{definition}[Deformed normal derivative on $\partial \mathcal{T}_{p^n}\setminus\{\infty\}$]
\label{def:The deformed normal derivative}
Let $\varphi\in\mathbb{R}^{\mathcal{T}_{p^n}}$ and suppose that
$\lim\limits_{k\to+\infty}\varphi_k(\boldsymbol{x})=f(\boldsymbol{x})$ on
$\partial\mathcal{T}_{p^n}\setminus\{\infty\}\cong\mathbb{Q}_p^n$.
At each point where the following limit exists, we define
\begin{align*}
    \partial_{n,s}^{(p)}\varphi(\boldsymbol{x})
    =\lim_{k\to+\infty}p^{(k+1)s}
    \left(f(\boldsymbol{x})-\varphi_k(\boldsymbol{x})\right).
\end{align*}
If $p^{(k+1)s}(f-\varphi_k)$ converges in $L^2(\mathbb{Q}_p^n)$, we use
the same notation for its $L^2$ limit and specify the mode of convergence
explicitly.
\end{definition}

\begin{theorem}[Green's first identity for $\mathcal{T}_{p^n}$]
\label{thm:Green's first identity for Tpn}
Let $g,h\in\mathbb{R}^{\mathcal{T}_{p^n}}$ satisfy the following conditions:
\begin{enumerate}
    \item $\mathcal{E}[g]<+\infty$ and $\mathcal{E}[h]<+\infty$;
    \item there exist $g_{\partial},h_{\partial}\in L^2(\mathbb{Q}_p^n)$ such that
    $g_R\to g_{\partial}$ and $h_R\to h_{\partial}$ in $L^2(\mathbb{Q}_p^n)$;
    \item $p^{(R+1)s}(h_{\partial}-h_R)
    \longrightarrow\partial_{n,s}^{(p)}h
    \quad\text{in }L^2(\mathbb{Q}_p^n)$;
    \item $\displaystyle\sum_{u\in\mathcal{T}_{p^n}}|g(u)\Delta_wh(u)|<+\infty$;
\end{enumerate}
Then
\begin{align*}
\frac{1}{2}\sum_{\substack{u,v\in\mathcal{T}_{p^n}\\u\sim v}}
(g(u)-g(v))(h(u)-h(v))w_{uv}=-\sum_{u\in\mathcal{T}_{p^n}}g(u)\Delta_wh(u)
+(1-p^{-s})\int_{\mathbb{Q}_p^n}
g_{\partial}(\boldsymbol{x})\partial_{n,s}^{(p)}h(\boldsymbol{x})d\boldsymbol{x}.
\end{align*}
\end{theorem}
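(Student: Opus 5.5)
We apply Lemma~\ref{lem:Green's first identity for the truncated tree} on $\mathcal{T}_{p^n}^{R}$ and let $R\to+\infty$. Conditions 1 and 2 of the theorem give $\mathcal{E}[g],\mathcal{E}[h]<+\infty$ and $g_R\in L^2(\mathbb{Q}_p^n)$ for all large $R$, so the lemma applies for each such $R$. Each of its three terms then has to be identified in the limit.

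\textbf{Step 1: the interior terms.} By condition 1 and the Cauchy--Schwarz inequality,
\begin{align*}
\sum_{u\sim v}|g(u)-g(v)||h(u)-h(v)|w_{uv}\leq 2\,\mathcal{E}[g]^{1/2}\mathcal{E}[h]^{1/2}<+\infty .
\end{align*}
The edge sum over $\mathcal{T}_{p^n}^{R}$ is the partial sum over edges whose lower endpoint has height $\leq R$. Dominated convergence therefore sends it to the full edge sum on $\mathcal{T}_{p^n}$. Likewise, condition 4 and dominated convergence give
\begin{align*}
\sum_{u\in\mathcal{T}_{p^n}^{R}}g(u)\Delta_wh(u)\longrightarrow\sum_{u\in\mathcal{T}_{p^n}}g(u)\Delta_wh(u).
\end{align*}

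\textbf{Step 2: rewrite the boundary term as an integral.} Take $u=(\boldsymbol{x}_0,R)\in\partial\mathcal{T}_{p^n}^{R}$. Its outside neighbours are its $p^n$ children $(\boldsymbol{x}_0+\boldsymbol{a}p^R,R+1)$, each joined to $u$ by an edge of weight $p^{(R+1)(s-n)}$. Each child ball has measure $p^{-n(R+1)}$, and $g_R,h_R$ are constant on $\boldsymbol{x}_0+p^R\mathbb{Z}_p^n$. Hence, exactly as in \eqref{eq:Qpn_to_Qpn/pkZpn},
\begin{align*}
\sum_{u\in\partial\mathcal{T}_{p^n}^{R}}g(u)\sum_{\substack{v\notin\mathcal{T}_{p^n}^{R}\\v\sim u}}(h(u)-h(v))w_{uv}
=p^{(R+1)s}\int_{\mathbb{Q}_p^n}g_R(\boldsymbol{x})\bigl(h_R(\boldsymbol{x})-h_{R+1}(\boldsymbol{x})\bigr)d\boldsymbol{x}.
\end{align*}

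\textbf{Step 3: pass to the limit in the boundary term.} We telescope through the boundary value $h_\partial$:
\begin{align*}
p^{(R+1)s}(h_R-h_{R+1})=p^{-s}\,p^{(R+2)s}(h_\partial-h_{R+1})-p^{(R+1)s}(h_\partial-h_R).
\end{align*}
By condition 3, applied at levels $R$ and $R+1$, the right-hand side converges in $L^2(\mathbb{Q}_p^n)$ to $(p^{-s}-1)\,\partial_{n,s}^{(p)}h$. By condition 2, $g_R\to g_\partial$ in $L^2(\mathbb{Q}_p^n)$. The $L^2$ pairing is jointly continuous, so the boundary term converges to $(p^{-s}-1)\int g_\partial\,\partial_{n,s}^{(p)}h\,d\boldsymbol{x}$. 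The lemma subtracts this term, so in the limit it contributes $+(1-p^{-s})\int_{\mathbb{Q}_p^n}g_\partial\,\partial_{n,s}^{(p)}h\,d\boldsymbol{x}$, which is the claimed formula.

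\textbf{Main obstacle.} The only delicate point is Step 3. A naive estimate of $p^{(R+1)s}(h_R-h_{R+1})$ is useless, because the prefactor $p^{(R+1)s}$ blows up. The telescoping through $h_\partial$ is what makes condition 3 applicable, and it is also the source of the factor $1-p^{-s}$. Everything else is absolute convergence, which is supplied by the finite-energy and summability hypotheses.
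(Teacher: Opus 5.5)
Your proof is correct and follows the paper's argument. Like the paper, you apply the truncated Green identity, rewrite the upper boundary term as a level-$R$ $L^2$ pairing, telescope through $h_\partial$ using Condition (3) at levels $R$ and $R+1$, and pass to the limit using absolute convergence for the interior and bulk sums.

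The one difference is cosmetic. The paper writes the boundary integrand as $p^{(R+1)s}(\mathbb{E}_Rh_{R+1}-h_R)$ and then uses that the ball averages $\mathbb{E}_R$ are $L^2$ contractions converging strongly to the identity. You instead pair $g_R$ directly with $h_R-h_{R+1}$, which is legitimate because $g_R$ is constant on height-$R$ balls, so the averaging step becomes unnecessary.
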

\begin{proof}
We apply Lemma \ref{lem:Green's first identity for the truncated tree} to
$\mathcal{T}_{p^n}^{R}$. For all sufficiently large $R$, Condition (2)
ensures that $g_R\in L^2(\mathbb{Q}_p^n)$, and hence the lemma applies. We
begin with the upper boundary term.
Let $\mathbb{E}_R$ denote averaging over the ball
$\boldsymbol{x}+p^R\mathbb{Z}_p^n$; thus
\begin{align*}
\mathbb{E}_RF(\boldsymbol{x})
=p^{nR}\int_{\boldsymbol{x}+p^R\mathbb{Z}_p^n}F(\boldsymbol{y})d\boldsymbol{y}.
\end{align*}
Choose one representative $\boldsymbol{x}_0$ from each coset in
$\mathbb{Q}_p^n/p^R\mathbb{Z}_p^n$. The height-$R$ ball $\boldsymbol{x}_0+p^R\mathbb{Z}_p^n$ has the $p^n$ children
\begin{align*}
    \boldsymbol{x}_0+\boldsymbol a p^R
    +p^{R+1}\mathbb{Z}_p^n,
    \qquad \boldsymbol a\in\mathbb F_p^n,
\end{align*}
and every edge from height $R$ to height $R+1$ has weight
$p^{(R+1)(s-n)}$. Since $h_{R+1}$ is constant on these children,
\begin{align*}
    \sum_{\boldsymbol a\in\mathbb F_p^n}
    h_{R+1}(\boldsymbol{x}_0+\boldsymbol a p^R)
    =p^n\mathbb E_Rh_{R+1}(\boldsymbol{x}_0).
\end{align*}
The upper boundary term in the one-sided Green identity is therefore
given by the following discrete-to-integral computation:
\begin{align*}
-\sum_{u\in\partial\mathcal{T}_{p^n}^{R}}g(u)
\sum_{\substack{v\notin\mathcal{T}_{p^n}^{R}\\v\sim u}}
(h(u)-h(v))w_{uv}&=-\sum_{\boldsymbol{x}_0\in
\mathbb Q_p^n/p^R\mathbb Z_p^n}
g_R(\boldsymbol{x}_0)p^{(R+1)(s-n)}
\sum_{\boldsymbol a\in\mathbb F_p^n}
\bigl(h_R(\boldsymbol{x}_0)
-h_{R+1}(\boldsymbol{x}_0+\boldsymbol a p^R)\bigr)\\
&=\sum_{\boldsymbol{x}_0\in
\mathbb Q_p^n/p^R\mathbb Z_p^n}
g_R(\boldsymbol{x}_0)p^{(R+1)s-Rn}
\bigl(\mathbb E_Rh_{R+1}(\boldsymbol{x}_0)
-h_R(\boldsymbol{x}_0)\bigr)\\
&=\int_{\mathbb{Q}_p^n}g_R(\boldsymbol{x})p^{(R+1)s}
(\mathbb{E}_Rh_{R+1}(\boldsymbol{x})-h_R(\boldsymbol{x}))d\boldsymbol{x}.
\end{align*}
In the last equality, we used
$m(\boldsymbol{x}_0+p^R\mathbb{Z}_p^n)=p^{-Rn}$.
Moreover, the following identity is exact:
\begin{align*}
p^{(R+1)s}(\mathbb{E}_Rh_{R+1}-h_R)=\mathbb{E}_R\left[p^{(R+1)s}(h_{\partial}-h_R)\right]
-p^{-s}\mathbb{E}_R\left[p^{(R+2)s}(h_{\partial}-h_{R+1})\right].
\end{align*}
The averaging operators are contractions on $L^2$, while
$\mathbb{E}_RF\to F$ in $L^2$ for every $F\in L^2(\mathbb{Q}_p^n)$.
Condition (3) therefore implies
\begin{align*}
p^{(R+1)s}(\mathbb{E}_Rh_{R+1}-h_R)
\longrightarrow(1-p^{-s})\partial_{n,s}^{(p)}h
\quad\text{in }L^2(\mathbb{Q}_p^n).
\end{align*}
Combining this convergence with $g_R\to g_{\partial}$ in $L^2$ and
applying the Cauchy--Schwarz inequality, we obtain
\begin{align*}
\lim_{R\to+\infty}
\int_{\mathbb{Q}_p^n}g_R(\boldsymbol{x})p^{(R+1)s}
(\mathbb{E}_Rh_{R+1}(\boldsymbol{x})-h_R(\boldsymbol{x}))d\boldsymbol{x}=(1-p^{-s})\int_{\mathbb{Q}_p^n}
g_{\partial}(\boldsymbol{x})\partial_{n,s}^{(p)}h(\boldsymbol{x})d\boldsymbol{x}.
\end{align*}
The finite-energy assumptions and the Cauchy--Schwarz inequality give
\begin{align*}
\frac{1}{2}\sum_{\substack{u,v\in\mathcal{T}_{p^n}\\u\sim v}}
|g(u)-g(v)|\,|h(u)-h(v)|w_{uv}<+\infty,
\end{align*}
so the interior-edge sums over $\mathcal{T}_{p^n}^{R}$ converge to the
full edge pairing. Condition (4) justifies passage to the limit in the
vertex sum. Letting $R\to+\infty$ in the one-sided Green identity proves
the result.
\end{proof}
\section{Extension problems for the Vladimirov--Taibleson operator}
Throughout this section, fix $s>0$ and $f\in \mathcal{S}(\mathbb{Q}_p^n)$, and set
$f(\infty)=0$. We seek a bounded solution that extends continuously to
the end compactification of $\mathcal{T}_{p^n}$.

Consider the Dirichlet problem
\begin{align}
\label{eq:the_Dirichlet_problem_extension}
\begin{cases}
    \Delta_w\varphi(v)=0,&v\in \mathcal{T}_{p^n};\\
    \varphi|_{\partial \mathcal{T}_{p^n}}(\boldsymbol{x})=f(\boldsymbol{x}),&\boldsymbol{x}\in \partial \mathcal{T}_{p^n}.
\end{cases}
\end{align}
\begin{remark}
Equivalently, the boundary condition may be written as
$\displaystyle\lim_{k\to+\infty}\varphi_k(\boldsymbol{x})=f(\boldsymbol{x})$
for $\boldsymbol{x}\in\mathbb{Q}_p^n$, and
$\displaystyle\lim_{k\to-\infty}\varphi_k=0$ at the distinguished end
$\infty$. Both limits are understood as end limits; in particular, the
condition at $\infty$ is required along every sequence converging to that
end.
\end{remark}

We prove Theorem
\ref{thm:Extension problems related to the Vladimirov-Taibleson operator}
by two complementary methods: Fourier analysis and the Poisson integral
formula.

\subsection{The Fourier-transform method}
\begin{lemma}[Character orthogonality for $n=1$]
\label{lem:The orthogonal relation of the characters for n=1}
    \begin{align*}
         \int_{\mathbb{Z}_p^*}\chi_p(p^kv)dv=\begin{cases}
             1-p^{-1},&k\geq0;\\
             -p^{-1},&k=-1;\\
             0,&k\leq-2.
         \end{cases},
    \end{align*}
    and
    \begin{align*}
        \int_{\mathbb{Z}_p}\chi_p(p^kv)dv=\begin{cases}
            1,&k\geq0;\\
            0,&k<0.
        \end{cases}.
    \end{align*}
\end{lemma}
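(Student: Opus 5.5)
Both integrals can be computed directly from the two basic facts about the additive character, and the plan is to reduce everything to the ball integral. First, $\chi_p$ is trivial on $\mathbb{Z}_p$, because $\{y\}_p=0$ whenever $v_p(y)\geq0$. Second, $\chi_p$ is nontrivial on every ball $p^{k}\mathbb{Z}_p$ with $k<0$.

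I begin with the second identity, over $\mathbb{Z}_p$.

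\textbf{Case $k\geq0$.} For $v\in\mathbb{Z}_p$ we have $p^kv\in\mathbb{Z}_p$, so $\chi_p(p^kv)=1$. The integral is therefore $|\mathbb{Z}_p|=1$.

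\textbf{Case $k<0$.} I use translation invariance of Haar measure. Substituting $v\mapsto v+1$ (or $v\mapsto v+p^{-k-1}$, which is also in $\mathbb{Z}_p$) gives
\begin{align*}
I=\chi_p(p^{k}\cdot p^{-k-1})\,I=\chi_p(p^{-1})\,I=e^{2\pi i/p}I,
\end{align*}
where $I$ is the integral and the shifted integral picks up the character value from the translation. Since $e^{2\pi i/p}\neq1$, this forces $I=0$.

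Equivalently, the integral is a finite sum. Partition $\mathbb{Z}_p$ into cosets of $p^{-k}\mathbb{Z}_p$, on each of which the integrand is constant. This turns the integral into $p^{k}\sum_{a\in\mathbb{Z}/p^{-k}\mathbb{Z}}e^{2\pi i a/p^{-k}}$, a full sum of nontrivial roots of unity, which vanishes.

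Next, the first identity follows from $\mathbb{Z}_p^*=\mathbb{Z}_p\setminus p\mathbb{Z}_p$. I change variables $v=pw$ in the integral over $p\mathbb{Z}_p$; since $d(pw)=p^{-1}dw$,
\begin{align*}
\int_{\mathbb{Z}_p^*}\chi_p(p^kv)\,dv=\int_{\mathbb{Z}_p}\chi_p(p^kv)\,dv-p^{-1}\int_{\mathbb{Z}_p}\chi_p(p^{k+1}w)\,dw .
\end{align*}
I then plug in the ball result for the exponents $k$ and $k+1$:
\begin{enumerate}
\item For $k\geq0$, both integrals on the right equal $1$, giving $1-p^{-1}$.
\item For $k=-1$, the first integral is $0$ and the second is $1$, giving $-p^{-1}$.
\item For $k\leq-2$, both integrals are $0$, giving $0$.
\end{enumerate}

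The only point needing care is the nontriviality of the character in the case $k<0$. This amounts to checking that the fractional part of $p^{-1}$ is $1/p$, which holds directly from the definition of $\{\cdot\}_p$. The remaining steps are routine bookkeeping with the Haar normalization $|\mathbb{Z}_p|=1$ and the scaling rule $d(pw)=p^{-1}dw$.
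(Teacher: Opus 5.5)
Your proof is correct, but it runs in the opposite direction from the paper's.

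\textbf{The paper's route.} It computes the integral over $\mathbb{Z}_p^*$ first. It writes $v=\sum_{i\ge0}a_ip^i$ and reads off $\{p^{-l}v\}_p=a_0p^{-l}+\dots+a_{l-1}p^{-1}$. This reduces the integral to explicit finite sums of roots of unity: $\sum_{a_0=1}^{p-1}e^{2\pi i a_0/p}=-1$ when $l=1$, and a vanishing inner sum over $a_{l-1}$ when $l>1$. It then obtains the ball integral by decomposing $\mathbb{Z}_p=\bigsqcup_{j\ge0}p^j\mathbb{Z}_p^*$ and summing a geometric series. For $k<0$ this produces the cancellation $-p^k+p^k=0$.

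\textbf{Your route.} You get the ball integral in one step from translation invariance of Haar measure and the nontriviality $\chi_p(p^{-1})=e^{2\pi i/p}\neq1$. You then recover the shell integral by the subtraction $\int_{\mathbb{Z}_p}\chi_p(p^kv)\,dv-p^{-1}\int_{\mathbb{Z}_p}\chi_p(p^{k+1}w)\,dw$. This is exactly the device the paper itself uses later in its $n$-dimensional character-orthogonality lemma.

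\textbf{Comparison.} Your argument is shorter and avoids both the digit bookkeeping and the infinite series. The translation argument also works verbatim for any nontrivial character on a compact group. It would therefore give the $\mathbb{Z}_p^n$ case directly, without the coordinatewise product reduction. The paper's computation is more hands-on and displays the root-of-unity sums explicitly.

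\textbf{Minor points.} Your argument implicitly uses additivity of $\chi_p$, namely $\{x+y\}_p-\{x\}_p-\{y\}_p\in\mathbb{Z}$. This is standard and is what the paper means by calling $\chi_p$ an additive character. In your alternative finite-sum remark, the sum is over all $p^{-k}$-th roots of unity, including $1$. It vanishes because $p^{-k}>1$, not because each term is nontrivial.
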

\begin{proof}
We begin with
    \begin{align*}
        \int_{\mathbb{Z}_p^*}\chi_p(p^kv)dv
        =\int_{\mathbb{Z}_p^*}e^{2\pi i\{p^kv\}_p}dv.
    \end{align*}
    \begin{enumerate}
        \item[]\textbf{Case 1.} If $k\geq0$, then
        $\int_{\mathbb{Z}_p^*}\chi_p(p^kv)dv=|\mathbb{Z}_p^*|=1-p^{-1}$.
        \item[]\textbf{Case 2.} Suppose that $k<0$, and write $k=-l$ with
        $l>0$. Write $v=\sum\limits_{i=0}^{\infty}a_ip^i$, where
        $a_0\in\mathbb{F}_p^{\times}$ and $a_i\in\mathbb{F}_p$ for
        $i\geq1$. Then
        \begin{align*}
            p^kv=p^{-l}v
            =a_0p^{-l}+a_1p^{-l+1}+\dots+a_{l-1}p^{-1}
            +a_l+a_{l+1}p+\cdots,
        \end{align*}
        hence
        \begin{align*}
            \{p^kv\}_p
            =a_0p^{-l}+a_1p^{-l+1}+\dots+a_{l-1}p^{-1}.
        \end{align*}
        It follows that
        \begin{align*}
            \int_{\mathbb{Z}_p^*}\chi_p(p^kv)dv
            =&|1+p^l\mathbb{Z}_p|
            \int_{\mathbb{Z}_p^*/(1+p^l\mathbb{Z}_p)}
            e^{2\pi i(a_0p^{-l}+a_1p^{-l+1}+\dots+a_{l-1}p^{-1})}
            d\delta\\
            =&p^{-l}\sum_{a_0=1}^{p-1}e^{2\pi ia_0p^{-l}}
            \prod_{j=1}^{l-1}\sum_{a_j=0}^{p-1}
            e^{2\pi ia_jp^{-l+j}}.
        \end{align*}
        \begin{enumerate}
            \item If $l=1$, then
            \begin{align*}
                \int_{\mathbb{Z}_p^*}\chi_p(p^kv)dv
                =&p^{-1}\sum_{a_0=1}^{p-1}e^{2\pi ia_0p^{-1}}\\
                =&p^{-1}\left(\sum_{a_0=0}^{p-1}
                e^{2\pi ia_0p^{-1}}-1\right)=-p^{-1}.
            \end{align*}
            \item If $l>1$, then
            \begin{align*}
                \sum_{a_{l-1}=0}^{p-1}e^{2\pi ia_{l-1}p^{-1}}=0,
            \end{align*}
            and hence
            \begin{align*}
                \int_{\mathbb{Z}_p^*}\chi_p(p^kv)dv=0.
            \end{align*}
        \end{enumerate}
    \end{enumerate}
    Combining the two cases gives
    \begin{align*}
         \int_{\mathbb{Z}_p^*}\chi_p(p^kv)dv=\begin{cases}
             1-p^{-1},&k\geq0;\\
             -p^{-1},&k=-1;\\
             0,&k\leq-2.
         \end{cases}
    \end{align*}
    We next compute
    \begin{align*}
        \int_{\mathbb{Z}_p}\chi_p(p^kv)dv
        =&\int_{\mathbb{Z}_p}e^{2\pi i\{p^kv\}_p}dv\\
        =&\sum_{j=0}^{\infty}\int_{p^j\mathbb{Z}_p^*}
        e^{2\pi i\{p^kv\}_p}dv\\
        =&\sum_{j=0}^{\infty}p^{-j}\int_{\mathbb{Z}_p^*}
        e^{2\pi i\{p^{k+j}u\}_p}du.
    \end{align*}
    \begin{enumerate}
        \item[]\textbf{Case 1.} If $k\geq0$, then
        \begin{align*}
             \int_{\mathbb{Z}_p}\chi_p(p^kv)dv
             =\sum_{j=0}^{\infty}p^{-j}(1-p^{-1})=1.
        \end{align*}
        \item[]\textbf{Case 2.} If $k<0$, then
        \begin{align*}
            \int_{\mathbb{Z}_p}\chi_p(p^kv)dv
            =p^{-(-k-1)}(-p^{-1})
            +\sum_{j=-k}^{\infty}p^{-j}(1-p^{-1})
            =-p^k+p^k=0.
        \end{align*}
    \end{enumerate}
    Thus,
    \begin{align*}
        \int_{\mathbb{Z}_p}\chi_p(p^kv)dv=\begin{cases}
            1,&k\geq0;\\
            0,&k<0.
        \end{cases}
    \end{align*}
\end{proof}

\begin{lemma}[Character orthogonality]
\label{lem:The orthogonal relation of the characters}
    \begin{align*}
        \int_{\mathbb{Z}_p^n}
        \chi_p(p^k\boldsymbol{x}\cdot\boldsymbol{\xi})d\boldsymbol{x}
        =\begin{cases}
            1,&k\geq-v_p(\boldsymbol{\xi});\\
            0,&k<-v_p(\boldsymbol{\xi}).
        \end{cases}
    \end{align*}
    and
    \begin{align*}
        \int_{\mathbb{Z}_p^n\setminus p\mathbb{Z}_p^n}
        \chi_p(p^k\boldsymbol{x}\cdot\boldsymbol{\xi})d\boldsymbol{x}
        =\begin{cases}
            1-p^{-n},&k\geq-v_p(\boldsymbol{\xi});\\
            -p^{-n},&k=-v_p(\boldsymbol{\xi})-1;\\
             0,&k\leq-v_p(\boldsymbol{\xi})-2.
        \end{cases}
    \end{align*}
\end{lemma}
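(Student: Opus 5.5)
The plan is to reduce everything to the one-dimensional computations of Lemma \ref{lem:The orthogonal relation of the characters for n=1} by exploiting the product structure of $\mathbb{Z}_p^n$ and of the character. Write $\boldsymbol{\xi}=(\xi_1,\dots,\xi_n)$ and $m=v_p(\boldsymbol{\xi})=\min_i v_p(\xi_i)$. Since $\chi_p(p^k\boldsymbol{x}\cdot\boldsymbol{\xi})=\prod_{i}\chi_p(p^k x_i\xi_i)$ and the Haar measure on $\mathbb{Z}_p^n$ is the product measure, Fubini gives
\begin{align*}
\int_{\mathbb{Z}_p^n}\chi_p(p^k\boldsymbol{x}\cdot\boldsymbol{\xi})d\boldsymbol{x}=\prod_{i=1}^n\int_{\mathbb{Z}_p}\chi_p(p^k x_i\xi_i)dx_i .
\end{align*}
For each factor with $\xi_i\neq0$, I write $\xi_i=p^{v_p(\xi_i)}u_i$ with $u_i\in\mathbb{Z}_p^*$ and substitute $x_i\mapsto u_ix_i$, which is a measure-preserving bijection of $\mathbb{Z}_p$. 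This turns the factor into $\int_{\mathbb{Z}_p}\chi_p(p^{k+v_p(\xi_i)}x)dx$, equal to $1$ if $k+v_p(\xi_i)\ge0$ and $0$ otherwise by the $n=1$ lemma. If $\xi_i=0$, the factor is $1$. The product is therefore $1$ exactly when $k\ge -v_p(\xi_i)$ for all $i$, that is, when $k\ge -m$, and $0$ otherwise. This proves the first formula. I would note that $\boldsymbol{\xi}=\boldsymbol{0}$ (so $m=+\infty$) is consistent, since the integral is then $1$.

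For the second formula, I avoid multiplicative decomposition of the punctured set and use $\mathbb{Z}_p^n\setminus p\mathbb{Z}_p^n=\mathbb{Z}_p^n\setminus p\mathbb{Z}_p^n$ as a difference of integrals. Applying the substitution $\boldsymbol{x}=p\boldsymbol{y}$ on $p\mathbb{Z}_p^n$, which scales the measure by $p^{-n}$, gives
\begin{align*}
\int_{\mathbb{Z}_p^n\setminus p\mathbb{Z}_p^n}\chi_p(p^k\boldsymbol{x}\cdot\boldsymbol{\xi})d\boldsymbol{x}=I(k)-p^{-n}I(k+1),
\end{align*}
where $I(k)$ denotes the first integral. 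I then split into three cases. If $k\ge-m$, then $I(k)=I(k+1)=1$, giving $1-p^{-n}$. If $k=-m-1$, then $I(k)=0$ and $I(k+1)=1$, giving $-p^{-n}$. If $k\le -m-2$, both integrals vanish.

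No step is a real obstacle. The only care needed is in the unit-rescaling substitution and the measure-scaling factor $p^{-n}$ for $p\mathbb{Z}_p^n$, both of which are standard properties of Haar measure.
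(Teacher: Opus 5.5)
Your proposal is correct and follows the paper's proof essentially step for step: the first formula by coordinatewise factorization, the unit rescaling $x_i\mapsto u_ix_i$, and the $n=1$ lemma, and the second formula as $I(k)-p^{-n}I(k+1)$ via $\boldsymbol{x}=p\boldsymbol{y}$ on $p\mathbb{Z}_p^n$, followed by the same three cases. The only blemish is the tautological set identity in your second paragraph, where you evidently mean splitting the integral over $\mathbb{Z}_p^n\setminus p\mathbb{Z}_p^n$ into the integral over $\mathbb{Z}_p^n$ minus the integral over $p\mathbb{Z}_p^n$.
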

\begin{proof}
For each nonzero coordinate, write
$\xi_i=p^{v_p(\xi_i)}\eta_i$ with $\eta_i\in\mathbb{Z}_p^*$; each zero
coordinate contributes a factor of $1$ to the product below. Lemma
\ref{lem:The orthogonal relation of the characters for n=1} yields
    \begin{align*}
        \int_{\mathbb{Z}_p^n}
        \chi_p(p^k\boldsymbol{x}\cdot\boldsymbol{\xi})d\boldsymbol{x}
        =&\prod_{i=1}^{n}\int_{\mathbb{Z}_p}
        \chi_p(p^kx_i\xi_i)dx_i\\
        =&\prod_{\xi_i\neq0}\int_{\mathbb{Z}_p}
        \chi_p(p^{k+v_p(\xi_i)}x_i\eta_i)dx_i\\
        =&\prod_{\xi_i\neq0}\int_{\mathbb{Z}_p}
        \chi_p(p^{k+v_p(\xi_i)}y_i)dy_i\\
        =&\begin{cases}
            1,&k\geq\max\limits_{1\leq i\leq n}\{-v_p(\xi_i)\};\\
            0,&k<\max\limits_{1\leq i\leq n}\{-v_p(\xi_i)\}
        \end{cases}\\
        =&\begin{cases}
            1,&k\geq-v_p(\boldsymbol{\xi});\\
            0,&k<-v_p(\boldsymbol{\xi}).
        \end{cases}
    \end{align*}
    Similarly,
    \begin{align*}
        &\int_{\mathbb{Z}_p^n\setminus p\mathbb{Z}_p^n}
        \chi_p(p^k\boldsymbol{x}\cdot\boldsymbol{\xi})d\boldsymbol{x}\\
        =&\int_{\mathbb{Z}_p^n}
        \chi_p(p^k\boldsymbol{x}\cdot\boldsymbol{\xi})d\boldsymbol{x}
        -p^{-n}\int_{\mathbb{Z}_p^n}
        \chi_p(p^{k+1}\boldsymbol{y}\cdot\boldsymbol{\xi})d\boldsymbol{y}\\
        =&\begin{cases}
            1,&k\geq-v_p(\boldsymbol{\xi});\\
            0,&k<-v_p(\boldsymbol{\xi})
        \end{cases}
        -\begin{cases}
            p^{-n},&k\geq-v_p(\boldsymbol{\xi})-1;\\
            0,&k<-v_p(\boldsymbol{\xi})-1
        \end{cases}\\
        =&\begin{cases}
            1-p^{-n},&k\geq-v_p(\boldsymbol{\xi});\\
            -p^{-n},&k=-v_p(\boldsymbol{\xi})-1;\\
             0,&k\leq-v_p(\boldsymbol{\xi})-2.
        \end{cases}
    \end{align*}
\end{proof}

\begin{lemma}
\label{lem:hat_varphi=0_k<-vp_xi}
Assume that $\varphi_k\in L^1(\mathbb{Q}_p^n)$ and that $\varphi_k$ is
constant on every ball $\boldsymbol{x}_0+p^k\mathbb{Z}_p^n$. Then
$\hat{\varphi}_k(\boldsymbol{\xi})=0$ for
$k<-v_p(\boldsymbol{\xi})$.
\end{lemma}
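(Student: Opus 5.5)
The plan is to decompose the Fourier integral of $\varphi_k$ over the cosets of $p^k\mathbb{Z}_p^n$ and then apply Lemma \ref{lem:The orthogonal relation of the characters} to the integral over a single coset.

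First, since $\varphi_k\in L^1(\mathbb{Q}_p^n)$, the Fourier integral converges absolutely. We may therefore split $\mathbb{Q}_p^n$ into the disjoint balls $\boldsymbol{x}_0+p^k\mathbb{Z}_p^n$, with $\boldsymbol{x}_0$ ranging over representatives of $\mathbb{Q}_p^n/p^k\mathbb{Z}_p^n$, and sum:
\begin{align*}
\hat{\varphi}_k(\boldsymbol{\xi})=\sum_{\boldsymbol{x}_0}\varphi_k(\boldsymbol{x}_0)\int_{\boldsymbol{x}_0+p^k\mathbb{Z}_p^n}\chi_p(\boldsymbol{x}\cdot\boldsymbol{\xi})d\boldsymbol{x}.
\end{align*}
Absolute convergence of $\sum_{\boldsymbol{x}_0}|\varphi_k(\boldsymbol{x}_0)|p^{-nk}=\|\varphi_k\|_1$ justifies this interchange of sum and integral.

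Next, on each ball substitute $\boldsymbol{x}=\boldsymbol{x}_0+p^k\boldsymbol{y}$ with $\boldsymbol{y}\in\mathbb{Z}_p^n$, so that $d\boldsymbol{x}=p^{-nk}d\boldsymbol{y}$. Using that $\chi_p$ is additive, each inner integral becomes
\begin{align*}
\chi_p(\boldsymbol{x}_0\cdot\boldsymbol{\xi})\,p^{-nk}\int_{\mathbb{Z}_p^n}\chi_p(p^k\boldsymbol{y}\cdot\boldsymbol{\xi})d\boldsymbol{y}.
\end{align*}
By the first part of Lemma \ref{lem:The orthogonal relation of the characters}, the remaining integral vanishes whenever $k<-v_p(\boldsymbol{\xi})$. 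Hence every summand is zero and $\hat\varphi_k(\boldsymbol{\xi})=0$.

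The only points needing care are minor. At $\boldsymbol{\xi}=\boldsymbol{0}$ we have $-v_p(\boldsymbol{\xi})=-\infty$, so the condition $k<-v_p(\boldsymbol{\xi})$ is never satisfied and the statement is vacuous there. The interchange of sum and integral is handled by the $L^1$ hypothesis, as noted above. Otherwise the argument is a direct computation with no real obstacle.
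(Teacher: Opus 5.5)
Your proposal is correct and follows the paper's proof: decompose over the cosets of $p^k\mathbb{Z}_p^n$, substitute $\boldsymbol{x}=\boldsymbol{x}_0+p^k\boldsymbol{y}$, and apply the first part of the character orthogonality lemma to get zero for $k<-v_p(\boldsymbol{\xi})$. Your explicit justification of the sum--integral interchange via the $L^1$ hypothesis, and your remark that the case $\boldsymbol{\xi}=\boldsymbol{0}$ is vacuous, are small points the paper leaves implicit.
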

\begin{proof}
    By the assumed local constancy of $\varphi_k$ on each ball
    $\boldsymbol{x}_0+p^k\mathbb{Z}_p^n$,
    \begin{align*}
        \hat{\varphi}_k(\boldsymbol{\xi})
        =&\int_{\mathbb{Q}_p^n}\varphi_k(\boldsymbol{x})
        \chi_p(\boldsymbol{x}\cdot\boldsymbol{\xi})d\boldsymbol{x}\\
        =&\sum_{\boldsymbol{x}_0\in
        \mathbb{Q}_p^n/p^k\mathbb{Z}_p^n}
        \varphi_k(\boldsymbol{x}_0)
        \int_{\boldsymbol{x}_0+p^k\mathbb{Z}_p^n}
        \chi_p(\boldsymbol{x}\cdot\boldsymbol{\xi})d\boldsymbol{x}.
    \end{align*}
    If $k<-v_p(\boldsymbol{\xi})$, Lemma
    \ref{lem:The orthogonal relation of the characters} implies
    \begin{align*}
        &\int_{\boldsymbol{x}_0+p^k\mathbb{Z}_p^n}
        \chi_p(\boldsymbol{x}\cdot\boldsymbol{\xi})d\boldsymbol{x}\\
        =&p^{-nk}\chi_p(\boldsymbol{x}_0\cdot\boldsymbol{\xi})
        \int_{\mathbb{Z}_p^n}
        \chi_p(p^k\boldsymbol{y}\cdot\boldsymbol{\xi})d\boldsymbol{y}=0.
    \end{align*}
    Hence $\hat{\varphi}_k(\boldsymbol{\xi})=0$ for
    $k<-v_p(\boldsymbol{\xi})$.
\end{proof}

\begin{theorem}
\label{thm:Fourier transform of varphi_k}
For $f\in \mathcal{S}(\mathbb{Q}_p^n)$, the solution of
\eqref{eq:the_Dirichlet_problem_extension} has Fourier transform
    \begin{align*}
    \hat{\varphi}_k(\boldsymbol{\xi})=\begin{cases}
        \hat{f}(\boldsymbol{\xi})
        (1-p^{-s(k+1)}\|\boldsymbol{\xi}\|_p^s),
        &k\geq-v_p(\boldsymbol{\xi});\\
        0,&k<-v_p(\boldsymbol{\xi}).
    \end{cases}
\end{align*}
At $\boldsymbol{\xi}=\boldsymbol{0}$, the first line is understood to be
$\hat\varphi_k(\boldsymbol{0})=\hat f(\boldsymbol{0})$.
\end{theorem}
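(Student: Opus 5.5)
The plan is to take the Fourier transform of the Euler--Lagrange equation \eqref{eq:Euler-Lagrange equation} at each height $k$, obtain a scalar second-order recurrence in $k$ for each fixed $\boldsymbol{\xi}$, and fix its two free constants using the two boundary conditions. I will first rewrite \eqref{eq:Euler-Lagrange equation} as an identity between locally constant functions on $\mathbb{Q}_p^n$. The sum $\sum_{\boldsymbol{a}}\varphi_{k+1}(\boldsymbol{x}+\boldsymbol{a}p^k)$ equals $p^n\mathbb{E}_k\varphi_{k+1}(\boldsymbol{x})$, so the equation reads
\begin{align*}
p^{k(s-n)}(\varphi_{k-1}-\varphi_k)+p^{(k+1)(s-n)}p^n(\mathbb{E}_k\varphi_{k+1}-\varphi_k)=0 .
\end{align*}
Multiplying by $p^{-k(s-n)}$ gives $\varphi_{k-1}-\varphi_k+p^{s}(\mathbb{E}_k\varphi_{k+1}-\varphi_k)=0$.

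The averaging operator $\mathbb{E}_k$ is convolution with $p^{nk}\boldsymbol{1}_{p^k\mathbb{Z}_p^n}$. By Lemma \ref{lem:The orthogonal relation of the characters}, its Fourier multiplier is $\boldsymbol{1}_{\{k\ge -v_p(\boldsymbol{\xi})\}}$. Write $k_0=-v_p(\boldsymbol{\xi})$, so that $\|\boldsymbol{\xi}\|_p=p^{k_0}$. Lemma \ref{lem:hat_varphi=0_k<-vp_xi} gives $\hat\varphi_k(\boldsymbol{\xi})=0$ for $k<k_0$, which is the second line of the claimed formula. For $k\ge k_0$ the multiplier equals $1$, and the transformed equation is the constant-coefficient recurrence
\begin{align*}
\hat\varphi_{k-1}-(1+p^s)\hat\varphi_k+p^s\hat\varphi_{k+1}=0,\qquad k\ge k_0+1 .
\end{align*}
Its characteristic roots are $1$ and $p^{-s}$, so $\hat\varphi_k=A+Bp^{-sk}$ for $k\ge k_0$. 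At $k=k_0$ the same equation holds with $\hat\varphi_{k_0-1}=0$, which gives $-(1+p^s)\hat\varphi_{k_0}+p^s\hat\varphi_{k_0+1}=0$. Substituting the general form yields $A+Bp^{-sk_0}(1+p^s-1)=\ldots$, which simplifies to $A=-Bp^{-s(k_0-1)}$. Equivalently, the general solution vanishes at the virtual index $k_0-1$.

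The second constant is fixed by the boundary condition at the finite ends. Since $\varphi_k\to f$ in $L^2$ (or in $L^1$, for Schwartz data), we get $\hat\varphi_k(\boldsymbol{\xi})\to\hat f(\boldsymbol{\xi})$, hence $A=\hat f(\boldsymbol{\xi})$. Then $B=-\hat f(\boldsymbol{\xi})p^{s(k_0-1)}=-\hat f(\boldsymbol{\xi})p^{-s}\|\boldsymbol{\xi}\|_p^s$, and therefore $\hat\varphi_k=\hat f(\boldsymbol{\xi})(1-p^{-s(k+1)}\|\boldsymbol{\xi}\|_p^s)$. At $\boldsymbol{\xi}=\boldsymbol{0}$ every $k$ lies in the range $k\ge k_0$, and boundedness of $\hat\varphi_k(\boldsymbol{0})$ as $k\to-\infty$ forces $B=0$. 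Hence $\hat\varphi_k(\boldsymbol 0)=\hat f(\boldsymbol 0)$.

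The main obstacle is justifying the Fourier manipulations, namely that each $\varphi_k$ lies in $L^1\cap L^2$ so that Lemma \ref{lem:hat_varphi=0_k<-vp_xi} applies and the transform commutes with $\mathbb{E}_k$. The cleanest route is to reverse the logic. Define $\varphi_k$ by inverse Fourier transform of the displayed formula. For $f\in\mathcal{S}$, $\hat f$ is compactly supported and locally constant, so this $\varphi_k$ is a finite wavelet-type expression. Concretely, $\varphi_k=\mathbb{E}_k f-p^{-s(k+1)}\mathbb{E}_kD^sf$, which is constant on $p^k$-balls. It is harmonic by the computation above, it converges to $f$ uniformly as $\mathbb{E}_kf=f$ for large $k$, and it tends to $0$ as $k\to-\infty$ since $\mathbb{E}_kf\to0$ uniformly. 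Uniqueness among bounded solutions that are continuous on the end compactification then identifies this $\varphi$ with the solution of \eqref{eq:the_Dirichlet_problem_extension}. I would prove that uniqueness by a maximum principle: a weighted-harmonic function with zero end limits attains an interior maximum and is therefore constant, hence zero.
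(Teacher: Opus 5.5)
Your route is the paper's: transform the harmonic equation, solve the recurrence with roots $1$ and $p^{-s}$, fix the two constants by the vanishing of $\hat\varphi_{k_0-1}$ and by the boundary limit, then define $\varphi$ by inverse transform and invoke uniqueness. Writing the child sum as $p^n\mathbb{E}_k\varphi_{k+1}$, with multiplier $\boldsymbol{1}_{\{k\geq k_0\}}$, is a tidy substitute for the paper's case analysis of $\sum_{\boldsymbol{a}}\chi_p(-p^k\boldsymbol{a}\cdot\boldsymbol{\xi})$. Your formula $\varphi_k=\mathbb{E}_kf-p^{-s(k+1)}\mathbb{E}_kD^sf$ is correct.

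The gap is in checking that this candidate lies in the class where your maximum principle applies.

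First, $\varphi_k\to0$ as $k\to-\infty$ does not follow from $\mathbb{E}_kf\to0$, because the second term carries the growing factor $p^{-s(k+1)}$. Use instead $0\leq p^{-s(k+1)}\|\boldsymbol{\xi}\|_p^s\leq p^{-s}$ on $\{\|\boldsymbol{\xi}\|_p\leq p^k\}$, which gives $\|\varphi_k\|_\infty\leq p^{nk}\|\hat f\|_\infty$.

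Second, and more seriously, a vertex sequence $(\boldsymbol{x}_j,k_j)$ with $\|\boldsymbol{x}_j\|_p\to\infty$ converges to the end $\infty$ whatever the heights $k_j$ do; they may even tend to $+\infty$. So decay in $k$ alone does not give continuity at $\infty$. Without it, a positive supremum of the difference of two solutions could a priori be approached along such a sequence rather than attained at a vertex, and the maximum principle does not apply.

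This concern is not vacuous. When $\hat f(\boldsymbol{0})\neq0$, $\varphi_k$ is not a finite wavelet sum as you claim; it has a tail. The missing estimate goes as follows. Suppose $\operatorname{supp}f\subseteq\{\|\boldsymbol{y}\|_p\leq p^M\}$ and $\|\boldsymbol{x}\|_p>\max\{p^M,p^{-k}\}$. Then $\mathbb{E}_kf(\boldsymbol{x})=0$, and $D^sf\equiv\frac{1-p^s}{1-p^{-s-n}}\hat f(\boldsymbol{0})\|\boldsymbol{x}\|_p^{-s-n}$ on $\boldsymbol{x}+p^k\mathbb{Z}_p^n$. Hence $|\varphi_k(\boldsymbol{x})|\leq\frac{1-p^{-s}}{1-p^{-s-n}}\|f\|_1\|\boldsymbol{x}\|_p^{-n}$. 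If instead $p^{-k}\geq\|\boldsymbol{x}\|_p>p^M$, the previous bound gives $|\varphi_k(\boldsymbol{x})|\leq p^{nk}\|\hat f\|_\infty\leq\|\hat f\|_\infty\|\boldsymbol{x}\|_p^{-n}$.

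This height-uniform horizontal decay is exactly what the paper extracts from the Poisson kernel; it explicitly defers that point to that section. Together with the bound as $k\to-\infty$, it gives continuity at $\infty$ and completes your argument.
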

\begin{proof}
We first carry out the Fourier calculation for locally constant integrable
layers. The Poisson representation derived later confirms that the formula
obtained below indeed defines such layers. Taking the Fourier transform of
the harmonic equation in \eqref{eq:the_Dirichlet_problem_extension}, we obtain
\begin{align*}
    p^{k(s-n)}
    (\hat{\varphi}_{k-1}(\boldsymbol{\xi})
    -\hat{\varphi}_{k}(\boldsymbol{\xi}))+p^{(k+1)(s-n)}
    \left(\hat{\varphi}_{k+1}(\boldsymbol{\xi})
    \sum_{\boldsymbol{a}\in\mathbb{F}_p^n}
    \chi_p(-p^k\boldsymbol{a}\cdot\boldsymbol{\xi})
    -p^n\hat{\varphi}_{k}(\boldsymbol{\xi})\right)=0,
\end{align*}
together with the Fourier boundary condition
$\hat\varphi_k(\boldsymbol\xi)\to\hat f(\boldsymbol\xi)$ as
$k\to+\infty$.

Fix $\boldsymbol{\xi}\neq\boldsymbol{0}$. Only the following two values of
the finite character sum are needed:
\begin{align*}
    \sum_{\boldsymbol{a}\in\mathbb{F}_p^n}
    \chi_p(-p^k\boldsymbol{a}\cdot\boldsymbol{\xi})
    =\begin{cases}
        p^n,&k\geq-v_p(\boldsymbol{\xi});\\
        0,&k=-v_p(\boldsymbol{\xi})-1.
    \end{cases}
\end{align*}
If $k\geq-v_p(\boldsymbol{\xi})$, every character is trivial. If
$k=-v_p(\boldsymbol{\xi})-1$, at least one coordinate yields a factor equal
to the sum of all $p$-th roots of unity, which is zero. We do not need a uniform formula
when $k\leq-v_p(\boldsymbol{\xi})-2$.

We distinguish three cases. If $k\leq-v_p(\boldsymbol{\xi})-2$, Lemma
\ref{lem:hat_varphi=0_k<-vp_xi} gives
$\hat\varphi_{k-1}=\hat\varphi_k=\hat\varphi_{k+1}=0$, so the transformed
equation is automatic. If $k=-v_p(\boldsymbol{\xi})-1$, the first two
Fourier transforms vanish and the finite character sum is zero, so the
equation again holds. If $k\geq-v_p(\boldsymbol{\xi})$, the finite character
sum is $p^n$, and we obtain
\begin{align*}
    p^{k(s-n)}
    (\hat{\varphi}_{k-1}(\boldsymbol{\xi})
    -\hat{\varphi}_{k}(\boldsymbol{\xi}))+p^{(k+1)(s-n)}p^n
    (\hat{\varphi}_{k+1}(\boldsymbol{\xi})
    -\hat{\varphi}_{k}(\boldsymbol{\xi}))=0,
\end{align*}
which simplifies to
\begin{align*}
    \hat{\varphi}_{k+1}(\boldsymbol{\xi})
    -\hat{\varphi}_{k}(\boldsymbol{\xi})
    =p^{-s}(\hat{\varphi}_{k}(\boldsymbol{\xi})
    -\hat{\varphi}_{k-1}(\boldsymbol{\xi})).
\end{align*}
By Lemma \ref{lem:hat_varphi=0_k<-vp_xi},
\begin{equation}
\label{eq:hatvarphi_k-hatvarphi_k-1}
\begin{aligned}
    \hat{\varphi}_k(\boldsymbol{\xi})
    -\hat{\varphi}_{k-1}(\boldsymbol{\xi})
    =&\bigl(\hat{\varphi}_{-v_p(\boldsymbol{\xi})}
    (\boldsymbol{\xi})
    -\hat{\varphi}_{-v_p(\boldsymbol{\xi})-1}
    (\boldsymbol{\xi})\bigr)
    p^{-s(k+v_p(\boldsymbol{\xi}))}\\
    =&\hat{\varphi}_{-v_p(\boldsymbol{\xi})}(\boldsymbol{\xi})
    p^{-s(k+v_p(\boldsymbol{\xi}))},
\end{aligned}
\end{equation}
for every $k\geq-v_p(\boldsymbol{\xi})$. Therefore,
\begin{align*}
    \hat{f}(\boldsymbol{\xi})
    =&\sum_{k=-v_p(\boldsymbol{\xi})}^{+\infty}
    (\hat{\varphi}_k(\boldsymbol{\xi})
    -\hat{\varphi}_{k-1}(\boldsymbol{\xi}))\\
    =&\hat{\varphi}_{-v_p(\boldsymbol{\xi})}(\boldsymbol{\xi})
    \sum_{k=-v_p(\boldsymbol{\xi})}^{+\infty}
    p^{-s(k+v_p(\boldsymbol{\xi}))}\\
    =&\frac{1}{1-p^{-s}}
    \hat{\varphi}_{-v_p(\boldsymbol{\xi})}(\boldsymbol{\xi}).
\end{align*}
Thus, for $k\geq-v_p(\boldsymbol{\xi})$, we have
\begin{align*}
    \hat{\varphi}_k(\boldsymbol{\xi})
    =&\sum_{i=-v_p(\boldsymbol{\xi})}^{k}
    (\hat{\varphi}_i(\boldsymbol{\xi})
    -\hat{\varphi}_{i-1}(\boldsymbol{\xi}))\\
    =&\hat{\varphi}_{-v_p(\boldsymbol{\xi})}(\boldsymbol{\xi})
    \sum_{i=-v_p(\boldsymbol{\xi})}^{k}
    p^{-s(i+v_p(\boldsymbol{\xi}))}\\
    =&(1-p^{-s})\hat{f}(\boldsymbol{\xi})
    \frac{1-p^{-s(k+v_p(\boldsymbol{\xi})+1)}}{1-p^{-s}}\\
    =&\hat{f}(\boldsymbol{\xi})
    (1-p^{-s(k+1)}\|\boldsymbol{\xi}\|_p^s).
\end{align*}

At $\boldsymbol{\xi}=\boldsymbol{0}$, the finite character sum equals
$p^n$. The two fundamental solutions of the recurrence are $1$ and $p^{-sk}$;
boundedness as $k\to-\infty$ excludes the latter, while the boundary
condition fixes the remaining constant as $\hat f(\boldsymbol{0})$.

Conversely, define $\varphi_k$ by taking the inverse Fourier transform of
the formula in the theorem. Its Fourier support is contained in
$\{\|\boldsymbol{\xi}\|_p\leq p^k\}$, so $\varphi_k$ is constant on every
ball $\boldsymbol{x}+p^k\mathbb{Z}_p^n$. The same three cases above verify
$\Delta_w\varphi=0$ at every level. Choose $N$ such that
$\operatorname{supp}\hat f\subseteq
\{\|\boldsymbol{\xi}\|_p\leq p^N\}$. For $k\geq N$ we have the exact
identity
\begin{align*}
    f-\varphi_k=p^{-s(k+1)}D^sf.
\end{align*}
Fourier inversion and Parseval's identity then imply that
$\varphi_k\to f$ uniformly and in $L^2$ as $k\to+\infty$. On the other
hand,
\begin{align*}
    \|\varphi_k\|_\infty
    \leq\int_{\|\boldsymbol{\xi}\|_p\leq p^k}
    |\hat f(\boldsymbol{\xi})|d\boldsymbol{\xi}
    \leq p^{nk}\|\hat f\|_\infty\longrightarrow0
\end{align*}
as $k\to-\infty$. Continuity at the distinguished end along arbitrary
convergent sequences, as well as uniqueness among bounded functions
continuous on the end compactification, will follow from the Poisson
formula below.
\end{proof}

Using Theorem \ref{thm:Fourier transform of varphi_k}, we compute
\begin{align*}
    \widehat{\partial_{n,s}^{(p)}\varphi}(\boldsymbol{\xi})
    =&\lim_{k\to+\infty}p^{(k+1)s}
    \left(\hat{f}(\boldsymbol{\xi})
    -\hat{\varphi}_k(\boldsymbol{\xi})\right)\\
    =&\lim_{k\to+\infty}p^{(k+1)s}
    \left(\hat{f}(\boldsymbol{\xi})
    -\hat{f}(\boldsymbol{\xi})
    (1-p^{-s(k+1)}\|\boldsymbol{\xi}\|_p^s)\right)\\
    =&\lim_{k\to+\infty}p^{(k+1)s}
    \hat{f}(\boldsymbol{\xi})p^{-s(k+1)}
    \|\boldsymbol{\xi}\|_p^s\\
    =&\|\boldsymbol{\xi}\|_p^s\hat{f}(\boldsymbol{\xi}),
\end{align*}
and hence
\begin{align*}
    \partial_{n,s}^{(p)}\varphi(\boldsymbol{x})=D^sf(\boldsymbol{x}).
\end{align*}
The Fourier multiplier formula and the geometric-series calculation below
verify the finite-energy and boundary hypotheses of Theorem
\ref{thm:Green's first identity for Tpn}. Since $\varphi$ is harmonic, the
bulk term in that identity vanishes. Combining Theorem
\ref{thm:Green's first identity for Tpn} with Parseval's identity therefore
gives
\begin{align*}
    \mathcal{E}[\varphi]
    =&\sum_{k\in\mathbb{Z}}p^{ks}
    \int_{\mathbb{Q}_p^n}
    |\varphi_k(\boldsymbol{x})-\varphi_{k-1}(\boldsymbol{x})|^2
    d\boldsymbol{x}\\
    =&\frac{1}{2}\sum_{u\in \mathcal{T}_{p^n}}
    \sum_{v\sim u}w_{uv}|\varphi(u)-\varphi(v)|^2\\
    =&(1-p^{-s})\int_{\mathbb{Q}_p^n}
    \varphi|_{\partial\mathcal T_{p^n}}(\boldsymbol{x})
    \partial_{n,s}^{(p)}\varphi(\boldsymbol{x})d\boldsymbol{x}\\
    =&(1-p^{-s})\int_{\mathbb{Q}_p^n}
    f(\boldsymbol{x})D^sf(\boldsymbol{x})d\boldsymbol{x}\\
    =&(1-p^{-s})\int_{\mathbb{Q}_p^n}
    \|\boldsymbol{\xi}\|_p^s|\hat{f}(\boldsymbol{\xi})|^2
    d\boldsymbol{\xi}.
\end{align*}
Indeed, the Fourier formula yields
\begin{align*}
\hat\varphi_k(\boldsymbol\xi)-\hat\varphi_{k-1}(\boldsymbol\xi)
=\begin{cases}
0,&k<-v_p(\boldsymbol\xi);\\
(1-p^{-s})p^{-s(k+v_p(\boldsymbol\xi))}\hat f(\boldsymbol\xi),
&k\geq-v_p(\boldsymbol\xi).
\end{cases}
\end{align*}
Summing the resulting geometric series over $k$ gives
$(1-p^{-s})\|\boldsymbol\xi\|_p^s|\hat f(\boldsymbol\xi)|^2$,
and thus verifies the energy constant directly.

\subsection{The Poisson kernel via inverse Fourier transform}
Consider the Poisson equation in the weak boundary sense
\begin{align}
\label{eq:Poisson_equation}
\begin{cases}
    \Delta_wP(v)=0,&v\in \mathcal{T}_{p^n};\\
    P_k(\boldsymbol{y})d\boldsymbol{y}
    \rightharpoonup\delta_{\boldsymbol{0}},&k\to+\infty;\\
    \displaystyle\int_{\mathbb{Q}_p^n}P_k(\boldsymbol y)
    d\boldsymbol y=1,&k\in\mathbb Z;\\
    \|P_k\|_\infty\to0,&k\to-\infty.
\end{cases}
\end{align}
Here $\rightharpoonup$ denotes weak-$*$ convergence of finite Radon measures,
tested against functions in $C_0(\mathbb{Q}_p^n)$.

\begin{theorem}
\label{thm:Poisson_kernel}
    \begin{align*}
        P_k(\boldsymbol{y})
        =\frac{1-p^{-s}}{1-p^{-(s+n)}}\begin{cases}
            p^{nk},&v_p(\boldsymbol{y})\geq k;\\
            p^{-sk}\|\boldsymbol{y}\|_p^{-s-n},
            &v_p(\boldsymbol{y})<k.
        \end{cases}
    \end{align*}
\end{theorem}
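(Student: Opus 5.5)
The plan is to obtain $P_k$ as the inverse Fourier transform of the multiplier from Theorem \ref{thm:Fourier transform of varphi_k}, and then to check the conditions in \eqref{eq:Poisson_equation} directly. Formally, $P$ is the solution with boundary datum $\delta_{\boldsymbol 0}$, whose Fourier transform is $\hat\delta_{\boldsymbol 0}\equiv1$. This gives
\begin{align*}
    \hat P_k(\boldsymbol\xi)=\bigl(1-p^{-s(k+1)}\|\boldsymbol\xi\|_p^s\bigr)\boldsymbol 1_{\{\|\boldsymbol\xi\|_p\le p^k\}}(\boldsymbol\xi),
\end{align*}
and I take $P_k(\boldsymbol y)=\int \hat P_k(\boldsymbol\xi)\chi_p(-\boldsymbol y\cdot\boldsymbol\xi)\,d\boldsymbol\xi$ as the definition.

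To evaluate this integral, split the ball $\{\|\boldsymbol\xi\|_p\le p^k\}$ into the spheres $\|\boldsymbol\xi\|_p=p^j$ for $j\le k$. On each sphere the multiplier equals the constant $1-p^{-s(k+1-j)}$. By the scaling $\boldsymbol\xi=p^{-j}\boldsymbol\eta$ and Lemma \ref{lem:The orthogonal relation of the characters}, the character integral over the sphere $\|\boldsymbol\xi\|_p=p^j$ equals $p^{nj}(1-p^{-n})$ if $j\le v_p(\boldsymbol y)$, equals $-p^{n(j-1)}$ if $j=v_p(\boldsymbol y)+1$, and vanishes otherwise.

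\textbf{Case $v_p(\boldsymbol y)\ge k$.} Every sphere contributes its full measure, so
\begin{align*}
    P_k(\boldsymbol y)=(1-p^{-n})\sum_{j\le k}p^{nj}\bigl(1-p^{-s(k+1-j)}\bigr).
\end{align*}
Summing the two geometric series gives
\begin{align*}
    p^{nk}\Bigl(1-p^{-s}\frac{1-p^{-n}}{1-p^{-s-n}}\Bigr)=p^{nk}\frac{1-p^{-s}}{1-p^{-s-n}}.
\end{align*}

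\textbf{Case $v_p(\boldsymbol y)=m<k$.} Two pieces remain. The first is the full sum over $j\le m$, which equals $(1-p^{-n})\sum_{j\le m}p^{nj}\bigl(1-p^{-s(k+1-j)}\bigr)$. The second is the boundary term $-p^{nm}\bigl(1-p^{-s(k-m)}\bigr)$ from $j=m+1$. The terms without $s$ cancel: they total $p^{nm}-p^{nm}=0$. What survives is
\begin{align*}
    p^{-s(k-m)}p^{nm}\Bigl(1-\frac{p^{-s}(1-p^{-n})}{1-p^{-s-n}}\Bigr)=\frac{1-p^{-s}}{1-p^{-s-n}}\,p^{-sk}\,p^{(s+n)m}.
\end{align*}
Since $p^{(s+n)m}=\|\boldsymbol y\|_p^{-s-n}$, this is the claimed formula.

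The main obstacle is this bookkeeping: making sure the $j=m+1$ boundary term is included exactly once and that the constants combine to the single factor $\frac{1-p^{-s}}{1-p^{-(s+n)}}$.

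Next I verify \eqref{eq:Poisson_equation}.

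\textbf{Harmonicity.} The same three-case argument used in the converse part of Theorem \ref{thm:Fourier transform of varphi_k} applies to $\hat P_k$. Alternatively, it can be checked directly: away from the ray to $\boldsymbol 0$, each vertex $(\boldsymbol y,k)$ with $k>v_p(\boldsymbol y)$ has children all carrying the value $p^{-s(k+1)}\|\boldsymbol y\|^{-s-n}$. The recurrence then reduces to $p^{k(s-n)}(p^{s}-1)+p^{(k+1)(s-n)}p^n(p^{-s}-1)=0$, which holds. The vertices on the ray to $\boldsymbol 0$ are a short computation.

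\textbf{Mass and decay.} Mass one follows from $\int P_k=\hat P_k(\boldsymbol 0)=1$; it can also be checked by summing the radial formula. For $k\to-\infty$, the bound $\|P_k\|_\infty=P_k(\boldsymbol 0)\propto p^{nk}\to0$.

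\textbf{Weak-$*$ convergence.} $P_k\ge0$ and has mass one. The mass outside a ball $\|\boldsymbol y\|_p\le p^{-m}$ is bounded by
\begin{align*}
    \sum_{j<m}p^{-sk}p^{(s+n)j}p^{-nj}\lesssim p^{-s(k-m)}\to0 \qquad (k\to+\infty).
\end{align*}
Hence $P_k\,d\boldsymbol y\rightharpoonup\delta_{\boldsymbol 0}$ when tested against $C_0$ functions.
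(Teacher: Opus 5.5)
Your proposal is correct and follows essentially the paper's route: you define $P_k$ by the multiplier, invert it shell by shell with Lemma~\ref{lem:The orthogonal relation of the characters} and two geometric series, and then verify harmonicity, unit mass, the weak-$*$ trace, and $\|P_k\|_\infty\to0$. The paper instead rescales to $\mathbb{Z}_p^n$ and splits off $\int_{\mathbb{Z}_p^n}\chi_p$ first, a cosmetic difference. The one part of the paper's proof you leave out is its closing uniqueness paragraph (support threshold from Lemma~\ref{lem:hat_varphi=0_k<-vp_xi}, the recurrence in $k$, and $\hat P_k(\boldsymbol\xi)\to1$ forced by the trace), which identifies your Fourier-defined function with \emph{the} solution of \eqref{eq:Poisson_equation} and is invoked again in the paper's subsequent direct derivation of the kernel.
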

\begin{proof}
We define $P_k$ by prescribing its Fourier transform:
\begin{align*}
    \hat{P}_k(\boldsymbol{\xi})=\begin{cases}
        1-p^{-s(k+1)}\|\boldsymbol{\xi}\|_p^s,
        &k\geq-v_p(\boldsymbol{\xi});\\
        0,&k<-v_p(\boldsymbol{\xi}).
    \end{cases}
\end{align*}
At $\boldsymbol\xi=\boldsymbol0$, the first line is understood to equal
$1$. The same calculation in the three frequency regimes considered above
shows directly that these layers are weighted harmonic. Fourier inversion
gives
\begin{align*}
    &P_k(\boldsymbol{y})\\
    =&\int_{\mathbb{Q}_p^n}\hat{P}_k(\boldsymbol{\xi})
    \chi_p(-\boldsymbol{\xi}\cdot\boldsymbol{y})d\boldsymbol{\xi}\\
    =&\int_{p^{-k}\mathbb{Z}_p^n}
    (1-p^{-s(k+1)}\|\boldsymbol{\xi}\|_p^s)
    \chi_p(-\boldsymbol{\xi}\cdot\boldsymbol{y})d\boldsymbol{\xi}\\
    =&p^{nk}\int_{\mathbb{Z}_p^n}
    (1-p^{-s}\|\boldsymbol{\eta}\|_p^s)
    \chi_p(-p^{-k}\boldsymbol{\eta}\cdot\boldsymbol{y})
    d\boldsymbol{\eta}\\
    =&p^{nk}\bigg(
    \int_{\mathbb{Z}_p^n}
    \chi_p(-p^{-k}\boldsymbol{\eta}\cdot\boldsymbol{y})
    d\boldsymbol{\eta}-p^{-s}\sum_{j=0}^{\infty}p^{-js}
    \int_{p^j(\mathbb{Z}_p^n\setminus p\mathbb{Z}_p^n)}
    \chi_p(-p^{-k}\boldsymbol{\eta}\cdot\boldsymbol{y})
    d\boldsymbol{\eta}\bigg)\\
    =&p^{nk}\bigg(
    \int_{\mathbb{Z}_p^n}
    \chi_p(-p^{-k}\boldsymbol{\eta}\cdot\boldsymbol{y})
    d\boldsymbol{\eta}-p^{-s}\sum_{j=0}^{\infty}p^{-j(s+n)}
    \int_{\mathbb{Z}_p^n\setminus p\mathbb{Z}_p^n}
    \chi_p(-p^{-k+j}\boldsymbol{\zeta}\cdot\boldsymbol{y})
    d\boldsymbol{\zeta}\bigg)\\
    =&p^{nk}\left(\begin{cases}
            1,&-k\geq-v_p(\boldsymbol{y});\\
            0,&-k<-v_p(\boldsymbol{y})
        \end{cases}-p^{-s}\sum_{j=0}^{\infty}p^{-j(s+n)}
        \begin{cases}
            1-p^{-n},&-k+j\geq-v_p(\boldsymbol{y});\\
            -p^{-n},&-k+j=-v_p(\boldsymbol{y})-1;\\
             0,&-k+j\leq-v_p(\boldsymbol{y})-2
        \end{cases}\right)\\
    =&p^{nk}\left(\begin{cases}
            1,&v_p(\boldsymbol{y})\geq k;\\
            0,&v_p(\boldsymbol{y})<k
        \end{cases}-p^{-s}\begin{cases}
            (1-p^{-n})\displaystyle\sum_{j=0}^{\infty}p^{-j(s+n)},
            &v_p(\boldsymbol{y})\geq k;\\
            -p^{-n}p^{-(k-v_p(\boldsymbol{y})-1)(s+n)}
            +(1-p^{-n})\displaystyle
            \sum_{j=k-v_p(\boldsymbol{y})}^{\infty}p^{-j(s+n)},
            &v_p(\boldsymbol{y})<k
        \end{cases}\right)\\
    =&p^{nk}\begin{cases}
            1-\dfrac{p^{-s}(1-p^{-n})}{1-p^{-(s+n)}},
            &v_p(\boldsymbol{y})\geq k;\\
            p^{-(k-v_p(\boldsymbol{y}))(s+n)}
            -\dfrac{p^{-s}(1-p^{-n})}{1-p^{-(s+n)}}
            p^{-(k-v_p(\boldsymbol{y}))(s+n)},
            &v_p(\boldsymbol{y})<k
        \end{cases}\\
    =&\frac{1-p^{-s}}{1-p^{-(s+n)}}\begin{cases}
            p^{nk},&v_p(\boldsymbol{y})\geq k;\\
            p^{-sk}\|\boldsymbol{y}\|_p^{-s-n},
            &v_p(\boldsymbol{y})<k.
        \end{cases}
\end{align*}

The formula shows that $P_k$ is nonnegative and integrable. Using
$|\{\|\boldsymbol y\|_p=p^j\}|=(1-p^{-n})p^{nj}$, we obtain
\begin{align*}
    \int_{\mathbb{Q}_p^n}P_k(\boldsymbol y)d\boldsymbol y
    =&\frac{1-p^{-s}}{1-p^{-(s+n)}}
    \left(1+(1-p^{-n})p^{-sk}
    \sum_{j=-k+1}^{\infty}p^{-sj}\right)\\
    =&\frac{1-p^{-s}}{1-p^{-(s+n)}}
    \left(1+\frac{(1-p^{-n})p^{-s}}{1-p^{-s}}\right)=1.
\end{align*}
For every compact-open neighborhood $U$ of the origin, the second branch of
the formula gives, for all sufficiently large $k$,
\begin{align*}
    \int_{U^c}P_k(\boldsymbol y)d\boldsymbol y\longrightarrow0
    \qquad(k\to+\infty).
\end{align*}
Hence
$P_k(\boldsymbol y)d\boldsymbol y\rightharpoonup\delta_{\boldsymbol0}$.
Also,
\begin{align*}
    \|P_k\|_\infty
    =\frac{1-p^{-s}}{1-p^{-(s+n)}}p^{nk}\longrightarrow0
    \qquad(k\to-\infty).
\end{align*}

This Poisson kernel is unique among nonnegative harmonic layer densities of
mass one with the stated weak trace. Indeed, the Fourier transform of any
such layer vanishes below the same support threshold. Since these measures
all have mass one and converge against $C_0(\mathbb{Q}_p^n)$ to the
probability measure $\delta_{\boldsymbol 0}$, the tail of the family is
tight as $k\to+\infty$. Consequently, the convergence also holds against
bounded continuous characters, and the corresponding Fourier transforms
converge to $1$. For each fixed nonzero frequency, the support threshold,
the recurrence above, and this boundary limit uniquely determine the
displayed multiplier. The mass normalization fixes its value at zero
frequency.
\end{proof}

\begin{remark}
    When $s=n=1$,
    \begin{align*}
    P_k(y)=&\frac{1-p^{-1}}{1-p^{-2}}\begin{cases}
            p^k,&v_p(y)\geq k;\\
            p^{-k}|y|_p^{-2},&v_p(y)<k
        \end{cases}\\
        =&\frac{p}{p+1}\min\{p^k,p^{-k}|y|_p^{-2}\},
    \end{align*}
    which agrees with the formula in \cite{MR1003429} after matching
    conventions.
\end{remark}

The normalization
$\int_{\mathbb{Q}_p^n}P_k(\boldsymbol{y})d\boldsymbol{y}=1$ gives
\begin{align*}
    f(\boldsymbol{x})
    =\int_{\mathbb{Q}_p^n}P_k(\boldsymbol{y})
    f(\boldsymbol{x})d\boldsymbol{y},
\end{align*}
and
\begin{align*}
    \varphi_k(\boldsymbol{x})
    =(P_k*f)(\boldsymbol{x})
    =\int_{\mathbb{Q}_p^n}P_k(\boldsymbol{y})
    f(\boldsymbol{x}-\boldsymbol{y})d\boldsymbol{y}.
\end{align*}
Because $f$ is uniformly locally constant and $\{P_k\}$ is an approximate
identity, $\varphi_k\to f$ uniformly as $k\to+\infty$. Also,
\begin{align*}
    \|\varphi_k\|_\infty
    \leq\|P_k\|_\infty\|f\|_1\longrightarrow0
\end{align*}
as $k\to-\infty$. If
$\operatorname{supp}f\subseteq
\{\|\boldsymbol y\|_p\leq p^M\}$ and
$\|\boldsymbol x\|_p>p^M$, then ultrametricity and the explicit kernel give
\begin{align*}
    |\varphi_k(\boldsymbol x)|
    \leq\frac{1-p^{-s}}{1-p^{-(s+n)}}\|f\|_1
    \min\left\{p^{nk},
    p^{-sk}\|\boldsymbol x\|_p^{-n-s}\right\}\leq C\|f\|_1\|\boldsymbol x\|_p^{-n}.
\end{align*}
These estimates establish continuity at the distinguished end, including
along sequences in which the height and the horizontal coordinate vary
simultaneously. Finally, the
difference of two bounded end-continuous solutions with the same boundary
values is a bounded harmonic function vanishing at every end. The maximum
principle on the compactified tree, applied to the real and imaginary parts
when necessary, shows that this difference is identically zero.

We now identify the Dirichlet-to-Neumann map in real space. We have
\begin{align*}
    f(\boldsymbol{x})-\varphi_k(\boldsymbol{x})
    =\frac{1-p^{-s}}{1-p^{-(s+n)}}\bigg(p^{nk}
    \int_{\|\boldsymbol{y}\|_p\leq p^{-k}}
    \bigl(f(\boldsymbol{x})-f(\boldsymbol{x}-\boldsymbol{y})\bigr)
    d\boldsymbol{y}+p^{-sk}\int_{\|\boldsymbol{y}\|_p>p^{-k}}
    \frac{f(\boldsymbol{x})-f(\boldsymbol{x}-\boldsymbol{y})}
    {\|\boldsymbol{y}\|_p^{s+n}}d\boldsymbol{y}\bigg).
\end{align*}
Since $f\in \mathcal{S}(\mathbb{Q}_p^n)$, there exists $k_0$ such that
$f(\boldsymbol{x})=f(\boldsymbol{x}-\boldsymbol{y})$ for all
$\boldsymbol{x}$ whenever $\|\boldsymbol{y}\|_p\leq p^{-k_0}$.
For $k\geq k_0$, the first integral therefore vanishes, and the second
integral can be extended to all of $\mathbb{Q}_p^n$. Hence
\begin{align*}
    \partial_{n,s}^{(p)}\varphi(\boldsymbol{x})
    =&\lim_{k\to+\infty}p^{(k+1)s}
    \left(f(\boldsymbol{x})-\varphi_k(\boldsymbol{x})\right)\\
    =&\frac{p^s-1}{1-p^{-(s+n)}}
    \int_{\mathbb{Q}_p^n}
    \frac{f(\boldsymbol{x})-f(\boldsymbol{x}-\boldsymbol{y})}
    {\|\boldsymbol{y}\|_p^{s+n}}d\boldsymbol{y}\\
    =&D^sf(\boldsymbol{x}).
\end{align*}

\subsection{A direct derivation of the Poisson kernel}
We solve the Poisson equation \eqref{eq:Poisson_equation} directly and derive
the formula in Theorem \ref{thm:Poisson_kernel}. Relative to the path
$\infty\to\boldsymbol{0}$, the vertices of $\mathcal{T}_{p^n}$ fall into
the following two classes:
\begin{enumerate}
    \item $v=(\boldsymbol{x},k)\in\{\infty\to\boldsymbol{0}\}$.
    \item $v=(\boldsymbol{x},j)\notin
    \{\infty\to\boldsymbol{0}\}$. Such a vertex branches off from the path
    $\infty\to\boldsymbol{0}$ at $(\boldsymbol{y},k)$ with $k<j$ and lies
    at distance $r=j-k$ from the branching point.
\end{enumerate}
By symmetry with respect to the path
$\infty\to\boldsymbol{0}$, the value of the Poisson kernel at such a vertex
depends only on the branching level $k$ and the distance $r$ from the path.
We therefore use separation of variables from the outset and seek a
solution of the form
\begin{align*}
    P(v)=A(k)B(r).
\end{align*}
We shall verify that this ansatz yields a function satisfying the harmonic
equation, the normalization, and all the boundary conditions in
\eqref{eq:Poisson_equation}. The uniqueness established above then
identifies this function as the Poisson kernel.

\begin{enumerate}
    \item[]\textbf{Case 1.}
    $v\notin\{\infty\to\boldsymbol{0}\}$ (determine $B(r)$).
    As shown in Figure \ref{fig:case1}, the height of $v$ is $k+r$.
    The edge joining $v$ to the vertex at distance $r-1$ has weight
    $p^{(k+r)(s-n)}$, whereas each of the $p^n$ edges joining $v$ to
    vertices at distance $r+1$ has weight
    $p^{(k+r+1)(s-n)}$. The harmonic equation in
    \eqref{eq:Poisson_equation} therefore becomes
    \begin{align*}
        p^{(k+r)(s-n)}
        \bigl(A(k)B(r-1)-A(k)B(r)\bigr)+p^{(k+r+1)(s-n)}p^n
        \bigl(A(k)B(r+1)-A(k)B(r)\bigr)=0.
    \end{align*}
    Dividing both sides by $p^{(k+r)(s-n)}A(k)$ and simplifying, we obtain
    \begin{align*}
        p^sB(r+1)-(1+p^s)B(r)+B(r-1)=0.
    \end{align*}
    This second-order difference equation has characteristic equation
    \begin{align*}
        p^s\lambda^2-(1+p^s)\lambda+1=0,
    \end{align*}
    whose roots are
    \begin{align*}
        \lambda=1\qquad\text{or}\qquad\lambda=p^{-s}.
    \end{align*}
    Thus
    \begin{align*}
        B(r)=C_1+C_2p^{-sr}.
    \end{align*}
    The weak boundary condition
    $P_k(\boldsymbol{y})d\boldsymbol{y}
    \rightharpoonup\delta_{\boldsymbol{0}}$ requires the mass of every fixed
    compact-open boundary ball disjoint from $\boldsymbol{0}$ to tend to
    zero. If $C_1\neq0$, the mass of such an off-axis ball would instead
    converge to a nonzero limit. Hence $C_1=0$. Absorbing $C_2$ into $A(k)$
    and taking $B(0)=1$, we obtain
    \begin{align*}
        B(r)=p^{-sr}.
    \end{align*}
    Consequently,
    \begin{align*}
        P(v)=A(k)B(r)=A(k)p^{-sr}.
    \end{align*}
    \begin{figure}[H]
        \centering
        \includegraphics[width=0.6\textwidth]{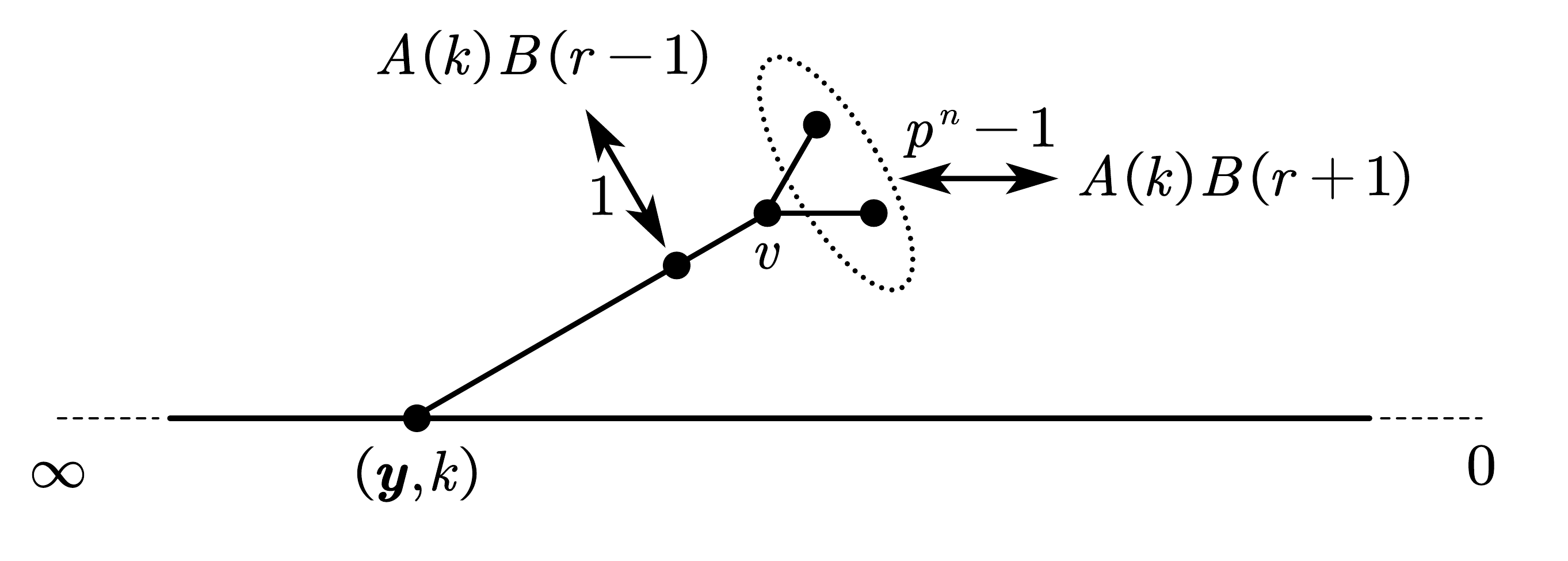}
        \caption{An off-axis vertex
        $v\notin\{\infty\to\boldsymbol{0}\}$}
        \label{fig:case1}
    \end{figure}

    \item[]\textbf{Case 2.}
    $v\in\{\infty\to\boldsymbol{0}\}$ (determine $A(k)$). As shown in Figure
    \ref{fig:case2}, we have $r=0$, and equation
    \eqref{eq:Poisson_equation} becomes
    \begin{align*}
        &p^{k(s-n)}(A(k-1)B(0)-A(k)B(0))\\
        &+p^{(k+1)(s-n)}
        \left(A(k+1)B(0)+(p^n-1)A(k)B(1)-p^nA(k)B(0)\right)=0.
    \end{align*}
    Dividing both sides by $p^{k(s-n)}$ and substituting
    $B(0)=1$, $B(1)=p^{-s}$, we obtain
    \begin{align*}
        p^{s-n}A(k+1)-(p^s+p^{-n})A(k)+A(k-1)=0.
    \end{align*}
    The corresponding characteristic equation is
    \begin{align*}
        p^{s-n}\mu^2-(p^s+p^{-n})\mu+1=0
        \quad\Longrightarrow\quad\mu=p^n\text{ or }p^{-s}.
    \end{align*}
    Hence
    \begin{align*}
        A(k)=C_1p^{nk}+C_2p^{-sk}.
    \end{align*}
    Since $\|P_k\|_\infty\to0$ as $k\to-\infty$, we have
    $A(k)\to0$, and therefore $C_2=0$. Hence
    \begin{align*}
        A(k)=C_1p^{nk}.
    \end{align*}
    \begin{figure}[H]
        \centering
        \includegraphics[width=0.6\textwidth]{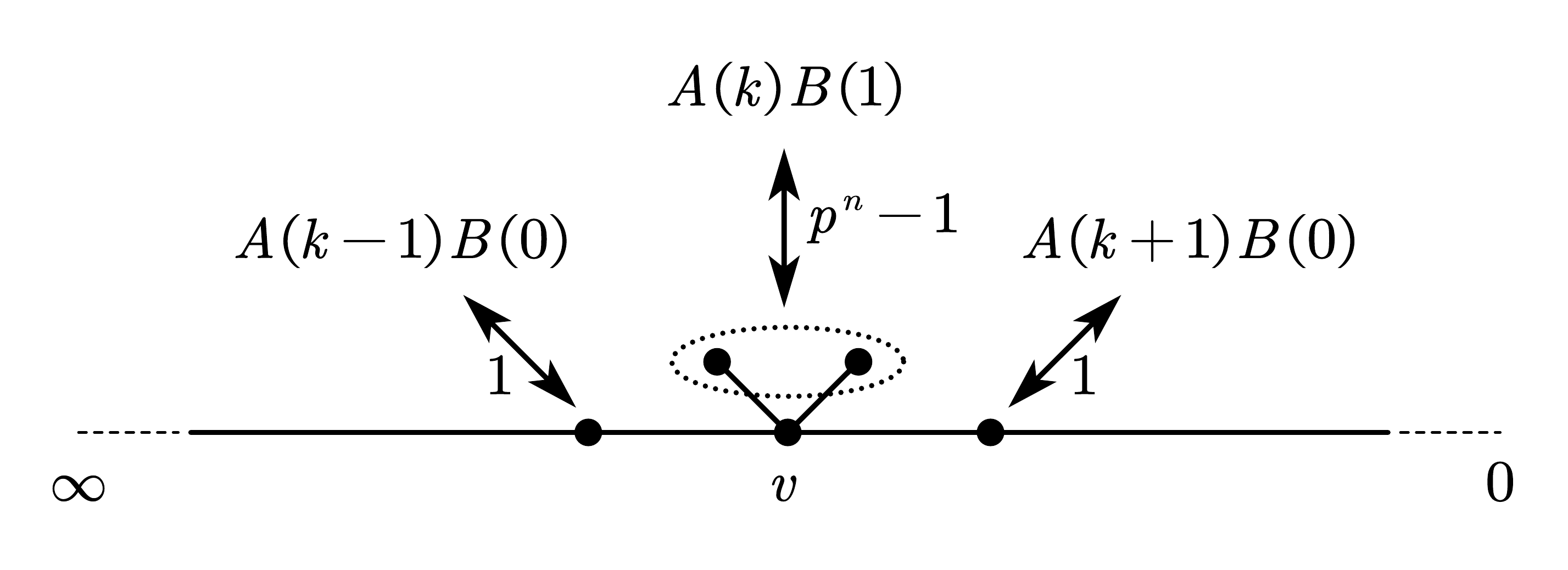}
        \caption{A vertex $v\in\{\infty\to\boldsymbol{0}\}$ on the
        distinguished geodesic}
        \label{fig:case2}
    \end{figure}
\end{enumerate}

At each height $l$, the preceding solution gives
\begin{align*}
    \int_{\mathbb{Q}_p^n}P(\boldsymbol{y},l)d\boldsymbol{y}
    =&p^{-nl}\sum_{\boldsymbol{y}\in
    \mathbb{Q}_p^n/p^l\mathbb{Z}_p^n}P(\boldsymbol{y},l)\\
    =&p^{-nl}\left(A(l)B(0)+
    \sum_{k=-\infty}^{l-1}(p^n-1)p^{n(l-1-k)}A(k)B(l-k)\right)\\
    =&p^{-nl}\left(C_1p^{nl}+
    \sum_{k=-\infty}^{l-1}(p^n-1)p^{n(l-1-k)}
    C_1p^{nk-s(l-k)}\right)\\
    =&C_1\left(1+(1-p^{-n})
    \sum_{k=-\infty}^{l-1}p^{-s(l-k)}\right)\\
    =&C_1\left(1+(1-p^{-n})
    \sum_{m=1}^{+\infty}p^{-sm}\right)\\
    =&C_1\frac{1-p^{-(s+n)}}{1-p^{-s}}.
\end{align*}
This expression is independent of $l$. By the normalization in
\eqref{eq:Poisson_equation}, the displayed integral equals $1$ for every
$l$, and hence
\begin{align*}
    C_1=\frac{1-p^{-s}}{1-p^{-(s+n)}}.
\end{align*}
Thus,
\begin{align*}
    P(v)=\frac{1-p^{-s}}{1-p^{-(s+n)}}p^{nk-sr}.
\end{align*}
We now distinguish the two classes of vertices:
\begin{enumerate}
    \item $v=(\boldsymbol{x},k)\in\{\infty\to\boldsymbol{0}\}$. This
    is equivalent to the inclusion
    $\boldsymbol{0}\in\boldsymbol{x}+p^k\mathbb{Z}_p^n$, and therefore
    $\|\boldsymbol{x}\|_p\leq p^{-k}$. Then
    \begin{align*}
        P(v)=A(k)B(0)
        =\frac{1-p^{-s}}{1-p^{-(s+n)}}p^{nk}.
    \end{align*}
    \item $v=(\boldsymbol{x},k)\notin
    \{\infty\to\boldsymbol{0}\}$. Then it branches off from the path
    $\infty\to\boldsymbol{0}$ at $(\boldsymbol{y},j)$ with $j<k$. This
    means that $\boldsymbol{x}+p^k\mathbb{Z}_p^n$ does not contain the
    origin, whereas $\boldsymbol{x}+p^j\mathbb{Z}_p^n$ is the smallest ball
    on the chain that does. Therefore
    $\|\boldsymbol{x}\|_p=p^{-j}>p^{-k}$, and
    \begin{align*}
        P(v)=A(j)B(k-j)
        =&\frac{1-p^{-s}}{1-p^{-(s+n)}}p^{nj-s(k-j)}\\
        =&\frac{1-p^{-s}}{1-p^{-(s+n)}}
        p^{-sk}\|\boldsymbol{x}\|_p^{-s-n}.
    \end{align*}
\end{enumerate}
Thus,
\begin{align*}
    P_k(\boldsymbol{x})
    =\frac{1-p^{-s}}{1-p^{-(s+n)}}\begin{cases}
        p^{nk},&v_p(\boldsymbol{x})\geq k;\\
        p^{-sk}\|\boldsymbol{x}\|_p^{-s-n},
        &v_p(\boldsymbol{x})<k.
    \end{cases}
\end{align*}
\section{An alternative extension method}
Motivated by the holographic construction in \cite{MR3631398}, we
present a second extension method. As in Section 4, the result can be
proved either by Fourier analysis or by the Poisson integral formula.
Here we prove Theorem
\ref{thm:another_Extension_method_related_to_the_Vladimirov-Taibleson_operator}
by Fourier analysis.

Let
\begin{align*}
    \alpha>\frac{n}{2},\qquad
    s=2\alpha-n>0,\qquad
    m_p^2=p^{\alpha}+p^{n-\alpha}-p^n-1.
\end{align*}
Following the holographic literature, we write this parameter as $m_p^2$; the notation does not imply that $m_p^2\geq0$. Indeed,
\begin{align*}
    m_p^2=(p^\alpha-1)(1-p^{n-\alpha}),
\end{align*}
which is negative when $\frac n2<\alpha<n$.

We begin with the finite-cutoff construction. In the
horocyclic coordinates introduced above, set
\begin{align*}
    \mathcal T_{p^n}^{R}
    =\{(\boldsymbol x,k)\in\mathcal T_{p^n}:k\leq R\},
    \qquad
    \partial\mathcal T_{p^n}^{R}
    =\{(\boldsymbol x,R):
    \boldsymbol x\in\mathbb Q_p^n/p^R\mathbb Z_p^n\}.
\end{align*}
This is the horocyclic truncation associated with the nested-ball
coordinates.
Given layer-$R$ data $\varphi_R$, we first solve
\begin{align*}
    (\Delta-m_p^2)\varphi(v)=0,
    \qquad v\in\mathcal T_{p^n}^{R-1}.
\end{align*}
For the finite-cutoff boundary calculation below, we then attach $p^n$
artificial children to each height-$R$ vertex and assign them the common value
\begin{align*}
    \varphi(v_{R+1})=p^{\alpha-n}\varphi(v_R).
\end{align*}
This multiplier serves only to define the cutoff residual; the artificial
layer need not coincide with the height-$R+1$ layer of the full-tree
solution. The boundary datum is recovered from the renormalized
trace
\begin{align*}
    f(\boldsymbol x)
    =\lim_{R\to+\infty}p^{(n-\alpha)R}\varphi_R(\boldsymbol x).
\end{align*}
The horocycle $\partial\mathcal T_{p^n}^{R}$ is countably infinite, and the
boundary sums below are interpreted as $L^2$ pairings of the corresponding
layer step functions. This cutoff construction leads to the following full-tree
formulation, in which the massive equation holds at every level.

Consider the following extension problem:
\begin{align}
\label{eq:mass_square_term_extension}
    \begin{cases}
        (\Delta-m_p^2)\varphi(v)=0,&v\in\mathcal{T}_{p^n};\\
        \displaystyle\lim_{k\to+\infty}p^{(n-\alpha)k}\varphi_k=f,
        &\text{in }L^2(\mathbb{Q}_p^n).
    \end{cases}
\end{align}
Here the second condition is a scale-corrected, or renormalized, boundary
limit rather than the ordinary trace of $\varphi$. A solution of
\eqref{eq:mass_square_term_extension} is a sequence of layer functions
$\varphi_k\in L^2(\mathbb{Q}_p^n)$. Each $\varphi_k$ is constant on every
coset of $p^k\mathbb{Z}_p^n$; the massive equation holds as an identity in
$L^2(\mathbb{Q}_p^n)$ at every level; and the displayed renormalized
boundary limit holds in $L^2(\mathbb{Q}_p^n)$.

\begin{theorem}
\label{thm:Fourier_transform_mass_square_term_extension}
For every real-valued $f\in \mathcal{S}(\mathbb{Q}_p^n)$, problem \eqref{eq:mass_square_term_extension} has a unique solution in this class. For $\boldsymbol{\xi}\neq\boldsymbol{0}$, its Fourier transform is
\begin{align*}
    \hat{\varphi}_{k}(\boldsymbol{\xi})=\begin{cases}
        \hat{f}(\boldsymbol{\xi})\left(p^{(\alpha-n)k}-p^{n-2\alpha}\|\boldsymbol{\xi}\|_p^{2\alpha-n}p^{-\alpha k}\right),&k\geq-v_p(\boldsymbol{\xi});\\
        0,&k<-v_p(\boldsymbol{\xi}).
    \end{cases}
\end{align*}
The value at the zero frequency $\boldsymbol{\xi}=\boldsymbol{0}$ may be
taken from the same multiplier formula and does not affect the $L^2$
solution. Moreover, the renormalized boundary limit holds uniformly for
the canonical representatives of the layer functions.
\end{theorem}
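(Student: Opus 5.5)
We propose to work entirely on the Fourier side, as in the proof of Theorem~\ref{thm:Fourier transform of varphi_k}. For the unweighted tree, the massive equation at $(\boldsymbol{x},k)$ reads
\begin{align*}
    \varphi_{k-1}(\boldsymbol{x})+\sum_{\boldsymbol{a}\in\mathbb{F}_p^n}\varphi_{k+1}(\boldsymbol{x}+\boldsymbol{a}p^k)-(p^n+1+m_p^2)\varphi_k(\boldsymbol{x})=0 .
\end{align*}
Here $p^n+1+m_p^2=p^\alpha+p^{n-\alpha}$. Taking Fourier transforms in $L^2$, and using the character sum computed in the proof of Theorem~\ref{thm:Fourier transform of varphi_k}, gives
\begin{align*}
    \hat\varphi_{k-1}(\boldsymbol{\xi})+\hat\varphi_{k+1}(\boldsymbol{\xi})\sum_{\boldsymbol{a}\in\mathbb{F}_p^n}\chi_p(-p^k\boldsymbol{a}\cdot\boldsymbol{\xi})-(p^\alpha+p^{n-\alpha})\hat\varphi_k(\boldsymbol{\xi})=0 .
\end{align*}

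\textbf{Step 1: support of the layers.} We first replace Lemma~\ref{lem:hat_varphi=0_k<-vp_xi} by an $L^2$ version. If $\varphi_k\in L^2$ is invariant under translation by every $\boldsymbol{t}\in p^k\mathbb{Z}_p^n$, then
\begin{align*}
    \hat\varphi_k(\boldsymbol{\xi})\bigl(\chi_p(\boldsymbol{t}\cdot\boldsymbol{\xi})-1\bigr)=0\quad\text{a.e.}
\end{align*}
Hence $\hat\varphi_k=0$ a.e. on $\{\|\boldsymbol{\xi}\|_p>p^k\}$, that is, for $k<-v_p(\boldsymbol{\xi})$.

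\textbf{Step 2: solving the recurrence.} Fix $\boldsymbol{\xi}\neq\boldsymbol{0}$ and set $k_0=-v_p(\boldsymbol{\xi})$. We split into the same three regimes as before.
\begin{enumerate}
    \item[] If $k\le k_0-2$, the equation is trivial.
    \item[] If $k=k_0-1$, the character sum vanishes and $\hat\varphi_{k_0-2}=\hat\varphi_{k_0-1}=0$, so the equation again holds.
    \item[] If $k\ge k_0$, the sum equals $p^n$, and we obtain $p^n\hat\varphi_{k+1}-(p^\alpha+p^{n-\alpha})\hat\varphi_k+\hat\varphi_{k-1}=0$.
\end{enumerate}
The characteristic polynomial $p^n\mu^2-(p^\alpha+p^{n-\alpha})\mu+1$ has roots $\mu=p^{\alpha-n}$ and $\mu=p^{-\alpha}$. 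Hence
\begin{align*}
    \hat\varphi_k=a\,p^{(\alpha-n)k}+b\,p^{-\alpha k}\qquad\text{for }k\ge k_0-1,
\end{align*}
and the constraint $\hat\varphi_{k_0-1}=0$ holds. The renormalized boundary condition, read through Plancherel, gives $p^{(n-\alpha)k}\hat\varphi_k\to\hat f$ in $L^2$. Since $p^{(n-\alpha)k}\hat\varphi_k=a+b\,p^{(n-2\alpha)k}$ and $n-2\alpha<0$, this forces $a=\hat f(\boldsymbol{\xi})$ pointwise a.e. (after passing to a subsequence). The vanishing at $k_0-1$ then gives
\begin{align*}
    b=-\hat f(\boldsymbol{\xi})\,p^{(2\alpha-n)(k_0-1)}=-\hat f(\boldsymbol{\xi})\,p^{n-2\alpha}\|\boldsymbol{\xi}\|_p^{2\alpha-n}.
\end{align*}
This is exactly the displayed formula. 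It also proves uniqueness, because two solutions have Fourier transforms agreeing off the null set $\{\boldsymbol{0}\}$.

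\textbf{Step 3: existence and uniform convergence.} Conversely, we define $\hat\varphi_k$ by the formula in the statement. Choose $N$ with $\operatorname{supp}\hat f\subseteq\{\|\boldsymbol{\xi}\|_p\le p^N\}$.
\begin{enumerate}
    \item[] On its support, $\hat\varphi_k$ is $\hat f$ times a bounded multiplier. So $\hat\varphi_k$ is bounded and compactly supported, and $\varphi_k\in L^2\cap C_0$.
    \item[] The support of $\hat\varphi_k$ lies in $\{\|\boldsymbol{\xi}\|_p\le p^k\}$, so $\varphi_k$ is constant on cosets of $p^k\mathbb{Z}_p^n$.
    \item[] The three regimes above verify the massive equation at every level.
\end{enumerate}
For $k\ge N$ we have the exact identity
\begin{align*}
    p^{(n-\alpha)k}\varphi_k-f=-p^{(n-2\alpha)(k+1)}D^{2\alpha-n}f,
\end{align*}
with $D^{2\alpha-n}f\in\mathcal{S}(\mathbb{Q}_p^n)$ because $\hat f$ is compactly supported and locally constant away from $\boldsymbol{0}$ (near $\boldsymbol{0}$, $\hat f$ is constant; the multiplier only needs integrability there). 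This gives both $L^2$ convergence and uniform convergence: the sup norm is bounded by the $L^1$ norm of the Fourier side, which is finite and multiplied by $p^{(n-2\alpha)(k+1)}\to0$.

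We expect the main obstacle to be Step 1 together with the pointwise identification $a=\hat f$. The uniqueness class only assumes $L^2$ layers rather than $L^1$, so Lemma~\ref{lem:hat_varphi=0_k<-vp_xi} must be re-derived by the translation-invariance argument. In addition, the boundary limit must be converted from $L^2$ to a.e. statements on each frequency shell, where the two-parameter form is rigid.
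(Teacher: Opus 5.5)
Your proposal is correct and follows the paper's proof essentially step for step: you take the Fourier transform of the massive equation, split into the same three frequency regimes, and find the roots $p^{\alpha-n}$ and $p^{-\alpha}$; the vanishing of the layer just below $-v_p(\boldsymbol{\xi})$ fixes the ratio of the two coefficients, the renormalized $L^2$ limit fixes the coefficient of $p^{(\alpha-n)k}$ as $\hat f(\boldsymbol{\xi})$, and the converse and uniform convergence come from the $L^1$ bound on the Fourier side. Your translation-invariance proof of the support property is a welcome refinement, since the paper invokes its support lemma, which is stated for $L^1$ layers, whereas the uniqueness class here consists of $L^2$ layers.

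One small misstatement: $D^{2\alpha-n}f$ is in general not in $\mathcal{S}(\mathbb{Q}_p^n)$. Its transform $\|\boldsymbol{\xi}\|_p^{2\alpha-n}\hat f(\boldsymbol{\xi})$ is not locally constant at $\boldsymbol{0}$ unless $\int f=0$, and $D^{2\alpha-n}f$ then decays only like $\|\boldsymbol{x}\|_p^{-2\alpha}$. This does not affect your proof, which only uses $\widehat{D^{2\alpha-n}f}\in L^1\cap L^2$, and that does hold.
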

\begin{proof}
Taking the Fourier transform of the bulk equation in
\eqref{eq:mass_square_term_extension} yields
\begin{align*}
    &(\hat{\varphi}_{k-1}(\boldsymbol{\xi})-\hat{\varphi}_{k}(\boldsymbol{\xi}))
    +\sum_{\boldsymbol{a}\in\mathbb{F}_p^n}
    \left(\hat{\varphi}_{k+1}(\boldsymbol{\xi})
    \chi_p(-p^k\boldsymbol{a}\cdot\boldsymbol{\xi})
    -\hat{\varphi}_{k}(\boldsymbol{\xi})\right)
    -m_p^2\hat{\varphi}_{k}(\boldsymbol{\xi})=0.
\end{align*}
Fix $\boldsymbol{\xi}\neq\boldsymbol{0}$. By Lemma \ref{lem:hat_varphi=0_k<-vp_xi}, all three layer transforms in this equation vanish when $k\leq-v_p(\boldsymbol{\xi})-2$. When $k=-v_p(\boldsymbol{\xi})-1$, the transforms $\hat\varphi_{k-1}(\boldsymbol{\xi})$ and $\hat\varphi_k(\boldsymbol{\xi})$ vanish, while
\begin{align*}
    \sum_{\boldsymbol{a}\in\mathbb{F}_p^n}
    \chi_p(-p^k\boldsymbol{a}\cdot\boldsymbol{\xi})=0.
\end{align*}
Hence the transformed equation also holds at this transition level. When $v_p(\boldsymbol{\xi})+k\geq0$, the same character sum equals $p^n$, and the equation becomes
\begin{align*}
    p^n\hat{\varphi}_{k+1}(\boldsymbol{\xi})
    -(p^{n-\alpha}+p^{\alpha})\hat{\varphi}_{k}(\boldsymbol{\xi})
    +\hat{\varphi}_{k-1}(\boldsymbol{\xi})=0.
\end{align*}
The associated characteristic equation is
\begin{align*}
    p^n\lambda^2-(p^{n-\alpha}+p^{\alpha})\lambda+1=0
    \quad\Longrightarrow\quad
    \lambda=p^{-\alpha}\ \text{or}\ p^{\alpha-n}.
\end{align*}
The general solution is therefore
\begin{align*}
    \hat{\varphi}_{k}(\boldsymbol{\xi})
    =C_1(\boldsymbol{\xi})p^{-\alpha k}
    +C_2(\boldsymbol{\xi})p^{(\alpha-n)k}.
\end{align*}
By Lemma \ref{lem:hat_varphi=0_k<-vp_xi}, $\hat{\varphi}_k(\boldsymbol{\xi})=0$ for $k<-v_p(\boldsymbol{\xi})$. At the first active level, $k=-v_p(\boldsymbol{\xi})$, we thus have
\begin{align*}
    p^n\hat{\varphi}_{-v_p(\boldsymbol{\xi})+1}(\boldsymbol{\xi})
    -(p^{n-\alpha}+p^{\alpha})
    \hat{\varphi}_{-v_p(\boldsymbol{\xi})}(\boldsymbol{\xi})=0.
\end{align*}
Substituting the general solution yields
\begin{align*}
p^n\left(
C_1(\boldsymbol{\xi})p^{\alpha(v_p(\boldsymbol{\xi})-1)}
+C_2(\boldsymbol{\xi})p^{-(\alpha-n)(v_p(\boldsymbol{\xi})-1)}
\right)=(p^{n-\alpha}+p^{\alpha})\left(
C_1(\boldsymbol{\xi})p^{\alpha v_p(\boldsymbol{\xi})}
+C_2(\boldsymbol{\xi})p^{-(\alpha-n)v_p(\boldsymbol{\xi})}
\right).
\end{align*}
After simplification, we obtain
\begin{align*}
    p^{\alpha}p^{\alpha v_p(\boldsymbol{\xi})}C_1(\boldsymbol{\xi})
    +p^{n-\alpha}p^{(n-\alpha)v_p(\boldsymbol{\xi})}C_2(\boldsymbol{\xi})=0,
\end{align*}
and hence
\begin{align*}
    C_1(\boldsymbol{\xi})
    &=-p^{n-2\alpha}p^{(n-2\alpha)v_p(\boldsymbol{\xi})}
      C_2(\boldsymbol{\xi})\\
    &=-p^{n-2\alpha}\|\boldsymbol{\xi}\|_p^{2\alpha-n}
      C_2(\boldsymbol{\xi}).
\end{align*}
Therefore,
\begin{align*}
    \hat{\varphi}_{k}(\boldsymbol{\xi})
    =C_2(\boldsymbol{\xi})\left(
    p^{(\alpha-n)k}
    -p^{n-2\alpha}\|\boldsymbol{\xi}\|_p^{2\alpha-n}p^{-\alpha k}
    \right),
\end{align*}
for $k\geq-v_p(\boldsymbol{\xi})$. On any fixed nonzero frequency shell, multiplication by $p^{(n-\alpha)k}$ gives
\begin{align*}
    p^{(n-\alpha)k}\hat{\varphi}_{k}(\boldsymbol{\xi})
    =\left(1-p^{-(k+1)(2\alpha-n)}
    \|\boldsymbol{\xi}\|_p^{2\alpha-n}\right)C_2(\boldsymbol{\xi}),
\end{align*}
for all sufficiently large $k$. Since the factor in parentheses tends to $1$,
restricting the prescribed $L^2$ boundary limit to this shell and letting
$k\to+\infty$ gives
\begin{align*}
    C_2(\boldsymbol{\xi})=\hat f(\boldsymbol{\xi}),
\end{align*}
for almost every $\boldsymbol{\xi}$ on the shell. This proves the formula on
every nonzero frequency shell.

Conversely, the formula yields
\begin{align*}
    p^{(n-\alpha)k}\hat{\varphi}_{k}(\boldsymbol{\xi})
    =\mathbf{1}_{\{\|\boldsymbol{\xi}\|_p\leq p^k\}}
    \left(1-p^{-(k+1)(2\alpha-n)}
    \|\boldsymbol{\xi}\|_p^{2\alpha-n}\right)
    \hat f(\boldsymbol{\xi}).
\end{align*}
The multiplier on the right lies between $0$ and $1$ and converges to $1$ almost everywhere. Plancherel's theorem and the dominated convergence theorem show that this formula defines an $L^2$ layer solution satisfying
\begin{align*}
    p^{(n-\alpha)k}\varphi_k\longrightarrow f
    \quad\text{in }L^2(\mathbb{Q}_p^n).
\end{align*}
Because $f\in \mathcal{S}(\mathbb{Q}_p^n)$, we also have
$\hat f\in L^1(\mathbb{Q}_p^n)$, and the multiplier expression converges
to $\hat f$ in $L^1$. Fourier inversion then gives uniform convergence
of the canonical representatives. The boundary limit determines $C_2$
on every nonzero frequency shell, while the vanishing of the preceding
layer determines $C_1$. The solution is therefore unique in the stated
class; the value at the singleton $\{\boldsymbol{0}\}$ is immaterial to
$L^2$ uniqueness.
\end{proof}

\begin{definition}
The scale-corrected normal derivative is defined by
\begin{align*}
    \partial_{n,\alpha}\varphi(\boldsymbol{x})
    =\lim_{k\to+\infty}
    \left(f(\boldsymbol{x})-p^{(n-\alpha)k}
    \varphi_k(\boldsymbol{x})\right)p^{(k+1)(2\alpha-n)},
\end{align*}
where the limit is taken in $L^2(\mathbb{Q}_p^n)$.
\end{definition}

Since $\hat f$ has compact support, the formula in Theorem \ref{thm:Fourier_transform_mass_square_term_extension} is valid on the support of $\hat f$ for all sufficiently large $k$. Hence
\begin{align*}
    \widehat{\partial_{n,\alpha}\varphi}(\boldsymbol{\xi})
    =&\lim_{k\to+\infty}
    \left(\hat f(\boldsymbol{\xi})
    -p^{(n-\alpha)k}\hat\varphi_k(\boldsymbol{\xi})\right)
    p^{(k+1)(2\alpha-n)}\\
    =&\lim_{k\to+\infty}
    p^{n-2\alpha}\|\boldsymbol{\xi}\|_p^{2\alpha-n}
    p^{(n-2\alpha)k}\hat f(\boldsymbol{\xi})
    p^{(k+1)(2\alpha-n)}\\
    =&\|\boldsymbol{\xi}\|_p^{2\alpha-n}
    \hat f(\boldsymbol{\xi}).
\end{align*}
Consequently,
\begin{align*}
    \partial_{n,\alpha}\varphi(\boldsymbol{x})
    =D^{2\alpha-n}f(\boldsymbol{x}).
\end{align*}

\begin{remark}
The extension methods of Sections 4 and 5 are equivalent after rescaling
each layer by $p^{(n-\alpha)k}$. Indeed, the Fourier formula for
$p^{(n-\alpha)k}\varphi_k$ coincides with the formula obtained in Section 4
for the weighted extension with $s=2\alpha-n$. Thus, modifying the edge
weights is equivalent to keeping the tree unweighted, introducing the
parameter $m_p^2$, and replacing the ordinary boundary trace by the
scale-corrected boundary limit.
\end{remark}

We conclude this section with the finite-cutoff boundary calculation. For
this purpose, we retain the exact solution through level $R$ and introduce
a single artificial ghost layer. Here
\begin{align*}
    \partial\mathcal{T}_{p^n}^R
    =\{v_R=(\boldsymbol{x},R):
    \boldsymbol{x}\in\mathbb{Q}_p^n/p^R\mathbb{Z}_p^n\}
\end{align*}
denotes the level-$R$ horocycle. It is countable and is not the boundary of a finite subtree. Accordingly, the vertex sums below are understood as $L^2$ pairings of level-$R$ step functions; their absolute convergence follows from the Cauchy--Schwarz inequality. For every $v_R\in\partial\mathcal{T}_{p^n}^R$, assign to each of its $p^n$ ghost children the value
\begin{align*}
    \varphi(v_{R+1})=p^{\alpha-n}\varphi(v_R).
\end{align*}
Denote the residual determined by this ghost layer at $v_R$ by
\begin{align*}
    \mathcal{R}_R\varphi(v_R)
    :=(\Delta-m_p^2)\varphi(v_R),
\end{align*}
where the right-hand side is evaluated using the artificial ghost children. We use the same notation for the corresponding level-$R$ step function on $\mathbb{Q}_p^n$. A direct calculation gives
\begin{align*}
    \mathcal{R}_R\varphi(v_R)
    =&p^{n}\varphi(v_{R+1})+\varphi(v_{R-1})
    -(p^n+1+m_p^2)\varphi(v_R)\\
    =&p^{n}p^{\alpha-n}\varphi(v_R)+\varphi(v_{R-1})
    -(p^{n-\alpha}+p^{\alpha})\varphi(v_R)\\
    =&\varphi(v_{R-1})-p^{n-\alpha}\varphi(v_R)\\
    =&\left(p^{(n-\alpha)(R-1)}\varphi(v_{R-1})
    -p^{(n-\alpha)R}\varphi(v_R)\right)
    p^{(\alpha-n)(R-1)}\\
    =&\left(\left(f(\boldsymbol{x})
    -p^{(n-\alpha)R}\varphi(v_R)\right)
    -\left(f(\boldsymbol{x})
    -p^{(n-\alpha)(R-1)}\varphi(v_{R-1})\right)\right)
    p^{(\alpha-n)(R-1)}\\
    =&p^{-\alpha R}\Bigg(
    p^{2n-3\alpha}
    \left(f(\boldsymbol{x})-p^{(n-\alpha)R}\varphi(v_R)\right)
    p^{(R+1)(2\alpha-n)}\\
    &\hspace{36mm}
    -p^{n-\alpha}
    \left(f(\boldsymbol{x})
    -p^{(n-\alpha)(R-1)}\varphi(v_{R-1})\right)
    p^{R(2\alpha-n)}\Bigg).
\end{align*}
This identity holds for every finite $R$.

Every level-$R$ function is constant on balls of Haar measure $p^{-nR}$. The preceding identity therefore gives
\begin{align*}
    &\sum_{v_R\in\partial\mathcal{T}_{p^n}^{R}}
    \varphi(v_R)\mathcal{R}_R\varphi(v_R)\\
    =&p^{nR}\int_{\mathbb{Q}_p^n}
    \varphi_R(\boldsymbol{x})\mathcal{R}_R\varphi_R(\boldsymbol{x})
    d\boldsymbol{x}\\
    =&\int_{\mathbb{Q}_p^n}p^{(n-\alpha)R}
    \varphi_R(\boldsymbol{x})\Bigg[
    p^{2n-3\alpha}
    \left(f(\boldsymbol{x})-p^{(n-\alpha)R}
    \varphi_R(\boldsymbol{x})\right)p^{(R+1)(2\alpha-n)}\\
    &\hspace{39mm}
    -p^{n-\alpha}
    \left(f(\boldsymbol{x})-p^{(n-\alpha)(R-1)}
    \varphi_{R-1}(\boldsymbol{x})\right)p^{R(2\alpha-n)}
    \Bigg]d\boldsymbol{x}.
\end{align*}
Theorem \ref{thm:Fourier_transform_mass_square_term_extension} implies
\begin{align*}
    &p^{(n-\alpha)R}\widehat{\varphi_R}(\boldsymbol{\xi})
    =\mathbf{1}_{\{\|\boldsymbol{\xi}\|_p\leq p^R\}}
    \left(1-p^{-(R+1)(2\alpha-n)}
    \|\boldsymbol{\xi}\|_p^{2\alpha-n}\right)
    \hat f(\boldsymbol{\xi}),\\
    &p^{(R+1)(2\alpha-n)}\left(f-p^{(n-\alpha)R}\varphi_R\right)^{\widehat{}}(\boldsymbol{\xi})
    =\min\left\{\|\boldsymbol{\xi}\|_p^{2\alpha-n},
    p^{(R+1)(2\alpha-n)}\right\}\hat f(\boldsymbol{\xi}).
\end{align*}
The analogous formula at $R-1$ has multiplier
\begin{align*}
    \min\left\{\|\boldsymbol{\xi}\|_p^{2\alpha-n},
    p^{R(2\alpha-n)}\right\}.
\end{align*}
The vertex series is absolutely convergent by the Cauchy--Schwarz
inequality. The Fourier transform of
$p^{(n-\alpha)R}\varphi_R$ is supported in
$\{\|\boldsymbol{\xi}\|_p\leq p^R\}$. Since the $p$-adic norm takes
values in $p^{\mathbb{Z}}\cup\{0\}$, both normal-derivative multipliers
equal $\|\boldsymbol{\xi}\|_p^{2\alpha-n}$ on this support.
Plancherel's theorem therefore yields the exact identity
\begin{align*}
    \sum_{v_R\in\partial\mathcal{T}_{p^n}^{R}}
    \varphi(v_R)\mathcal{R}_R\varphi(v_R)=p^{n-2\alpha}(p^{n-\alpha}-p^{\alpha})
    \int_{\|\boldsymbol{\xi}\|_p\leq p^R}
    \left(1-p^{-(R+1)(2\alpha-n)}
    \|\boldsymbol{\xi}\|_p^{2\alpha-n}\right)
    \|\boldsymbol{\xi}\|_p^{2\alpha-n}
    |\hat f(\boldsymbol{\xi})|^2d\boldsymbol{\xi}.
\end{align*}
Since $f\in \mathcal{S}(\mathbb{Q}_p^n)$, the dominated convergence theorem gives
\begin{align*}
    \lim_{R\to+\infty}
    \sum_{v_R\in\partial\mathcal{T}_{p^n}^{R}}
    \varphi(v_R)\mathcal{R}_R\varphi(v_R)=&p^{n-2\alpha}(p^{n-\alpha}-p^{\alpha})
    \int_{\mathbb{Q}_p^n}
    \|\boldsymbol{\xi}\|_p^{2\alpha-n}
    |\hat f(\boldsymbol{\xi})|^2d\boldsymbol{\xi}\\
    =&p^{n-2\alpha}(p^{n-\alpha}-p^{\alpha})
    \int_{\mathbb{Q}_p^n}f(\boldsymbol{x})
    D^{2\alpha-n}f(\boldsymbol{x})d\boldsymbol{x}.
\end{align*}
This expression is the renormalized boundary functional associated with
the ghost-layer residual. Its identification with a full on-shell bulk
action would additionally require a one-sided Green identity for the
massive equation, an explicit counterterm, and convergence of the bulk and boundary series.

Table \ref{tab:Caffarelli–Silvestre extension and non-Archimedean extension} summarizes the structural parallels between the classical Caffarelli--Silvestre extension and the non-Archimedean tree extensions considered here.

\begin{table}[h!]
    \centering
    \begin{tabular}{|c|c|c|}
        \hline
        &Caffarelli--Silvestre extension& Non-Archimedean tree extensions \\ \hline
        Boundary space & $\mathbb{R}^n$ & $\mathbb{Q}_p^n$\\ \hline
        Bulk space & $\mathbb{R}_{+}^{n+1}$ & $\mathcal{T}_{p^n}$\\ \hline
        Bulk coordinates & $(\boldsymbol{x},y),\ y>0$ & $(\boldsymbol{x},k),\ k\in\mathbb{Z}$\\ \hline
        Extension equation & $\nabla\cdot(y^{1-2s}\nabla\varphi)=0$, $0<s<1$ & $\Delta_w\varphi=0$ or $(\Delta-m_p^2)\varphi=0$\\ \hline
        Boundary limit & $\displaystyle\lim_{y\to0^+}\varphi$ & $\displaystyle\lim_{k\to+\infty}\varphi_k$ or $\displaystyle\lim_{k\to+\infty}p^{k(n-\alpha)}\varphi_k$\\ \hline
        Normal derivative & $-C_{n,s}\displaystyle\lim_{y\to0^+}y^{1-2s}\varphi_y$ & $\partial_{n,s}^{(p)}\varphi$ or $\partial_{n,\alpha}\varphi$\\ \hline
        Nonlocal operator & $(-\Delta)^s$ & $D^s$ or $D^{2\alpha-n}$\\ \hline
    \end{tabular}
    \caption{Comparison of the Caffarelli--Silvestre extension with the
    non-Archimedean tree extensions}
    \label{tab:Caffarelli–Silvestre extension and non-Archimedean extension}
\end{table}
\section{Extension problems for the hierarchical Laplacian}
In this section, we prove Theorem
\ref{thm:Extension_problems_related_to_the_hierarchical_Laplacian}. We retain
the notation introduced in the preliminaries. In particular, $B'$ denotes
the unique immediate super-ball of $B$, and
$\text{sub}(B)$ denotes the finite collection of maximal proper
sub-balls of $B$.

Consider
\begin{align}
\label{eq:Extension_problem_TX}
    \begin{cases}
        \Delta_W\varphi(v_B)=0,&v_B\in\mathcal{T}_X;\\
        \varphi|_{\partial\mathcal{T}_X}(x)=f(x),&x\in X;\\
        \varphi(\infty)=0.&
    \end{cases}
\end{align}
Here $W(A,B)=W(B,A)>0$ for adjacent balls $A$ and $B$, and
\begin{align}
\label{eq:weighted_Laplacian_TX}
    \Delta_W\varphi(v_B)
    =W(B,B')\bigl(\varphi(v_{B'})-\varphi(v_B)\bigr)+\sum_{A\in\text{sub}(B)}
    W(A,B)\bigl(\varphi(v_A)-\varphi(v_B)\bigr).
\end{align}
Let
\begin{align}
\label{eq:flux_at_B}
    \Phi(B)=W(B,B')\bigl(\varphi(v_B)-\varphi(v_{B'})\bigr).
\end{align}
Then \eqref{eq:weighted_Laplacian_TX} can be rewritten as
\begin{align*}
    \Phi(B)=\sum_{A\in\text{sub}(B)}\Phi(A).
\end{align*}

This identity yields finite additivity on the algebra of ball cylinders.
Finite additivity alone, however, does not imply countable additivity,
finite variation, or absolute continuity with respect to $m$. We therefore
work with the following natural class. An admissible solution of
\eqref{eq:Extension_problem_TX} is a harmonic function $\varphi$ such that
$\varphi(\infty)=0$, its boundary trace exists in $L^2(X,m)$, and there is a
function $g_\varphi\in \mathcal{S}_0(X)$ satisfying
\begin{align}
\label{eq:the_measure_recovery_formula}
    \Phi(B)=\int_B g_\varphi(y)\,dm(y),
    \qquad B\in\mathcal B.
\end{align}
The density $g_\varphi$ is unique. Indeed, if two locally constant
$L^2$-functions have the same integral over every ball, then every ball
average of their difference vanishes. Taking balls sufficiently fine that
the difference is constant on each of them shows that the difference is
zero.

Conversely, let $g\in \mathcal{S}_0(X)$ and suppose that the positive edge weight $W$
satisfies, along every boundary chain,
\begin{align*}
    \sum_{j=k}^{+\infty}
    \frac{m(B_j(x))}{W(B_j(x),B_{j-1}(x))}<+\infty,
\end{align*}
with the sum locally bounded in $x$ for each fixed $k$. Define
\begin{align*}
    \varphi_g(v_B)=\sum_{D\supseteq B}
    \frac{1}{W(D,D')}\int_Dg(y)\,dm(y).
\end{align*}
For each fixed $B$, only finitely many terms in the ancestor sum are
nonzero, because the integral of $g$ vanishes on every sufficiently large
super-ball. The summability condition above controls the additional descendant terms as
$B$ tends to a boundary point. Moreover,
\begin{align*}
    W(B,B')\bigl(\varphi_g(v_B)-\varphi_g(v_{B'})\bigr)
    =\int_Bg(y)\,dm(y).
\end{align*}
Since the sub-balls of $B$ form a finite measurable partition of $B$, these
fluxes satisfy the conservation law; hence
$\Delta_W\varphi_g(v_B)=0$. The same formula also shows that
$\varphi_g(\infty)=0$.

For an admissible solution, \eqref{eq:the_measure_recovery_formula} gives
\begin{align}
\label{eq:gx}
    \frac{\Phi(B_k(x))}{m(B_k(x))}
    =&\frac{W(B_k,B_{k-1})}{m(B_k)}
    \bigl(\varphi(v_{B_k})-\varphi(v_{B_{k-1}})\bigr)\notag\\
    =&\frac{1}{m(B_k(x))}\int_{B_k(x)}g_\varphi(y)\,dm(y)
    \longrightarrow g_\varphi(x).
\end{align}
For $g_\varphi\in \mathcal{S}_0(X)$, the final convergence is especially simple.
Since $g_\varphi$ is locally constant and compactly supported, there is
$k_0$ such that it is constant on every ball of height at least $k_0$.
Thus every ball average in \eqref{eq:gx} equals $g_\varphi(x)$ for
sufficiently large $k$.

\subsection{Green's identity and the deformed normal derivative at the boundary}
\begin{definition}[Deformed normal derivative on $\partial\mathcal T_X\setminus\{\infty\}$]
\label{def:The_deformed_normal_derivative_TX}
    Suppose that
    $\cdots\supseteq B_{k-1}(x)\supseteq B_k(x)
    \supseteq B_{k+1}(x)\supseteq\cdots\ni x$ and that
    $\varphi(v_{B_k(\,\cdot\,)})\to f$ in $L^2(X,m)$. The deformed normal
    derivative at $x\in\partial\mathcal T_X\setminus\{\infty\}\cong X$ is
    defined, whenever the limit exists in $L^2(X,m)$, by
    \begin{align*}
        \partial_n^{(W)}\varphi(x)
        =\lim_{k\to+\infty}
        \frac{W(B_k,B_{k-1})}{m(B_k)}
        \bigl(f(x)-\varphi(v_{B_{k-1}})\bigr).
    \end{align*}
\end{definition}

\begin{lemma}
\label{lem:relation_partialn_and_g}
    Suppose that
    \begin{align*}
        \frac{W(B_k,B_{k-1})}{m(B_k)}
        \sum_{j=k}^{+\infty}
        \frac{m(B_j)}{W(B_j,B_{j-1})}
        \longrightarrow\kappa_W\in(0,+\infty)
    \end{align*}
    locally uniformly in $x$, where the limit $\kappa_W$ is independent of
    $x$. Then every admissible solution has an $L^2$ normal derivative, and
    \begin{align*}
         \partial_n^{(W)}\varphi=\kappa_Wg_\varphi.
    \end{align*}
\end{lemma}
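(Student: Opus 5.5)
We express the boundary gap $f(x)-\varphi(v_{B_{k-1}})$ as a telescoping series of fluxes along the chain, and then exploit that $g_\varphi$ is eventually constant on small balls. For an admissible solution, \eqref{eq:flux_at_B} and \eqref{eq:the_measure_recovery_formula} give, for every $j$,
\begin{align*}
    \varphi(v_{B_j})-\varphi(v_{B_{j-1}})=\frac{\Phi(B_j)}{W(B_j,B_{j-1})}=\frac{1}{W(B_j,B_{j-1})}\int_{B_j}g_\varphi\,dm.
\end{align*}
Summing over $j\geq k$ should yield $f(x)-\varphi(v_{B_{k-1}})=\sum_{j\geq k}\Phi(B_j)/W(B_j,B_{j-1})$.

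The first step is to justify this summation pointwise. Since $g_\varphi\in\mathcal S_0(X)$, there is a height $k_0$ such that $g_\varphi$ is constant on every ball of height at least $k_0$. Hence $\Phi(B_j(x))=m(B_j(x))g_\varphi(x)$ for $j\geq k_0$. The hypothesis of the lemma implies $\sum_{j\geq k} m(B_j)/W(B_j,B_{j-1})<\infty$ for large $k$, so the telescoping series converges absolutely. Its partial sums are $\varphi(v_{B_J})-\varphi(v_{B_{k-1}})$, and therefore $\varphi(v_{B_J}(x))$ converges pointwise as $J\to\infty$. Since the trace is assumed to exist in $L^2$, a subsequence converges a.e. to $f$, so the pointwise limit equals $f(x)$ for a.e.\ $x$. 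We take this identification as the definition of the representative of $f$.

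With this, for $k\geq k_0$ we obtain the exact identity
\begin{align*}
    \frac{W(B_k,B_{k-1})}{m(B_k)}\bigl(f(x)-\varphi(v_{B_{k-1}})\bigr)=g_\varphi(x)\,\frac{W(B_k,B_{k-1})}{m(B_k)}\sum_{j=k}^{+\infty}\frac{m(B_j)}{W(B_j,B_{j-1})}.
\end{align*}
The factor multiplying $g_\varphi(x)$ converges to $\kappa_W$ locally uniformly by hypothesis. Because $g_\varphi$ is bounded with compact support $K$, the convergence $q_k g_\varphi\to\kappa_W g_\varphi$ is uniform on $K$, and both sides vanish off $K$ for every $k\geq k_0$. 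Since $m(K)<\infty$, uniform convergence on $K$ implies $L^2$ convergence. The left-hand side therefore converges to $\kappa_W g_\varphi$ in $L^2(X,m)$, and Definition \ref{def:The_deformed_normal_derivative_TX} gives $\partial_n^{(W)}\varphi=\kappa_W g_\varphi$.

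The main obstacle is identifying the limit of the telescoping series with the $L^2$ trace $f$, which is needed to replace $\varphi(v_{B_J})$ by $f$. The plan is to handle it by the a.e.\ subsequence argument above, using local uniformity to control $\varphi(v_{B_J})\to f$ uniformly on $K$. Off $K$ we have $g_\varphi=0$, so the ancestor representation gives $\varphi(v_B)$ constant on descendants there and no extra error. A secondary point is that the vanishing of the ancestor terms for large balls uses the zero-mean property of $g_\varphi$, consistent with $\varphi(\infty)=0$.
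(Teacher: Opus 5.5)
Your proof is correct and follows the same route as the paper. You telescope the flux increments along the chain and use that $g_\varphi$ is constant on every ball of height $\geq k_0$, which gives the exact identity $\frac{W(B_k,B_{k-1})}{m(B_k)}\bigl(f(x)-\varphi(v_{B_{k-1}})\bigr)=q_k(x)\,g_\varphi(x)$, where $q_k$ is the hypothesis coefficient. You then pass to the $L^2$ limit using uniform convergence of $q_k$ on the compact support. Your a.e.\ subsequence argument, which identifies the pointwise limit of the telescoping series with the $L^2$ trace $f$, fills in a step the paper leaves implicit; the extra remark about uniform convergence of $\varphi(v_{B_J})$ on $K$ is not needed.
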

\begin{proof}
    By telescoping the edge increments and using
    \eqref{eq:the_measure_recovery_formula}, we obtain
    \begin{align*}
        \frac{W(B_k,B_{k-1})}{m(B_k)}
        \bigl(f(x)-\varphi(v_{B_{k-1}})\bigr)
        =&\frac{W(B_k,B_{k-1})}{m(B_k)}
        \sum_{j=k}^{+\infty}
        \bigl(\varphi(v_{B_j})-\varphi(v_{B_{j-1}})\bigr)\\
        =&\frac{W(B_k,B_{k-1})}{m(B_k)}
        \sum_{j=k}^{+\infty}
        \frac{\Phi(B_j)}{W(B_j,B_{j-1})}\\
        =&\frac{W(B_k,B_{k-1})}{m(B_k)}
        \sum_{j=k}^{+\infty}
        \frac{m(B_j)}{W(B_j,B_{j-1})}
        \left(\frac{1}{m(B_j)}\int_{B_j}g_\varphi(y)\,dm(y)\right).
    \end{align*}
    Choose a ball $P$ containing $\operatorname{supp}g_\varphi$. Since
    $g_\varphi$ is locally constant and $P$ is compact, a finite refinement
    by balls yields a height $k_0$ such that $g_\varphi$ is constant on every
    ball of height $j\geq k_0$ that meets $P$.
    Thus, for $k\geq k_0$, the last expression is exactly
    \begin{align*}
        \frac{W(B_k,B_{k-1})}{m(B_k)}
        \sum_{j=k}^{+\infty}
        \frac{m(B_j)}{W(B_j,B_{j-1})}g_\varphi(x).
    \end{align*}
    For the same values of $k$, this expression vanishes outside $P$, since
    every ball in the fine-scale tail is disjoint from
    $\operatorname{supp}g_\varphi$. The locally uniform convergence of the
    coefficient on $P$ therefore gives convergence to
    $\kappa_Wg_\varphi$ in $L^2(X,m)$.
\end{proof}

We next derive Green's identity in the hierarchical coordinate system used
to define the ultrametric tree. For $R\in\mathbb Z$, consider the truncated
tree
\begin{align*}
    \mathcal{T}_{X}^R
    =&\{v_B\in\mathcal{T}_{X}:h(B)\leq R\},\\
    \partial\mathcal{T}_{X}^R
    =&\{v_B\in\mathcal{T}_{X}:h(B)=R\}.
\end{align*}
Here $h(B)$ is the Busemann height introduced above, so $\mathcal T_X^R$ is
the horocyclic truncation determined by the inclusion levels of ultrametric
balls. For each $x\in X$, let $B_R(x)$ denote
the unique ball of height $R$ containing $x$. The balls of height $R$ form a
disjoint partition of $X$. Although $\mathcal T_X^R$ need not be finite,
Green's identity remains valid under the following absolute convergence
assumption.

\begin{lemma}[Green's first identity for the truncated tree $\mathcal{T}_{X}^R$]
\label{lem:Green's first identity for the truncated tree TXR}
Let $g,h$ be functions on the vertices of $\mathcal T_X$, and assume that
\begin{align*}
    \sum_{v_A\in\mathcal T_X^R}|g(v_A)|
    \sum_{v_B\sim v_A}W(A,B)|h(v_A)-h(v_B)|<+\infty.
\end{align*}
Then
    \begin{align*}
        &\frac{1}{2}
        \sum_{\substack{v_A,v_B\in \mathcal{T}_{X}^R\\v_A\sim v_B}}
        W(A,B)\bigl(g(v_A)-g(v_B)\bigr)
        \bigl(h(v_A)-h(v_B)\bigr)\\
        =&-\sum_{v_A\in \mathcal{T}_{X}^R}
        g(v_A)\Delta_Wh(v_A)-\sum_{v_A\in\partial \mathcal{T}_{X}^R}
        g(v_A)\sum_{\substack{v_B\notin\mathcal T_X^R\\v_B\sim v_A}}
        \bigl(h(v_A)-h(v_B)\bigr)W(A,B).
    \end{align*}
\end{lemma}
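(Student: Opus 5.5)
The plan follows the proof of Lemma \ref{lem:Green's first identity for the truncated tree} for $\mathcal{T}_{p^n}^R$; only the tree changes. By definition \eqref{eq:weighted_Laplacian_TX},
\begin{align*}
    -\sum_{v_A\in\mathcal T_X^R}g(v_A)\Delta_Wh(v_A)
    =\sum_{v_A\in\mathcal T_X^R}g(v_A)\sum_{v_B\sim v_A}W(A,B)\bigl(h(v_A)-h(v_B)\bigr).
\end{align*}
The hypothesis states exactly that this double series converges absolutely. We may therefore regroup its terms arbitrarily, and in particular split it into sub-series indexed by edges.

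We partition the pairs $(v_A,v_B)$ with $v_A\in\mathcal T_X^R$ and $v_B\sim v_A$ into two classes. Since $h(B)-h(B')=1$, each neighbour of $v_A$ has height $h(A)\pm1$. Hence $v_B\notin\mathcal T_X^R$ can occur only when $h(A)=R$ and $B\in\text{sub}(A)$, which means $v_A\in\partial\mathcal T_X^R$. Every other pair is an interior edge with both endpoints in $\mathcal T_X^R$.

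For interior edges, the edge $\{v_A,v_B\}$ appears twice in the vertex sum, once for each orientation. Using $W(A,B)=W(B,A)$, the two terms combine to
\begin{align*}
    g(v_A)\bigl(h(v_A)-h(v_B)\bigr)W(A,B)+g(v_B)\bigl(h(v_B)-h(v_A)\bigr)W(A,B)
    =W(A,B)\bigl(g(v_A)-g(v_B)\bigr)\bigl(h(v_A)-h(v_B)\bigr).
\end{align*}
Summing over interior edges gives the symmetric edge pairing. The factor $\tfrac12$ appears because that pairing is written as a sum over ordered adjacent pairs.

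For boundary edges, each edge from $v_A\in\partial\mathcal T_X^R$ to a child outside the truncation appears only once in the vertex sum. These terms make up the boundary sum. Moving the boundary sum to the other side of the equation gives the identity.

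The main obstacle is the regrouping step, because $\mathcal T_X^R$ may be infinite. Pairing the two orientations of an edge and separating boundary edges both rearrange the original series. This is legitimate only because of absolute convergence. I would point out that the interior-edge series is bounded termwise by the sum of the absolute values of its two oriented terms, so it also converges absolutely. Each vertex has finite degree (finitely many sub-balls plus one parent), so every inner sum is finite. Fubini for counting measure then justifies all the rearrangements.
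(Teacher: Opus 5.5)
Your proposal is correct and follows the paper's proof essentially step for step. You expand $-\sum g\,\Delta_W h$ as an oriented vertex--neighbour sum and use the assumed absolute convergence to regroup it edge by edge. You then combine the two orientations of each interior edge into the symmetric pairing, which accounts for the factor $\tfrac12$, and use the Busemann-height normalization to see that the only edges leaving $\mathcal T_X^R$ join a height-$R$ vertex to one of its children, each counted once. Your explicit termwise bound showing that the interior-edge series is itself absolutely convergent slightly sharpens the paper's one-line appeal to absolute convergence.
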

\begin{proof}
    Expanding the full weighted Laplacian gives
    \begin{align*}
        -\sum_{v_A\in \mathcal{T}_{X}^R}g(v_A)\Delta_Wh(v_A)
        =\sum_{v_A\in \mathcal{T}_{X}^R}g(v_A)\sum_{v_B\sim v_A}W(A,B)\bigl(h(v_A)-h(v_B)\bigr).
    \end{align*}
    Absolute convergence allows us to group the terms edge by edge. If
    $v_A,v_B\in\mathcal T_X^R$, the two oriented contributions
    corresponding to the interior edge $\{v_A,v_B\}$ add to
    \begin{align*}
        W(A,B)\bigl(g(v_A)-g(v_B)\bigr)
        \bigl(h(v_A)-h(v_B)\bigr).
    \end{align*}
    Since the displayed double sum over adjacent pairs counts each interior
    edge twice, the contribution from the interior edges is the first term
    in the statement.
    If $v_A\in\mathcal T_X^R$ and $v_B\notin\mathcal T_X^R$, then the
    Busemann heights of $A$ and $B$ are $R$ and $R+1$, respectively, and
    $B\in\text{sub}(A)$. Such an edge occurs only once and contributes
    \begin{align*}
        g(v_A)W(A,B)\bigl(h(v_A)-h(v_B)\bigr).
    \end{align*}
    These are precisely the boundary terms in the statement. Moving them
    to the other side proves the identity.
\end{proof}

\begin{theorem}[Green's first identity for $\mathcal{T}_{X}$]
\label{thm:Green's_first_identity_TX}
    Let $h$ be an admissible solution with flux density $g_h$, and assume
    the hypotheses of Lemma \ref{lem:relation_partialn_and_g}. Let $g$ be a
    vertex function and define its height-$R$ boundary step function by
    \begin{align*}
        g_R(x)=g(v_{B_R(x)}),\qquad x\in X.
    \end{align*}
    Suppose that, for every compact open ball $P$, $g_R$ converges in
    $L^2(P,m)$ to a boundary function, again denoted by $g(x)$, and that the
    weighted edge pairing below is absolutely convergent. Then
    \begin{align*}
        &\frac{1}{2}
        \sum_{\substack{v_A,v_B\in \mathcal{T}_{X}\\v_A\sim v_B}}
        W(A,B)\bigl(g(v_A)-g(v_B)\bigr)
        \bigl(h(v_A)-h(v_B)\bigr)\\
        =&-\sum_{v_A\in \mathcal{T}_{X}}
        g(v_A)\Delta_Wh(v_A)
        +\frac{1}{\kappa_W}\int_Xg(x)\partial_n^{(W)}h(x)\,dm(x).
    \end{align*}
\end{theorem}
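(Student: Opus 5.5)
The plan is to mimic the proof of Theorem \ref{thm:Green's first identity for Tpn}: apply Lemma \ref{lem:Green's first identity for the truncated tree TXR} on $\mathcal T_X^R$, identify the upper boundary term as an integral over $X$, and let $R\to+\infty$. Since $h$ is harmonic, $\Delta_Wh(v_A)=0$ at every vertex. The bulk term therefore vanishes identically, and no summability hypothesis is needed for it. The absolute convergence required in the truncated lemma comes from the assumed absolute convergence of the weighted edge pairing, together with the fact that the flux of $h$ is a finite combination of ball integrals of $g_h\in\mathcal S_0(X)$. Thus $h(v_A)-h(v_B)$ is nonzero only on edges lying below the finitely many height levels meeting $\operatorname{supp} g_h$, plus those inside a fixed ball $P\supseteq\operatorname{supp} g_h$. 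The interior edge sum over $\mathcal T_X^R$ then increases to the full edge pairing by dominated convergence.

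The key step is the boundary term. For $A$ of height $R$ and $B\in\mathrm{sub}(A)$, flux conservation gives
\begin{align*}
-\sum_{B\in\mathrm{sub}(A)}W(A,B)\bigl(h(v_A)-h(v_B)\bigr)=\sum_{B\in\mathrm{sub}(A)}\Phi(B)=\Phi(A)=\int_Ag_h\,dm .
\end{align*}
Hence the boundary term equals
\begin{align*}
\sum_{h(A)=R}g(v_A)\int_Ag_h\,dm=\int_Xg_R(x)g_h(x)\,dm(x),
\end{align*}
which is exact for every $R$. This is simpler than the $p$-adic case, because the flux identity replaces the averaging argument.

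Next, we pass to the limit. Choose a compact ball $P\supseteq\operatorname{supp}g_h$. Then $\int_Xg_Rg_h\,dm=\int_Pg_Rg_h\,dm$, and $g_R\to g$ in $L^2(P,m)$ while $g_h\in L^2(P,m)$, so Cauchy--Schwarz gives convergence to $\int_Xg\,g_h\,dm$. Finally, Lemma \ref{lem:relation_partialn_and_g} gives $g_h=\kappa_W^{-1}\partial_n^{(W)}h$, which produces the factor $1/\kappa_W$. The bulk term is $0$ in this case, but it is kept in the statement so that the identity has the same form as Theorem \ref{thm:Green's first identity for Tpn}.

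The main obstacle is the justification that the truncated absolute-convergence hypothesis holds for each $R$ and that the interior sums converge. I would handle this by noting that every edge $\{B,B'\}$ with $B$ disjoint from $P$ and $B'\not\supseteq P$ carries $\Phi(B)=0$. So only edges inside $P$, and the finitely many ancestors of $P$ up to the height where $\int g_h=0$, contribute to the $h$-increments. Above that height the increments also vanish, since $\int_D g_h\,dm=0$ for $D\supseteq P$. Together with the assumed absolute convergence of the edge pairing, this reduces everything to sums over subtrees of $P$, where the $L^2(P)$ convergence applies.
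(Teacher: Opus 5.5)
Your proposal is correct and follows essentially the same route as the paper: it applies Lemma \ref{lem:Green's first identity for the truncated tree TXR}, uses flux conservation to write the boundary term exactly as $\int_X g_R g_h\,dm$, passes to the limit by Cauchy--Schwarz on a ball $P\supseteq\operatorname{supp}g_h$, and then uses Lemma \ref{lem:relation_partialn_and_g} to replace $g_h$ by $\kappa_W^{-1}\partial_n^{(W)}h$. One tightening: every ancestor $D\supseteq P$ already satisfies $\int_D g_h\,dm=\int_X g_h\,dm=0$, so there is no height threshold among the ancestors. The only edges with nonzero $h$-increment have their lower ball strictly inside $P$, and finite branching then leaves only finitely many such edges incident to each $\mathcal T_X^R$; this is exactly how the paper verifies the absolute-convergence hypothesis of the truncated lemma.
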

\begin{proof}
    Choose a ball $P$ containing $\operatorname{supp}g_h$. If a ball $B$
    is not contained in $P$, then either $B\cap P=\varnothing$ or
    $B\supseteq P$. In the first case $\int_Bg_h\,dm=0$, while in the
    second case
    \begin{align*}
        \int_Bg_h\,dm=\int_Pg_h\,dm=\int_Xg_h\,dm=0.
    \end{align*}
    Write
    \begin{align*}
        \Phi_h(B):=W(B,B')\bigl(h(v_B)-h(v_{B'})\bigr).
    \end{align*}
    It follows that $\Phi_h(B)=0$ whenever
    $B\not\subseteq P$. For a fixed $R$, the balls $B\subseteq P$ with
    $h(B)\leq R+1$ occupy only finitely many generations below $P$. By
    finite branching, only finitely many edges incident to
    $\mathcal T_X^R$ have nonzero increments, including the child edges
    outside the truncation. Thus Lemma
    \ref{lem:Green's first identity for the truncated tree TXR} applies to
    the global height truncation.

    Harmonicity and \eqref{eq:the_measure_recovery_formula} give, for every
    $v_A\in\partial\mathcal T_X^R$,
    \begin{align*}
        \sum_{D\in\text{sub}(A)}\Phi_h(D)
        =\Phi_h(A)=\int_Ag_h(x)\,dm(x).
    \end{align*}
    The boundary term in the truncated Green identity is therefore
    \begin{align*}
        -\sum_{v_A\in\partial\mathcal T_X^R}g(v_A)
        \sum_{\substack{v_B\notin\mathcal T_X^R\\v_B\sim v_A}}
        \bigl(h(v_A)-h(v_B)\bigr)W(A,B)=&\sum_{v_A\in\partial\mathcal T_X^R}g(v_A)\sum_{D\in\text{sub}(A)}\Phi_h(D)\\
        =&\sum_{v_A\in\partial\mathcal T_X^R}
        g(v_A)\int_Ag_h(x)\,dm(x)\\
        =&\int_Xg_R(x)g_h(x)\,dm(x).
    \end{align*}
    Since $g_h$ is supported in $P$ and $g_R\to g$ in $L^2(P,m)$, the
    Cauchy--Schwarz inequality yields
    \begin{align*}
        \int_Xg_R(x)g_h(x)\,dm(x)
        \longrightarrow\int_Xg(x)g_h(x)\,dm(x)
        =\frac{1}{\kappa_W}\int_Xg(x)\partial_n^{(W)}h(x)\,dm(x),
    \end{align*}
    where the last equality follows from Lemma
    \ref{lem:relation_partialn_and_g}. Absolute convergence permits passage
    to the limit in the interior-edge term as $R\to+\infty$. The bulk sum
    vanishes term by term because $h$ is harmonic; it is retained in the
    statement only to display the usual form of Green's first identity.
    Letting $R\to+\infty$ proves the result.
\end{proof}

\subsection{\texorpdfstring{Isotropic operators and bijectivity on $\mathcal{S}_0(X)$}{Isotropic operators and bijectivity on S0(X)}}
\begin{definition}[Isotropic operator]
    Let $T$ be a linear operator defined on $\mathcal{S}(X)$. If $T$ commutes with
    every isometry $\tau$ of the ultrametric space $X$, that is,
    \begin{align*}
        T(f\circ\tau)=(Tf)\circ\tau,
    \end{align*}
    for all $f\in \mathcal{S}(X)$ and all isometries $\tau$, then $T$ is called a
    globally isotropic operator on $\mathcal{S}(X)$.
\end{definition}

\begin{theorem}
\label{thm:isotropic_operators}
    Let $\tau$ be a measure-preserving isometry such that
    \begin{align*}
        C(\tau(B))=C(B),
        \qquad
        W(\tau(A),\tau(B))=W(A,B)
    \end{align*}
    for every ball $B$ and every adjacent pair $A,B$. Then $L_C$ is
    equivariant under $\tau$, and the normal derivative of every admissible
    extension transforms equivariantly. If the admissible Dirichlet
    extension is unique, the corresponding Dirichlet-to-Neumann operator is
    equivariant under $\tau$. In particular, these operators are globally
    isotropic if the stated conditions hold for every isometry.
\end{theorem}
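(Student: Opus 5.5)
The proof reduces to transporting each defining object along $\tau$ and checking that nothing in the construction depends on more than the ball structure, the measure $m$, and the functions $C$ and $W$.

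\textbf{Step 1: $\tau$ acts on the tree.} Since $\tau$ is an isometry of the ultrametric space $X$, it maps balls to balls. Because $\tau^{-1}$ is also an isometry, $\tau$ preserves inclusion in both directions. It therefore sends immediate super-balls to immediate super-balls, $\tau(B')=\tau(B)'$, and sends $\operatorname{sub}(B)$ bijectively onto $\operatorname{sub}(\tau(B))$. Consequently $\tau$ induces a graph automorphism of $\mathcal{T}_X$ that fixes the end $\infty$ and shifts Busemann height by a constant. It also maps the chain $B_k(x)$ onto the chain $B_{k+c}(\tau x)$, where $c$ is that constant.

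\textbf{Step 2: $L_C$ is equivariant.} Take $f\in\mathcal S(X)$ and evaluate $L_C(f\circ\tau)(x)$ from its defining series. Reindex the sum by $D=\tau(B)$, so that $B\ni x$ becomes $D\ni\tau x$. Measure preservation gives $\frac{1}{m(B)}\int_B f\circ\tau\,dm=\frac{1}{m(D)}\int_D f\,dm$, and the hypothesis gives $C(B)=C(D)$. The series is then exactly $(L_Cf)(\tau x)$. Convergence of both series was established after the definition of $L_C$, so the reindexing is legitimate.

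\textbf{Step 3: admissible extensions transform equivariantly.} Let $\varphi$ be an admissible extension of $f$ with flux density $g_\varphi$. Define the transported function $\tilde\varphi(v_B):=\varphi(v_{\tau(B)})$. Because $W$ is $\tau$-invariant and $\tau$ is a tree automorphism, $\Delta_W\tilde\varphi(v_B)=\Delta_W\varphi(v_{\tau B})=0$, and $\tilde\varphi(\infty)=0$ since $\infty$ is fixed. The trace of $\tilde\varphi$ is $f\circ\tau$, which is again in $L^2$ by measure preservation. Its flux is
\begin{align*}
\tilde\Phi(B)=\Phi(\tau B)=\int_{\tau B}g_\varphi\,dm=\int_B g_\varphi\circ\tau\,dm,
\end{align*}
so $\tilde\varphi$ is admissible with density $g_\varphi\circ\tau\in\mathcal S_0(X)$.

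\textbf{Step 4: the normal derivative.} The quotients $W(B_k,B_{k-1})/m(B_k)$ along the chain of $x$ equal the corresponding quotients along the chain of $\tau x$, and the shift of index by $c$ is harmless as $k\to+\infty$. Hence the expression defining $\partial_n^{(W)}\tilde\varphi(x)$ coincides termwise with the one defining $\partial_n^{(W)}\varphi(\tau x)$. Composition with $\tau$ is an $L^2$ isometry, so the $L^2$ limits correspond as well. When the hypotheses of Lemma \ref{lem:relation_partialn_and_g} hold, this can be cross-checked through $\partial_n^{(W)}=\kappa_W g$.

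\textbf{Step 5: conclusion.} If the admissible extension is unique, then $\tilde\varphi$ is the extension of $f\circ\tau$. The Dirichlet-to-Neumann map $\Lambda$ therefore satisfies $\Lambda(f\circ\tau)=(\Lambda f)\circ\tau$. If the hypotheses hold for every isometry, $L_C$ and $\Lambda$ are globally isotropic in the sense of the definition.

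The main obstacle is Step 1: justifying that an arbitrary isometry preserves the distinguished basis $\mathcal B$ and the immediate-super-ball relation. I would first try to show that $\mathcal B$ is intrinsic, for instance if it consists of all compact open balls. Otherwise I would add to the hypotheses that $\tau$ maps $\mathcal B$ to itself, which the invariance assumptions on $C$ and $W$ implicitly presuppose anyway.
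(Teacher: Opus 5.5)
Your proposal is correct and follows essentially the same route as the paper: you transport $\varphi$ along the induced tree automorphism, check harmonicity, the trace, the flux density $g_\varphi\circ\tau$ and the normal-derivative quotients, and then invoke uniqueness to get equivariance of the Dirichlet-to-Neumann map. The differences are minor. The paper proves equivariance of $L_C$ through the kernel $K(x,y)$ instead of reindexing the ball series. Your explicit tracking of the height shift $c$, and of the tacit requirement $\tau(\mathcal B)=\mathcal B$ (which the hypothesis $C(\tau(B))=C(B)$ indeed presupposes), is more careful than the paper, which simply asserts that $\tau$ induces a graph automorphism.
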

\begin{proof}
\begin{enumerate}
    \item For $L_C$, recall that
    \begin{align*}
        L_Cf(x)=\int_XK(x,y)(f(x)-f(y))\,dm(y).
    \end{align*}
    Since the assumptions imply $K(\tau(x),\tau(y))=K(x,y)$, we obtain
    \begin{align*}
        L_C(f\circ\tau)(x)
        =&\int_XK(x,y)(f(\tau(x))-f(\tau(y)))\,dm(y)\\
        =&\int_XK(x,\tau^{-1}(z))(f(\tau(x))-f(z))
        \,dm(\tau^{-1}(z))\\
        =&\int_XK(\tau(x),z)(f(\tau(x))-f(z))\,dm(z)\\
        =&(L_Cf)\circ\tau(x).
    \end{align*}

    \item For $\partial_n^{(W)}$, lift the boundary isometry to the tree
    $\mathcal T_X$. Since $\tau$ is an isometry, it maps each ultrametric
    ball $B$ to $\tau(B)$ and thereby induces a graph automorphism of
    $\mathcal T_X$. The assumed invariance of the edge weight gives
    \begin{align*}
        W(\tau(A),\tau(B))=W(A,B).
    \end{align*}
    Given $f\in \mathcal{S}(X)$ and an admissible extension $\varphi$ solving
    \eqref{eq:Extension_problem_TX}, define
    \begin{align*}
        \psi(v_B):=\varphi(v_{\tau(B)}).
    \end{align*}
    Then
    \begin{align*}
        \lim_{v_B\to x}\psi(v_B)
        =\lim_{v_B\to x}\varphi(v_{\tau(B)})
        =\lim_{v_{\tau(B)}\to\tau(x)}\varphi(v_{\tau(B)})=f(\tau(x))=(f\circ\tau)(x),
    \end{align*}
    and
    \begin{align*}
        \sum_{v_A\sim v_B}W(A,B)
        \bigl(\psi(v_A)-\psi(v_B)\bigr)=\sum_{v_A\sim v_B}W(\tau(A),\tau(B))
        \bigl(\varphi(v_{\tau(A)})-\varphi(v_{\tau(B)})\bigr)=0,
    \end{align*}
    since $\tau(A)$ runs over all neighbors of $\tau(B)$ as $A$ runs over
    all neighbors of $B$. Moreover,
    \begin{align*}
        \Phi_\psi(B)=\Phi_\varphi(\tau(B))=\int_{\tau(B)}g_\varphi(z)\,dm(z)=\int_Bg_\varphi(\tau(y))\,dm(y).
    \end{align*}
    Thus $\psi$ is admissible, $g_\psi=g_\varphi\circ\tau$, and
    $\psi(\infty)=0$. If the admissible solution is unique, $\psi$ is
    precisely the extension of $f\circ\tau$.

    Finally, for the chain $B_k=B_k(x)$, we have
    \begin{align*}
        \partial_n^{(W)}\psi(x)
        =&\lim_{k\to+\infty}
        \frac{W(B_k,B_{k-1})}{m(B_k)}
        \Bigl((f\circ\tau)(x)-\psi(v_{B_{k-1}})\Bigr)\\
        =&\lim_{k\to+\infty}
        \frac{W(B_k,B_{k-1})}{m(B_k)}
        \Bigl(f(\tau(x))-\varphi(v_{\tau(B_{k-1})})\Bigr)\\
        =&\lim_{k\to+\infty}
        \frac{W(\tau(B_k),\tau(B_{k-1}))}{m(\tau(B_k))}
        \Bigl(f(\tau(x))-\varphi(v_{\tau(B_{k-1})})\Bigr)\\
        =&\bigl(\partial_n^{(W)}\varphi\bigr)(\tau(x))\\
        =&\bigl(\partial_n^{(W)}\varphi\bigr)\circ\tau(x).
    \end{align*}
\end{enumerate}
\end{proof}

For the next two results, we retain the hypotheses of Lemma
\ref{lem:relation_partialn_and_g} and impose two additional assumptions.
First, the admissible Dirichlet problem is uniquely solvable for data in
$\mathcal{S}_0(X)$, so that the extension depends linearly on the boundary datum.
Second, for every immediate inclusion $B\subset B'$, the maximal sub-balls
of $B'$ are permuted transitively by tree
automorphisms induced by measure-preserving isometries of $X$. These
automorphisms preserve both $C$ and $W$, fix $\infty$, fix $v_{B'}$ and
every ancestor $v_D$ with $D\supseteq B'$ pointwise, and fix the part of the
tree outside the descendant branches of $B'$. We further assume that the
stabilizer of $B$ and $B'$ acts transitively on both $B$ and
$B'\setminus B$. No explicit formula for $W$ is assumed here. These
additional assumptions are used only in Lemma
\ref{lem:eigvalue_deformed_derivative_TX} and Theorem
\ref{thm:bijective_LC_partialn}.

\begin{lemma}
\label{lem:eigvalue_deformed_derivative_TX}
    Let $\varphi_{f_B}$ be the admissible harmonic extension of $f_B$
    solving \eqref{eq:Extension_problem_TX}. Then there exists $\mu_B$ such
    that
    \begin{align*}
        \partial_n^{(W)}\varphi_{f_B}=\mu_Bf_B.
    \end{align*}
\end{lemma}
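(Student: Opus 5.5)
By Lemma \ref{lem:relation_partialn_and_g}, it suffices to show that the flux density $g:=g_{\varphi_{f_B}}\in\mathcal S_0(X)$ is a scalar multiple of $f_B$. Then $\mu_B=\kappa_W c$ for the scalar $c$ with $g=cf_B$. The plan is to pin down $g$ in two steps: first its support, using locality of the conservation law and the boundary condition, and then its shape on that support, using the symmetry assumptions and uniqueness.

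\textbf{Step 1 (support).} Let $\sigma$ be any tree automorphism from the assumed class that fixes $v_{B'}$, its ancestors, and everything outside the descendant branches of $B'$. I would show first that $g$ vanishes outside $B'$. Outside $B'$ the datum $f_B$ is zero. The admissible extension is unique, and by Theorem \ref{thm:isotropic_operators} it transforms equivariantly. This alone does not give locality, so I would instead construct an explicit admissible candidate and invoke uniqueness. Take $\psi=\varphi_{g_0}$ for a density $g_0\in\operatorname{span}W_{B'}$. The converse construction preceding Lemma \ref{lem:relation_partialn_and_g} shows that $\varphi_{g_0}$ is harmonic and satisfies $\varphi_{g_0}(\infty)=0$. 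Its boundary trace is $\sum_{D\ni x}W(D,D')^{-1}\int_D g_0\,dm$, a finite combination of tails.

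\textbf{Step 2 (shape).} On $B'$, any element of $\operatorname{span}W_{B'}$ is determined by its values on the sub-balls of $B'$, under the zero-mean constraint. The stabilizer of $B$ and $B'$ acts transitively on $B$ and on $B'\setminus B$, and it preserves $f_B$. By uniqueness and Theorem \ref{thm:isotropic_operators}, the extension $\varphi_{f_B}$ is invariant under this stabilizer, and hence so is $g$. Thus $g$ takes a constant value on $B$ and another constant value on $B'\setminus B$. Combined with $\int_{B'}g\,dm=0$, this forces $g$ to be a multiple of $f_B$ on $B'$.

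\textbf{Step 3 (closing the argument).} The cleanest route avoids Step 1's locality question altogether. I would show that the linear map $T\colon f\mapsto g_{\varphi_f}$ restricts to an endomorphism of the finite-dimensional, stabilizer-invariant piece $\operatorname{span}W_{B'}$, and then decompose. Concretely, for $g_0=f_B$ I compute the trace of $\varphi_{f_B}$ built from density $f_B$. Inside $B'$, the ancestor sum over $D\supseteq B'$ vanishes because $\int_D f_B=0$. The remaining terms involve only balls $D\subseteq B'$ on which $f_B$ is constant. By the transitivity assumption the weights $W$ are equal within each symmetric class, so the trace equals $a$ on $B$ and $b$ on $B'\setminus B$, and it is $0$ outside $B'$. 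The zero-mean property of the trace then forces the trace to be a multiple $\lambda f_B$. I would verify $\lambda\neq0$ from positivity of $W$; the contributions have differing signs, but on $B$ the term coming from the edge $(B,B')$ dominates. By uniqueness, $\varphi_{f_B}=\lambda^{-1}\varphi_{g_0}$ with $g_0=f_B$, so $g=\lambda^{-1}f_B$ and $\mu_B=\kappa_W/\lambda$.

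\textbf{Main obstacle.} The hard step is the trace computation. One must check that the descendant tail sums agree across all points of $B$ and across all points of $B'\setminus B$; this is where the isometry-invariance of $W$ is used. One must also check that the resulting trace has mean zero and is therefore proportional to $f_B$, which is why exploiting the symmetry is essential. If a direct sign argument for $\lambda\neq0$ fails, I would fall back on the energy identity of Theorem \ref{thm:Green's_first_identity_TX}: the energy of $\varphi_{g_0}$ equals the pairing of the trace with $g_0$, which is positive, and hence $\lambda>0$.
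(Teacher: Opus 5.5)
Your Step 3 is a correct argument, and it takes a genuinely different route from the paper. Steps 1 and 2 can simply be dropped: Step 2 uses $\int_{B'}g\,dm=0$, which is exactly the locality that Step 1 leaves unproved.

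One step of Step 3 needs a different justification. For the flux construction $\varphi_{g_0}$ with $g_0=f_B$, the trace is $f_B(x)S(x)$, where
\[
S(x)=\sum_{\substack{D\ni x\\ D\subsetneq B'}}\frac{m(D)}{W(D,D')}.
\]
The stabilizer of $B$ and $B'$ only makes $S$ constant on $B$ and on $B'\setminus B$ separately. The trace has mean zero if and only if these two constants coincide. So ``the zero-mean property of the trace'' is not something you may invoke; it is what has to be proved.

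Close it with the other half of the local homogeneity hypothesis. For each maximal sub-ball $A$ of $B'$ there is an automorphism that maps $A$ onto $B$ and preserves $m$ and $W$. It fixes $v_{B'}$ and $\infty$, so it preserves heights and the set of strict descendants of $B'$. Hence $S$ is constant on all of $B'$, and the trace is exactly $Sf_B$; no mean-zero argument is needed. The same symmetry makes the tails $\sum_{j>k}m(B_j(x))/W(B_j(x),B_{j-1}(x))$ constant on $B'$. This gives the $L^2$ convergence of the trace that admissibility requires.

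Finally, $S>0$ is immediate: every summand is positive, and $S$ is finite under the hypotheses of Lemma \ref{lem:relation_partialn_and_g}. Your remark about contributions of differing signs, with the edge $(B,B')$ dominating, is mistaken, since $f_B$ is constant on each maximal sub-ball of $B'$.

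\medskip

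The paper never writes the extension down. Stabilizer invariance makes $\partial_n^{(W)}\varphi_{f_B}$ equal to constants $k_1$ on $B$ and $k_2$ on $B'\setminus B$. The automorphisms $A\mapsto B$, linearity, and $\sum_A m(A)f_A=0$ then give $\varphi_{f_B}(v_D)=0$ for $D\supseteq B'$. A cut-off $\widetilde\varphi$ plus uniqueness kills $\varphi_{f_B}$ on the exterior subtree. Green's identity with $g=1$ then yields $k_1m(B)+k_2(m(B')-m(B))=0$.

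You replace the localization and the Green identity with one explicit admissible solution and one use of uniqueness. This gives $\varphi_{f_B}=S^{-1}\varphi_{g_0}$ and $\mu_B=\kappa_W/S$. Your route yields more than the lemma asks:
\begin{itemize}
\item existence of $\varphi_{f_B}$;
\item an explicit, positive $\mu_B$, so $\mu_B\neq0$ without the injectivity argument in Theorem \ref{thm:bijective_LC_partialn};
\item a direct link to the three-region computation in Theorem \ref{thm:find_out_the_weight}, where for the canonical weight $S=\kappa/\lambda(B')$ and hence $\mu_B=\lambda(B')$.
\end{itemize}
The paper's route is softer: it uses only uniqueness, linearity and equivariance, and never has to evaluate the resistance series.
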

\begin{proof}
    Let $\text{Aut}(\mathcal T_X)$ denote the group of tree
    automorphisms induced by measure-preserving isometries of $X$ that fix
    $\infty$ and preserve $C$ and $W$, and consider the stabilizer subgroup
    $G_{B,B'}<\text{Aut}(\mathcal T_X)$ that fixes the vertices
    corresponding to $B$ and $B'$. By the local homogeneity assumed above,
    it acts transitively on $B$ and on $B'\setminus B$. The function $f_B$
    is invariant under $G_{B,B'}$. Theorem
    \ref{thm:isotropic_operators} and uniqueness of the admissible extension
    therefore imply that both $\varphi_{f_B}$ and its deformed normal
    derivative are invariant under $G_{B,B'}$. Consequently, the normal
    derivative is constant on $B$ and on $B'\setminus B$; denote these
    constants by $k_1$ and $k_2$, respectively.

    We next prove that the extension vanishes on the exterior subtree. For
    each $A\in\text{sub}(B')$, choose one of the local automorphisms above
    that maps $A$ onto $B$, and denote its boundary action by $\tau$. Since it
    preserves $m$ and fixes $B'$, we have $f_A=f_B\circ\tau$. It also
    fixes every $D\supseteq B'$. Therefore, by Theorem
    \ref{thm:isotropic_operators},
    \begin{align*}
        \varphi_{f_A}(v_D)
        =\varphi_{f_B}(v_{\tau(D)})
        =\varphi_{f_B}(v_D),
        \qquad D\supseteq B'.
    \end{align*}
    On the other hand, the definition of the functions $f_A$ gives
    \begin{align*}
        \sum_{A\in\text{sub}(B')}m(A)f_A=0.
    \end{align*}
    Linearity and uniqueness of the extension thus imply, for every
    $D\supseteq B'$,
    \begin{align*}
        0=\varphi_{\sum\limits_{A\in\text{sub}(B')}m(A)f_A}(v_D)
        =\sum_{A\in\text{sub}(B')}m(A)\varphi_{f_A}(v_D)
        =m(B')\varphi_{f_B}(v_D).
    \end{align*}
    Hence $\varphi_{f_B}(v_D)=0$ along the ancestor chain of $B'$. To obtain
    the corresponding conclusion on the exterior subtree within the
    admissible class, define
    \begin{align*}
        \widetilde\varphi(v_A)=
        \begin{cases}
            \varphi_{f_B}(v_A),&A\subseteq B',\\
            0,&A\not\subseteq B'.
        \end{cases}
    \end{align*}
    Since $\varphi_{f_B}$ vanishes at $v_{B'}$ and at the vertex
    corresponding to the immediate super-ball of $B'$, the flux through the
    upper edge of $B'$ is zero. Thus
    \begin{align*}
        \int_{B'}g_{\varphi_{f_B}}\,dm=\Phi(B')=0,
    \end{align*}
    so $\boldsymbol1_{B'}g_{\varphi_{f_B}}\in \mathcal{S}_0(X)$. The measure recovery
    formula, applied separately to balls contained in $B'$, balls disjoint
    from $B'$, and balls containing $B'$, shows that $\widetilde\varphi$ is
    an admissible harmonic function with this flux density. Its boundary trace is
    $f_B$, because $f_B=0$ on $X\setminus B'$, and it vanishes at $\infty$.
    Uniqueness therefore gives
    \begin{align*}
        \varphi_{f_B}=0\quad\text{on }\mathcal T_{X\setminus B'}.
    \end{align*}
    Hence $\varphi_{f_B}$, and therefore its normal derivative, vanishes on
    $X\setminus B'$, as illustrated in Figure
    \ref{fig:partialTXbackslashB'}. Writing this third value as $k_3$, we
    have $k_3=0$, and the three-region decomposition reads
    \begin{align*}
        \partial_n^{(W)}\varphi_{f_B}(x)
        =k_1\boldsymbol1_B(x)
        +k_2\boldsymbol1_{B'\setminus B}(x)
        +k_3\boldsymbol1_{X\setminus B'}(x).
    \end{align*}
    Taking $g=1$ and $h=\varphi_{f_B}$ in Theorem
    \ref{thm:Green's_first_identity_TX}, we obtain
    \begin{align*}
        0
        =&\frac{1}{\kappa_W}
        \int_X\partial_n^{(W)}\varphi_{f_B}(x)\,dm(x)\\
        =&\frac{1}{\kappa_W}
        \int_X\left(k_1\boldsymbol1_B(x)
        +k_2\boldsymbol1_{B'\setminus B}(x)\right)\,dm(x)\\
        =&\frac{1}{\kappa_W}
        \left(k_1m(B)+k_2\bigl(m(B')-m(B)\bigr)\right).
    \end{align*}
    Thus
    \begin{align*}
        \partial_n^{(W)}\varphi_{f_B}(x)
        =&-k_2\frac{m(B')-m(B)}{m(B)}\boldsymbol1_B(x)
        +k_2\boldsymbol1_{B'\setminus B}(x)\\
        =&-k_2m(B')\left(
        \frac{m(B')-m(B)}{m(B)m(B')}\boldsymbol1_B(x)
        -\frac{1}{m(B')}\boldsymbol1_{B'\setminus B}(x)\right)\\
        =&-k_2m(B')\left(\left(\frac{1}{m(B)}
        -\frac{1}{m(B')}\right)\boldsymbol1_B(x)
        -\frac{1}{m(B')}\boldsymbol1_{B'\setminus B}(x)\right)\\
        =&-k_2m(B')f_B(x).
    \end{align*}
    Consequently,
    \begin{align*}
        \mu_B=-k_2m(B').
    \end{align*}
\end{proof}

\begin{figure}[H]
    \centering
    \includegraphics[width=0.7\textwidth]{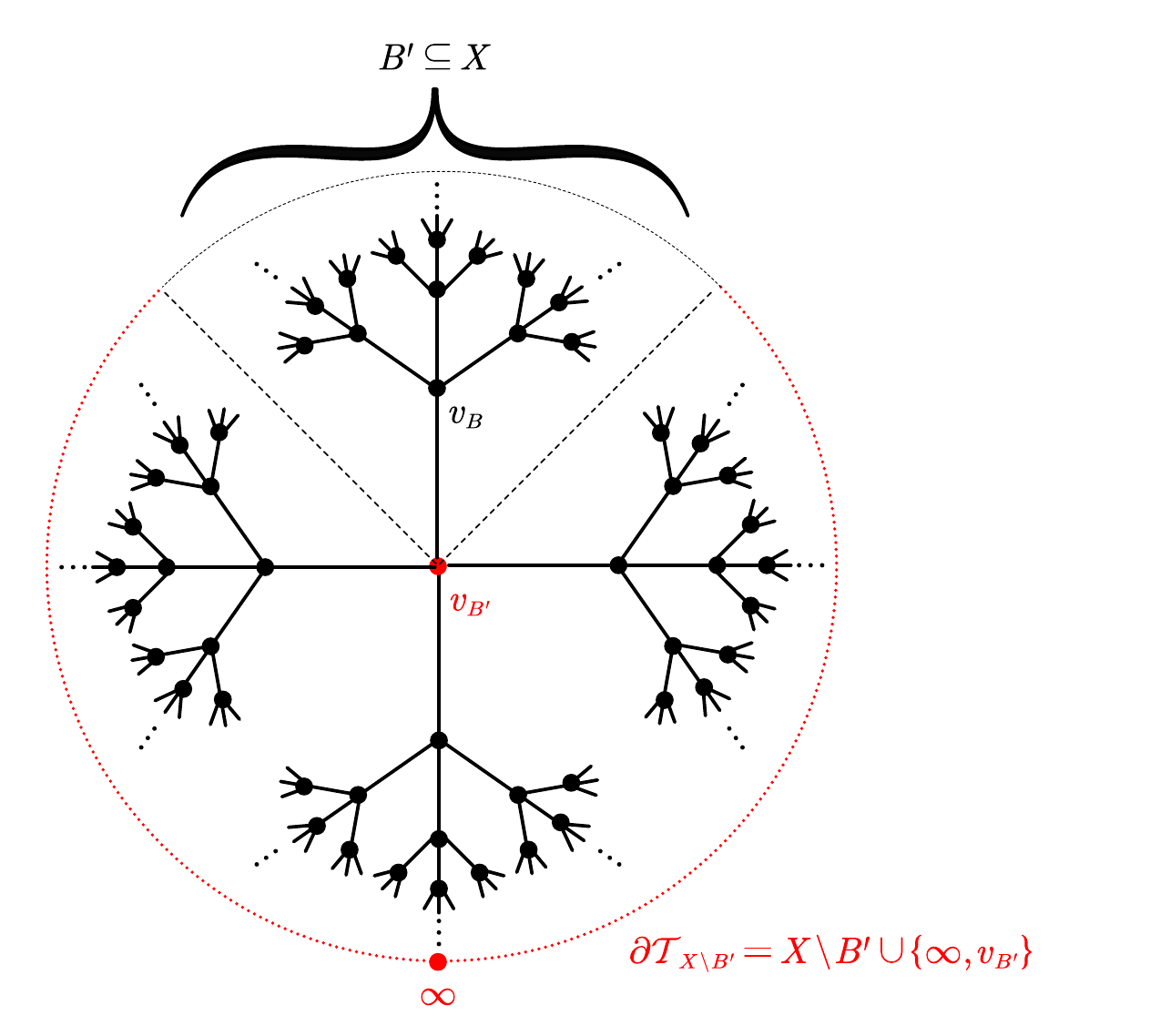}
    \caption{The exterior subtree of $\mathcal T_X$ relative to $B'$}
    \label{fig:partialTXbackslashB'}
\end{figure}

\begin{theorem}
\label{thm:bijective_LC_partialn}
    Under the assumptions on unique solvability, the normal limit, and local
    homogeneity stated above, both $L_C$ and $\partial_n^{(W)}$ are bijections
    from $\mathcal{S}_0(X)$ onto itself.
\end{theorem}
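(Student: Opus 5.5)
We will use the algebraic eigenbasis of $\mathcal{S}_0(X)$ built from the families $W_{B'}$, as described in the remark after Lemma \ref{lem:Eigenfunctions_of_LC}. Both operators will be shown to act diagonally on this basis with nonzero eigenvalues. The basis consists of the functions $f_B$, with one element deleted from each $W_{B'}$, and every $f\in\mathcal{S}_0(X)$ is a \emph{finite} linear combination of them. A linear map that sends each basis vector to a nonzero multiple of itself is therefore a bijection of $\mathcal{S}_0(X)$ onto itself. Its inverse is obtained by dividing each coefficient by the corresponding eigenvalue.

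For $L_C$, Lemma \ref{lem:Eigenfunctions_of_LC} gives $L_Cf_B=\lambda(B')f_B$ with $\lambda(B')>0$. Hence $L_C$ maps $\mathcal{S}_0(X)$ into itself, since every $f_B$ lies in $\mathcal{S}_0(X)$. Applying it to a finite expansion $f=\sum c_Bf_B$ gives $L_Cf=\sum c_B\lambda(B')f_B$. Injectivity follows from the linear independence of the chosen basis. Surjectivity follows by taking the preimage $\sum c_B\lambda(B')^{-1}f_B$.

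For $\partial_n^{(W)}$, the standing uniqueness assumption makes the map $f\mapsto\varphi_f$ linear on $\mathcal{S}_0(X)$. The normal limit is linear as well, so $f\mapsto\partial_n^{(W)}\varphi_f$ is linear. Lemma \ref{lem:eigvalue_deformed_derivative_TX} gives $\partial_n^{(W)}\varphi_{f_B}=\mu_Bf_B$. The argument then reduces to showing $\mu_B\neq0$, and this is the main obstacle.

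To prove $\mu_B\neq0$, apply Theorem \ref{thm:Green's_first_identity_TX} with $g=h=\varphi_{f_B}$. The bulk term vanishes by harmonicity, so
\begin{align*}
\frac12\sum_{v_A\sim v_D}W(A,D)\bigl(\varphi_{f_B}(v_A)-\varphi_{f_B}(v_D)\bigr)^2=\frac{\mu_B}{\kappa_W}\int_X f_B^2\,dm .
\end{align*}
The hypotheses of that theorem hold here. As shown in its proof, only finitely many edges near $B'$ carry a nonzero flux. Moreover, $g_R=\varphi_{f_B}(v_{B_R(\cdot)})$ converges to $f_B$ in $L^2$, because $f_B$ is locally constant and the tail sums of $m(B_j)/W(B_j,B_{j-1})$ are bounded.

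It remains to see that the left-hand side is strictly positive. Suppose it were zero. Then $\varphi_{f_B}$ would be constant on the connected tree $\mathcal T_X$, hence identically zero because $\varphi_{f_B}(\infty)=0$. Its boundary trace would then vanish, contradicting $f_B\not\equiv0$, which holds since $m(B)<m(B')$ because $B'$ has at least two sub-balls. Since $\int f_B^2\,dm>0$ and $\kappa_W>0$, we conclude $\mu_B>0$. The diagonal argument above then gives bijectivity of $\partial_n^{(W)}$ on $\mathcal{S}_0(X)$, with inverse $\sum c_Bf_B\mapsto\sum c_B\mu_B^{-1}f_B$.

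One bookkeeping point must be checked. The relation $\sum_{A\in\mathrm{sub}(B')}m(A)f_A=0$ has to be compatible with the assignment $f_A\mapsto\mu_Af_A$, that is, $\mu_A$ must be constant on $\mathrm{sub}(B')$. This follows from the transitivity assumption together with Theorem \ref{thm:isotropic_operators}. Alternatively, one can avoid the issue by working only with the chosen basis and linearity.
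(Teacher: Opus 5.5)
Your proof is correct and follows the paper's route. Both arguments diagonalize the two operators on the fixed Haar eigenbasis and use the energy identity to exclude a zero eigenvalue of $\partial_n^{(W)}$. The only difference is the order of steps. You prove $\mu_B>0$ directly by applying Green's identity to $\varphi_{f_B}$. The paper instead first proves injectivity for arbitrary $f$: from $g_{\varphi_f}=0$, all truncated energies vanish. It then deduces $\mu_{B_i}\neq0$.

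One justification needs correcting. It is not true that only finitely many edges carry nonzero flux. Once $\mu_B\neq0$, every strict descendant $D$ of $B'$ has $\Phi(D)=\int_D g_{\varphi_{f_B}}\,dm\neq0$. Finiteness holds only within each truncation $\mathcal{T}_X^R$. The absolute convergence you need follows instead from the truncated Green identity, the nonnegativity of the energy terms, and monotone convergence, exactly as in the paper.
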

\begin{proof}
    As in the remark following Lemma \ref{lem:Eigenfunctions_of_LC}, we
    delete one element from each family $W_{B'}$ and enumerate the resulting
    algebraic eigenbasis of $\mathcal{S}_0(X)$ as $\{f_{B_i}\}$. All expansions below
    are taken with respect to this
    fixed linearly independent family. For brevity,
    $\partial_n^{(W)}f$ denotes
    $\partial_n^{(W)}\varphi_f$, where $\varphi_f$ is the unique admissible
    extension of $f$.
    \begin{enumerate}
        \item For $L_C$:
        \begin{enumerate}
            \item $L_C:\mathcal{S}_0(X)\to \mathcal{S}_0(X)$: For any
            $f=\sum\limits_{i=1}^Nc_if_{B_i}\in \mathcal{S}_0(X)$, we have
            \begin{align*}
                L_Cf
                =L_C\left(\sum_{i=1}^Nc_if_{B_i}\right)
                =\sum_{i=1}^Nc_i\lambda(B_i')f_{B_i}\in \mathcal{S}_0(X).
            \end{align*}

            \item Injectivity: Suppose that $f\in \mathcal{S}_0(X)$ satisfies
            $L_Cf=0$. Expanding $f$ as a finite sum over the fixed
            linearly independent Haar family,
            $f=\sum\limits_{i=1}^Nc_if_{B_i}$, we have
            \begin{align*}
                \sum_{i=1}^Nc_i\lambda(B_i')f_{B_i}=0.
            \end{align*}
            It follows that $c_i\lambda(B_i')=0$ for every $i$. Since
            $\lambda(B_i')>0$, we conclude that $c_i=0$ for every $i$.
            Consequently, $f=0$.

            \item Surjectivity: For every
            $g=\sum\limits_{i=1}^Md_if_{B_i}\in \mathcal{S}_0(X)$, let
            \begin{align*}
                f:=\sum_{i=1}^M\frac{d_i}{\lambda(B_i')}f_{B_i}
                \in \mathcal{S}_0(X).
            \end{align*}
            Then $L_Cf=g$.
        \end{enumerate}

        \item For $\partial_n^{(W)}$:
        \begin{enumerate}
            \item $\partial_n^{(W)}:\mathcal{S}_0(X)\to \mathcal{S}_0(X)$: For $f_{B_i}$ and
            its extension $\varphi_{f_{B_i}}$, Lemma
            \ref{lem:eigvalue_deformed_derivative_TX} gives
            \begin{align*}
                \partial_n^{(W)}\varphi_{f_{B_i}}
                =\mu_{B_i}f_{B_i}.
            \end{align*}
            Hence, for any
            $f=\sum\limits_{i=1}^Nc_if_{B_i}\in \mathcal{S}_0(X)$,
            linearity of the extension and of the normal derivative gives
            \begin{align*}
                \partial_n^{(W)}f
                =\partial_n^{(W)}\left(\sum_{i=1}^Nc_if_{B_i}\right)
                =\sum_{i=1}^Nc_i\mu_{B_i}f_{B_i}\in \mathcal{S}_0(X).
            \end{align*}

            \item Injectivity: Suppose that $f\in \mathcal{S}_0(X)$ satisfies
            $\partial_n^{(W)}f=0$. By Lemma
            \ref{lem:relation_partialn_and_g} and $\kappa_W>0$, the
            flux density of $\varphi_f$ satisfies
            \begin{align*}
                g_{\varphi_f}=0.
            \end{align*}
            For each $R$, take $g=h=\varphi_f$ in the truncated Green
            identity and use the harmonicity of $\varphi_f$. The boundary
            calculation in the proof of Theorem
            \ref{thm:Green's_first_identity_TX} gives
            \begin{align*}
                \frac{1}{2}\sum_{\substack{v_A,v_B\in\mathcal T_X^R\\v_A\sim v_B}}W(A,B)\left|\varphi_f(v_A)-\varphi_f(v_B)\right|^2=\int_X\varphi_f(v_{B_R(x)})g_{\varphi_f}(x)\,dm(x)=0.
            \end{align*}
            For every edge, both endpoints lie in $\mathcal T_X^R$ once $R$
            is sufficiently large. The nonnegative truncated energies
            therefore increase to the full energy as $R\to+\infty$:
            \begin{align*}
                0
                =&\frac{1}{2}
                \sum_{\substack{v_A,v_B\in\mathcal T_X\\v_A\sim v_B}}
                W(A,B)\left|\varphi_f(v_A)-\varphi_f(v_B)\right|^2.
            \end{align*}
            Since $W(A,B)>0$, the function $\varphi_f$ is constant on the
            connected tree $\mathcal T_X$. Since $\varphi_f(\infty)=0$, this
            constant is zero, and its boundary trace satisfies $f=0$.

            \item Surjectivity: Lemma
            \ref{lem:eigvalue_deformed_derivative_TX} gives
            \begin{align*}
                \partial_n^{(W)}f_{B_i}=\mu_{B_i}f_{B_i}.
            \end{align*}
            If $\mu_{B_i}=0$, then $f_{B_i}$ belongs to the kernel of
            $\partial_n^{(W)}$, contrary to injectivity. Therefore
            \begin{align*}
                \mu_{B_i}\ne0
            \end{align*}
            for every $i$. For every
            $g=\sum\limits_{i=1}^Md_if_{B_i}\in \mathcal{S}_0(X)$, let
            \begin{align*}
                f:=\sum_{i=1}^M\frac{d_i}{\mu_{B_i}}f_{B_i}
                \in \mathcal{S}_0(X).
            \end{align*}
            Then $\partial_n^{(W)}f=g$.
        \end{enumerate}
    \end{enumerate}
\end{proof}

\subsection{Determination of the edge weight}
\begin{lemma}
\label{lem:Green's_formula}
    Let $\varphi$ be an admissible solution of
    \eqref{eq:Extension_problem_TX} with boundary trace $f$, and suppose that
    the hypotheses of Lemma \ref{lem:relation_partialn_and_g} are satisfied.
    Then the following
    cancellation form of Green's formula holds:
    \begin{align*}
        f(x)=G_W\bigl(\partial_n^{(W)}\varphi\bigr)(x),
    \end{align*}
    where, for $q\in \mathcal{S}_0(X)$,
    \begin{align*}
        G_Wq(x)=\frac{1}{\kappa_W}
        \sum_{\substack{B\in\mathcal B\\B\ni x}}
        \frac{1}{W(B,B')}\int_Bq(y)\,dm(y).
    \end{align*}
    If, in addition,
    \begin{align*}
        \sum_{D\supseteq B_{x,y}}\frac{1}{W(D,D')}<+\infty,
        \qquad x\ne y,
    \end{align*}
    then the pointwise kernel representation is valid:
    \begin{align*}
        f(x)=\int_XG_W(x,y)\partial_n^{(W)}\varphi(y)\,dm(y),
    \end{align*}
    where
    \begin{align*}
        G_W(x,y)=\frac{1}{\kappa_W}
        \sum_{\substack{B\in\mathcal B\\B\supseteq B_{x,y}}}
        \frac{1}{W(B,B')}.
    \end{align*}
\end{lemma}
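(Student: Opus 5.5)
The plan is to reconstruct $f$ from its fluxes by telescoping along the chain $B_k(x)$ and then substitute Lemma~\ref{lem:relation_partialn_and_g}.

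\textbf{Step 1: telescoping.} Fix $x\in X$ and its chain $B_k=B_k(x)$. Since $\varphi(v_{B_k})\to f(x)$ as $k\to+\infty$ and $\varphi(v_{B_k})\to\varphi(\infty)=0$ as $k\to-\infty$, we have
\begin{align*}
    f(x)=\sum_{k\in\mathbb Z}\bigl(\varphi(v_{B_k})-\varphi(v_{B_{k-1}})\bigr)
    =\sum_{k\in\mathbb Z}\frac{\Phi(B_k)}{W(B_k,B_{k-1})}.
\end{align*}
By \eqref{eq:the_measure_recovery_formula}, $\Phi(B)=\int_B g_\varphi\,dm$. Choose a ball $P\supseteq\operatorname{supp}g_\varphi$. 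Then $\Phi(B)=0$ whenever $B\supseteq P$ or $B\cap P=\varnothing$, because $\int_X g_\varphi\,dm=0$. Hence only balls $B_k\subseteq P$ contribute, so the ascending part of the sum is finite. For pointwise convergence of the descending tail I would not use the limit $\varphi(v_{B_k})\to f$ directly. Instead, for $k\ge k_0$ the ball averages of $g_\varphi$ equal $g_\varphi(x)$, so the tail is $g_\varphi(x)\sum_{j\ge k}m(B_j)/W(B_j,B_{j-1})$, which is finite by the summability hypothesis implicit in Lemma~\ref{lem:relation_partialn_and_g}. The trace is therefore realized pointwise, at least for a.e.\ $x$, where it agrees with the $L^2$ trace.

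\textbf{Step 2: substitution.} This gives
\begin{align*}
    f(x)=\sum_{B\ni x}\frac{1}{W(B,B')}\int_B g_\varphi\,dm .
\end{align*}
By Lemma~\ref{lem:relation_partialn_and_g}, $\partial_n^{(W)}\varphi=\kappa_W g_\varphi$, with $g_\varphi\in\mathcal S_0(X)$. Replacing $g_\varphi$ by $\kappa_W^{-1}\partial_n^{(W)}\varphi$ yields exactly $f=G_W(\partial_n^{(W)}\varphi)$. The cancellation structure matters here: for $q\in\mathcal S_0(X)$, every ball containing $\operatorname{supp}q$ contributes zero. So $G_Wq$ is a finite sum plus a convergent fine-scale tail, even when $\sum 1/W$ diverges at large scales.

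\textbf{Step 3: kernel form.} Under the extra hypothesis, interchange the sum over $B\ni x$ with the integral over $y$. The key identity is $\boldsymbol 1_B(y)=1$ with $B\ni x$ if and only if $B\supseteq B_{x,y}$, for $y\neq x$. This turns the expression into
\begin{align*}
    \int_X\Bigl(\sum_{B\supseteq B_{x,y}}\frac{1}{W(B,B')}\Bigr)q(y)\,dm(y),
\end{align*}
following the computation in the remark on $K(x,y)$.

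\textbf{Main obstacle.} The obstacle is justifying this Fubini step. The sum $\sum_{B\ni x}W(B,B')^{-1}|\int_B q|$ may not dominate $\int G_W(x,y)|q(y)|\,dm(y)$, which can diverge near $y=x$ or at large scales. I would split the integral over $y\in B_{k_0}(x)$ and $y\notin B_{k_0}(x)$.

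For $y\in B_{k_0}(x)$, $q$ is constant near $x$, and the fine-scale part equals $q(x)\sum_{j\ge k_0}m(B_j)/W(B_j,B_{j-1})$, with the integrable kernel sum handled directly.

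For $y\notin B_{k_0}(x)$, only finitely many scales between $B_{k_0}(x)$ and $P$ carry a nonzero integrand. I would handle the scales above $P$ by using $\int_P q=0$ to subtract the constant $\sum_{D\supsetneq P}1/W$, which is finite by the hypothesis. After this subtraction only finitely many terms remain, and the interchange becomes a finite rearrangement.
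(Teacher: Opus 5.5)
Your proof is correct. Steps 1--2 are the paper's proof: telescope along $B_k(x)$, then substitute $g_\varphi=\kappa_W^{-1}\partial_n^{(W)}\varphi$ using Lemma~\ref{lem:relation_partialn_and_g}. Your point that the fine-scale tail $g_\varphi(x)\sum_{j\ge k}m(B_j)/W(B_j,B_{j-1})$ makes $\varphi(v_{B_k(x)})$ converge at every $x$ is a small gain in rigor. It shows the pointwise limit is a representative of the $L^2$ trace, whereas the paper simply assumes the pointwise limit.

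Step 3 is where you diverge, and the obstacle you describe is not actually there. The quantity that must be finite for Fubini is $\sum_{B\ni x}W(B,B')^{-1}\int_B|q|\,dm$, with the absolute value inside the integral. By Tonelli this equals $\kappa_W\int_X G_W(x,y)|q(y)|\,dm(y)$. The paper bounds it directly, taking $B_{k_0}(x)\supseteq\{x\}\cup\operatorname{supp}q$:
\begin{itemize}
\item the strict descendants $D\ni x$, $D\subsetneq B_{k_0}(x)$, contribute at most $\|q\|_\infty\sum m(D)/W(D,D')$, by resistance summability;
\item the ancestors contribute at most $\|q\|_1\sum_{D\supseteq B_{k_0}(x)}1/W(D,D')$, which is finite by the extra hypothesis applied to a pair $x,y_0$ with $B_{x,y_0}=B_{k_0}(x)$.
\end{itemize}
So under the lemma's hypotheses, $G_W(x,\cdot)$ is always integrable against $|q|$, and the interchange is immediate.

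Your split also works: a level where $q$ is constant, finitely many annuli, and subtraction of the finite ancestor constant $c$ via $\int_P q\,dm=0$. However, the identity $\int G_W(x,\cdot)q\,dm=\int (G_W(x,\cdot)-c)q\,dm$ only makes sense once $\int G_W(x,\cdot)|q|\,dm<\infty$ is known. Your piecewise estimates do in effect establish this, so you reach the same absolute bound by a longer route. You should also either take $P\ni x$ or treat $x\notin P$ separately; in that case both sides vanish.

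The constant-subtraction idea is still the right mechanism for the paper's later renormalized kernels when $s\ge n$. There the ancestor series diverges, and only the cancellation $\int q\,dm=0$ gives the kernel representation a meaning.
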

\begin{proof}
    Since
    $\displaystyle\lim_{k\to+\infty}\varphi(v_{B_k})=f(x)$ and
    $\displaystyle\lim_{k\to-\infty}\varphi(v_{B_k})
    =\varphi(\infty)=0$, it follows from \eqref{eq:flux_at_B},
    \eqref{eq:the_measure_recovery_formula}, and Lemma
    \ref{lem:relation_partialn_and_g} that
    \begin{align*}
        f(x)
        =&\sum_{k=-\infty}^{+\infty}
        \bigl(\varphi(v_{B_k})-\varphi(v_{B_{k-1}})\bigr)\\
        =&\sum_{k=-\infty}^{+\infty}
        \frac{\Phi(B_k)}{W(B_k,B_{k-1})}\\
        =&\sum_{\substack{B\in\mathcal B\\B\ni x}}
        \frac{\Phi(B)}{W(B,B')}\\
        =&\sum_{\substack{B\in\mathcal B\\B\ni x}}
        \frac{\int_Bg_\varphi(y)\,dm(y)}{W(B,B')}\\
        =&\frac{1}{\kappa_W}
        \sum_{\substack{B\in\mathcal B\\B\ni x}}
        \frac{1}{W(B,B')}
        \int_B\partial_n^{(W)}\varphi(y)\,dm(y)\\
        =&G_W\bigl(\partial_n^{(W)}\varphi\bigr)(x).
    \end{align*}

    It remains to justify the passage from the cancellation sum to the
    pointwise two-variable kernel. Let $q\in \mathcal{S}_0(X)$. Choose a ball
    $B_{k_0}(x)$ in the chain through $x$ such that
    \begin{align*}
        \{x\}\cup\operatorname{supp}q\subseteq B_{k_0}(x).
    \end{align*}
    For the strict descendants of $B_{k_0}(x)$, the resistance summability
    gives
    \begin{align*}
        \sum_{\substack{D\in\mathcal B\\D\ni x\\
        D\subsetneq B_{k_0}(x)}}
        \frac{1}{W(D,D')}\int_D|q(y)|\,dm(y)\leq\|q\|_\infty
        \sum_{\substack{D\in\mathcal B\\D\ni x\\
        D\subsetneq B_{k_0}(x)}}
        \frac{m(D)}{W(D,D')}<+\infty.
    \end{align*}
    Choose $y_0\in B_{k_0}(x)$ in a maximal sub-ball different from the one
    containing $x$. Then $B_{x,y_0}=B_{k_0}(x)$. Hence, for the ancestors of
    $B_{k_0}(x)$, the additional hypothesis gives
    \begin{align*}
        \sum_{D\supseteq B_{k_0}(x)}\frac{1}{W(D,D')}
        \int_D|q(y)|\,dm(y)\leq\|q\|_1
        \sum_{D\supseteq B_{k_0}(x)}\frac{1}{W(D,D')}<+\infty.
    \end{align*}
    Hence Fubini's theorem applies, and
    \begin{align*}
        G_Wq(x)
        =&\frac{1}{\kappa_W}
        \sum_{\substack{B\in\mathcal B\\B\ni x}}
        \frac{1}{W(B,B')}
        \int_X\boldsymbol1_B(y)q(y)\,dm(y)\\
        =&\frac{1}{\kappa_W}\int_X
        \left(\sum_{\substack{B\in\mathcal B\\B\ni x}}
        \frac{\boldsymbol1_B(y)}{W(B,B')}\right)q(y)\,dm(y)\\
        =&\int_XG_W(x,y)q(y)\,dm(y).
    \end{align*}
    Therefore,
    \begin{align*}
        G_W(x,y)
        =\frac{1}{\kappa_W}
        \sum_{\substack{B\in\mathcal B\\B\ni x}}
        \frac{\boldsymbol1_B(y)}{W(B,B')}
        =\frac{1}{\kappa_W}
        \sum_{\substack{B\in\mathcal B\\B\ni x\\B\ni y}}
        \frac{1}{W(B,B')}
        =\frac{1}{\kappa_W}
        \sum_{\substack{B\in\mathcal B\\B\supseteq B_{x,y}}}
        \frac{1}{W(B,B')}.
    \end{align*}
    Because $m$ is non-atomic, the value assigned to $G_W(x,x)$ does not
    affect the integral. Without the additional summability over ancestors,
    the cancellation formula remains valid, but the positive pointwise
    kernel need not converge; in that case, the interchange above is not
    justified.
\end{proof}

We first reduce the Dirichlet-to-Neumann identity to an identity for the
Green operator. Whenever the
admissible Dirichlet problem is uniquely solvable on $\mathcal{S}_0(X)$ under the
hypotheses of Lemma \ref{lem:relation_partialn_and_g}, Lemma
\ref{lem:Green's_formula} gives
\begin{align*}
    G_W\partial_n^{(W)}=I\quad\text{on }\mathcal{S}_0(X).
\end{align*}
The algebraic Haar-basis decomposition described in the remark following
Lemma \ref{lem:Eigenfunctions_of_LC} shows directly that $L_C$ is bijective
on $\mathcal{S}_0(X)$: its inverse is obtained by dividing each Haar coefficient by
the corresponding positive eigenvalue $\lambda(B')$. Hence
\begin{align*}
    L_C=\partial_n^{(W)}\text{ on }\mathcal{S}_0(X)
    \quad\Longleftrightarrow\quad
    G_W=L_C^{-1}\text{ on }\mathcal{S}_0(X).
\end{align*}
For the forward implication, compose on the right with $L_C^{-1}$. For the
reverse implication, $G_W=L_C^{-1}$ is injective; hence
$G_W\partial_n^{(W)}=G_WL_C$ implies
$\partial_n^{(W)}=L_C$. This equivalence uses only unique solvability and
the preceding Green-operator identities, not the local homogeneity assumed
in the previous subsection.

We now determine the edge weight for which $G_W=L_C^{-1}$ on $\mathcal{S}_0(X)$.

\begin{theorem}
\label{thm:find_out_the_weight}
Assume that
\begin{align*}
    \kappa:=\lim_{k\to+\infty}
    \frac{\lambda(B_k(x))}{C(B_k(x))}\in(0,+\infty)
\end{align*}
is independent of $x$ and that the convergence is locally uniform in $x$.
Assume further that, for each fixed $k$, the series
\begin{align*}
    \sum_{j=k}^{+\infty}
    \frac{m(B_j(x))}{W(B_j(x),B_{j-1}(x))}
\end{align*}
converges and is locally bounded in $x$, and suppose that the coefficient in Lemma
\ref{lem:relation_partialn_and_g} converges locally
uniformly to a constant $\kappa_W\in(0,+\infty)$ independent of $x$. Then
\begin{align*}
    G_W=L_C^{-1}\text{ on }\mathcal{S}_0(X)
    \quad\Longleftrightarrow\quad
    W(B,B')=\frac{1}{\kappa}
    \frac{m(B)\lambda(B)\lambda(B')}{C(B)}.
\end{align*}
For the weight on the right-hand side, $\kappa_W=\kappa$.
\end{theorem}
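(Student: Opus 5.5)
The plan is to compare $G_W$ and $L_C^{-1}$ on the algebraic Haar eigenbasis $\{f_B\}$ of $\mathcal{S}_0(X)$ from the remark after Lemma \ref{lem:Eigenfunctions_of_LC}. By that lemma, $L_C^{-1}f_B=\lambda(B')^{-1}f_B$. It therefore suffices to compute $G_Wf_B$ explicitly, and then ask when it equals $\lambda(B')^{-1}f_B$ for every $B$.

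\textbf{Computing $G_Wf_B$.} Fix $x$. A ball $D\ni x$ that contains $B'$ has $\int_Df_B\,dm=0$. A ball disjoint from $B'$ contributes nothing. So only balls $D\ni x$ with $D\subseteq B'$ contribute.

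If $x\in B$, the contributing balls are $D=B'$, which gives zero, and the balls $D\subseteq B$ along the chain of $x$. The latter give $m(D)\bigl(\frac1{m(B)}-\frac1{m(B')}\bigr)$. Hence
\begin{align*}
G_Wf_B(x)=\frac{1}{\kappa_W}\Bigl(\frac1{m(B)}-\frac1{m(B')}\Bigr)\sum_{D\ni x,\,D\subseteq B}\frac{m(D)}{W(D,D')}.
\end{align*}

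If $x\in A\in\operatorname{sub}(B')$ with $A\neq B$, then similarly
\begin{align*}
G_Wf_B(x)=-\frac{1}{\kappa_W m(B')}\sum_{D\ni x,\,D\subseteq A}\frac{m(D)}{W(D,D')}.
\end{align*}
These sums converge by the summability hypothesis.

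Writing $S(x,A):=\sum_{D\ni x,D\subseteq A}m(D)/W(D,D')$, the identity $G_Wf_B=\lambda(B')^{-1}f_B$ for every $B$ is equivalent to the following condition:
\begin{align*}
S(x,A)=\frac{\kappa_W}{\lambda(A')}\quad\text{for every ball }A\text{ and every }x\in A.
\end{align*}
Here we use that $f_B$ takes nonzero values on both regions, and that each $A$ appears as a sub-ball of $A'$. By linearity over the basis, this condition is in turn equivalent to $G_W=L_C^{-1}$ on $\mathcal S_0(X)$.

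\textbf{Solving the condition.} Subtract the condition for $A$ from that for its child $A_1\ni x$. Since $A_1'=A$, this gives
\begin{align*}
\frac{m(A)}{W(A,A')}=\kappa_W\Bigl(\frac1{\lambda(A')}-\frac1{\lambda(A)}\Bigr)=\kappa_W\frac{C(A)}{\lambda(A)\lambda(A')}.
\end{align*}
So $W(A,A')=\kappa_W^{-1}m(A)\lambda(A)\lambda(A')/C(A)$.

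It then remains to identify $\kappa_W$ with $\kappa$. Insert this weight into the coefficient of Lemma \ref{lem:relation_partialn_and_g}. The resistance tail telescopes:
\begin{align*}
\sum_{j\ge k}\frac{m(B_j)}{W(B_j,B_{j-1})}=\frac{\kappa_W}{\lambda(B_{k-1})},
\end{align*}
using $\lambda(B_j)\to\infty$. Hence the coefficient equals
\begin{align*}
\frac{\lambda(B_k)\lambda(B_{k-1})}{C(B_k)}\cdot\frac{1}{\lambda(B_{k-1})}=\frac{\lambda(B_k)}{C(B_k)}\longrightarrow\kappa .
\end{align*}
Therefore $\kappa_W=\kappa$, which proves the forward direction.

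\textbf{Converse.} Start from the displayed weight with the constant $\kappa$. The same telescoping gives $S(x,A)=\kappa/\lambda(A')$ and, as above, shows the coefficient tends to $\kappa$, so $\kappa_W=\kappa$. This is exactly the condition above, hence $G_W=L_C^{-1}$.

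\textbf{Main obstacle.} The delicate point is the consistency between $\kappa_W$, which is defined through a limit involving $W$ itself, and the constant appearing in the weight. The argument handles this by first deriving $W$ in terms of the unknown $\kappa_W$, and only then computing the limit. A secondary point is the telescoping of the tail, which requires $1/\lambda(B_j)\to0$; this is guaranteed by the standing hypothesis $\lambda(B_k(x))\to\infty$.
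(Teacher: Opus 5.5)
Your proposal is correct and follows the paper's proof essentially step by step. Both arguments evaluate $G_Wf_B$ on the three regions, reduce $G_W=L_C^{-1}$ to $\kappa_W^{-1}\sum_{D\ni x,\,D\subsetneq B'}m(D)/W(D,D')=\lambda(B')^{-1}$ (your $S(x,A)$ condition is the same statement indexed by the sub-ball rather than the upper ball), and subtract consecutive levels to express $W$ in terms of $\kappa_W$; they then telescope the coefficient of Lemma \ref{lem:relation_partialn_and_g} to $\lambda(B_k)/C(B_k)\to\kappa$ to get $\kappa_W=\kappa$, and prove the converse by the same telescoping.
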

\begin{proof}
    Suppose first that
    \begin{align*}
        G_W=L_C^{-1}\quad\text{on }\mathcal{S}_0(X).
    \end{align*}
    Fix $B\in\mathcal B$ and denote its immediate super-ball by $B'$. By
    Lemma \ref{lem:Eigenfunctions_of_LC},
    \begin{align*}
        L_Cf_B=\lambda(B')f_B,
    \end{align*}
    and hence
    \begin{align*}
        G_Wf_B(x)=\frac{1}{\lambda(B')}f_B(x).
    \end{align*}
    On the other hand, the cancellation representation in Lemma
    \ref{lem:Green's_formula} gives
    \begin{align*}
        G_Wf_B(x)=\frac{1}{\kappa_W}
        \sum_{\substack{D\in\mathcal B\\D\ni x}}
        \frac{1}{W(D,D')}\int_Df_B(y)\,dm(y).
    \end{align*}
    We evaluate this expression on the three geometric regions appearing in
    the definition of $f_B$.
    \begin{enumerate}
        \item[]\textbf{Case 1.} If $x\notin B'$, every ball containing $x$
        is either disjoint from $B'$ or contains $B'$. In the first case
        $f_B$ vanishes on that ball, while in the second case
        \begin{align*}
            \int_Df_B\,dm=\int_{B'}f_B\,dm=0.
        \end{align*}
        Hence every term in the defining sum is zero, and
        \begin{align*}
            G_Wf_B(x)=0.
        \end{align*}

        \item[]\textbf{Case 2.} Suppose that $x\in B$. Every term with
        $D\supseteq B'$ vanishes because
        \begin{align*}
            \int_Df_B\,dm=\int_{B'}f_B\,dm=0.
        \end{align*}
        Every $D\ni x$ with $D\subsetneq B'$ lies in $B$, because $B$ is
        the maximal proper sub-ball of $B'$ containing $x$. On every such
        $D$, the function $f_B$ is constant and
        \begin{align*}
            \int_Df_B(y)\,dm(y)=f_B(x)m(D).
        \end{align*}
        Therefore,
        \begin{align*}
            G_Wf_B(x)
            =\frac{1}{\kappa_W}
            \sum_{\substack{D\ni x\\D\subsetneq B'}}
            \frac{1}{W(D,D')}\int_Df_B(y)\,dm(y)=\frac{f_B(x)}{\kappa_W}
            \sum_{\substack{D\ni x\\D\subsetneq B'}}
            \frac{m(D)}{W(D,D')}.
        \end{align*}

        \item[]\textbf{Case 3.} Suppose that $x\in B'\setminus B$. Then
        $x$ belongs to a unique maximal sub-ball $A$ of $B'$ different from
        $B$. Every strict descendant $D$ of $B'$ that contains $x$ lies in
        $A$, where $f_B=-1/m(B')$ is constant. Again the terms with
        $D\supseteq B'$ vanish. Therefore
        \begin{align*}
            G_Wf_B(x)
            =\frac{1}{\kappa_W}
            \sum_{\substack{D\ni x\\D\subsetneq B'}}
            \frac{1}{W(D,D')}\int_Df_B(y)\,dm(y)=\frac{f_B(x)}{\kappa_W}
            \sum_{\substack{D\ni x\\D\subsetneq B'}}
            \frac{m(D)}{W(D,D')}.
        \end{align*}
    \end{enumerate}
    Since $f_B(x)\ne0$ for every $x\in B'$, comparing this expression with
    $G_Wf_B=\lambda(B')^{-1}f_B$ gives, for every ball $B'$ and every
    $x\in B'$,
    \begin{align*}
        \frac{1}{\kappa_W}
        \sum_{\substack{D\ni x\\D\subsetneq B'}}
        \frac{m(D)}{W(D,D')}=\frac{1}{\lambda(B')}.
    \end{align*}

    Let $B$ be an arbitrary maximal proper sub-ball of $B'$ and fix $x\in B$.
    Apply the preceding identity first with upper ball $B'$ and then with
    upper ball $B$. The first sum is the second sum together with the single
    term $D=B$, which corresponds to the edge $(B,B')$. Subtracting the two
    identities therefore gives
    \begin{align*}
        \frac{1}{\kappa_W}\frac{m(B)}{W(B,B')}
        =\frac{1}{\lambda(B')}-\frac{1}{\lambda(B)}
        =\frac{\lambda(B)-\lambda(B')}
        {\lambda(B)\lambda(B')}
        =\frac{C(B)}{\lambda(B)\lambda(B')}.
    \end{align*}
    Hence
    \begin{align*}
        W(B,B')=\frac{1}{\kappa_W}
        \frac{m(B)\lambda(B)\lambda(B')}{C(B)}.
    \end{align*}

    It remains to identify $\kappa_W$. Substituting this expression into the
    coefficient in Lemma
    \ref{lem:relation_partialn_and_g} gives
    \begin{align*}
        \kappa_W=&\lim_{k\to+\infty}\frac{W(B_k,B_{k-1})}{m(B_k)}\sum_{j=k}^{+\infty}\frac{m(B_j)}{W(B_j,B_{k-j})}\\
        =&\lim_{k\to+\infty}\frac{1}{\kappa_W}\frac{\lambda(B_k)\lambda(B_{k-1})}{C(B_k)}\sum_{j=k}^{+\infty}\kappa_W\frac{C(B_j)}{\lambda(B_j)\lambda(B_{j-1})}\\
        =&\lim_{k\to+\infty}\frac{\lambda(B_k)\lambda(B_{k-1})}{C(B_k)}\sum_{j=k}^{+\infty}\left(\frac{1}{\lambda(B_{j-1})}-\frac{1}{\lambda(B_{j})}\right)\\
        =&\lim_{k\to+\infty}\frac{\lambda(B_k)}{C(B_k)}\\
        =&\kappa.
    \end{align*}
    where we have used
    $C(B_j)=\lambda(B_j)-\lambda(B_{j-1})$. The same coefficient converges
    to $\kappa_W$ by hypothesis. Therefore $\kappa_W=\kappa$, and the
    asserted necessary form of the weight follows.

    Conversely, suppose that
    \begin{align*}
        W(B,B')=\frac{1}{\kappa}
        \frac{m(B)\lambda(B)\lambda(B')}{C(B)}.
    \end{align*}
    Then
    \begin{align*}
        \frac{m(D)}{W(D,D')}
        =\kappa\frac{C(D)}{\lambda(D)\lambda(D')}=\kappa\left(
        \frac{1}{\lambda(D')}-\frac{1}{\lambda(D)}\right).
    \end{align*}
    Along a chain
    $B'=B_0(x)\supset B_1(x)\supset B_2(x)\supset\cdots\ni x$, the condition
    $\lambda(B_j(x))\to+\infty$ yields
    \begin{align*}
        \frac{1}{\kappa}\sum_{\substack{D\ni x\\D\subsetneq B'}}\frac{m(D)}{W(D,D')}=\sum_{j=1}^{+\infty}\left(\frac{1}{\lambda(B_{j-1})}-\frac{1}{\lambda(B_j)}\right)=\frac{1}{\lambda(B')}.
    \end{align*}
    This also proves the required resistance summability. Moreover,
    \begin{align*}
        \kappa_W=&\lim_{k\to+\infty}\frac{W(B_k,B_{k-1})}{m(B_k)}\sum_{j=k}^{+\infty}\frac{m(B_j)}{W(B_j,B_{k-j})}\\
        =&\lim_{k\to+\infty}\frac{1}{\kappa}\frac{\lambda(B_k)\lambda(B_{k-1})}{C(B_k)}\sum_{j=k}^{+\infty}\kappa\frac{C(B_j)}{\lambda(B_j)\lambda(B_{j-1})}\\
        =&\lim_{k\to+\infty}\frac{\lambda(B_k)\lambda(B_{k-1})}{C(B_k)}\sum_{j=k}^{+\infty}\left(\frac{1}{\lambda(B_{j-1})}-\frac{1}{\lambda(B_{j})}\right)\\
        =&\lim_{k\to+\infty}\frac{\lambda(B_k)}{C(B_k)}\\
        =&\kappa.
    \end{align*}
    Hence $\kappa_W=\kappa$. Applying the three-region calculation above
    now gives
    \begin{align*}
        G_Wf_B(x)=\frac{1}{\lambda(B')}f_B(x)
        =L_C^{-1}f_B(x),
        \qquad x\in X.
    \end{align*}
    The functions $f_B$ span each local Haar space, and the algebraic direct
    sum of these local spaces is $\mathcal{S}_0(X)$, by the remark following Lemma
    \ref{lem:Eigenfunctions_of_LC}. Therefore
    \begin{align*}
        G_W=L_C^{-1}\quad\text{on }\mathcal{S}_0(X).
    \end{align*}
    This establishes sufficiency and completes the proof.
\end{proof}

\begin{theorem}
    For the weight specified in Theorem \ref{thm:find_out_the_weight}, every
    $f\in \mathcal{S}_0(X)$ has a unique admissible solution of
    \eqref{eq:Extension_problem_TX}, and it satisfies
    \begin{align*}
        \partial_n^{(W)}\varphi=L_Cf.
    \end{align*}
\end{theorem}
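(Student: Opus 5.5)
The plan is to build the solution explicitly, prove it is the only admissible one, and then read off the Dirichlet-to-Neumann identity from the preceding results.

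\textbf{Existence.} Fix $f\in\mathcal S_0(X)$ and set $g:=\kappa^{-1}L_Cf$. Theorem \ref{thm:bijective_LC_partialn} is not needed here; the Haar-basis argument already gives $L_C(\mathcal S_0(X))\subseteq\mathcal S_0(X)$, so $g\in\mathcal S_0(X)$. Define
\begin{align*}
\varphi_g(v_B)=\sum_{D\supseteq B}\frac{1}{W(D,D')}\int_Dg\,dm .
\end{align*}
For the weight of Theorem \ref{thm:find_out_the_weight}, the converse half of that proof gives
\begin{align*}
\frac{m(D)}{W(D,D')}=\kappa\left(\lambda(D')^{-1}-\lambda(D)^{-1}\right).
\end{align*}
The resistance sums therefore telescope and are bounded by $\kappa/\lambda(B_k)$, which establishes the summability condition. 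The construction preceding Lemma \ref{lem:relation_partialn_and_g} then applies. It shows that $\varphi_g$ is harmonic, that $\varphi_g(\infty)=0$, and that its flux is $\int_B g\,dm$.

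It remains to identify the boundary trace. Along the chain through $x$, the values $\varphi_g(v_{B_k(x)})$ converge to the full cancellation sum. This sum is exactly $\kappa\,G_W g(x)$ with $\kappa_W=\kappa$, so the trace equals $G_W(L_Cf)(x)=L_C^{-1}L_Cf=f$ by Theorem \ref{thm:find_out_the_weight}.

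The trace must also converge in $L^2(X,m)$. Choose a ball $P$ containing $\operatorname{supp}g$. For fine heights, $g$ is constant on the relevant balls, and the tail of the telescoping series is $g(x)\kappa/\lambda(B_k(x))$. On $P$ this tends to $0$ uniformly, by the local uniformity and compactness hypotheses. Off $P$ the tail is identically $0$. Hence the trace converges to $f$ in $L^2(X,m)$. This shows that $\varphi_g$ is admissible.

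\textbf{Uniqueness.} Let $\varphi_1,\varphi_2$ be two admissible solutions. Their difference $\psi$ is admissible with trace $0$ and flux density $g_\psi\in\mathcal S_0(X)$. Lemma \ref{lem:relation_partialn_and_g} applies, since $\kappa_W=\kappa$ was verified in Theorem \ref{thm:find_out_the_weight}, and gives $\partial_n^{(W)}\psi=\kappa g_\psi$. Lemma \ref{lem:Green's_formula} then yields
\begin{align*}
0=G_W(\kappa g_\psi)=L_C^{-1}(\kappa g_\psi),
\end{align*}
so $g_\psi=0$ by injectivity of $L_C^{-1}$ on $\mathcal S_0(X)$. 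With zero flux on every edge, $\psi$ is constant on the connected tree, and $\psi(\infty)=0$ forces $\psi\equiv0$.

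\textbf{The Dirichlet-to-Neumann identity.} Lemma \ref{lem:relation_partialn_and_g} gives $\partial_n^{(W)}\varphi=\kappa g=L_Cf$ in $L^2(X,m)$. Alternatively, one can invoke the equivalence $L_C=\partial_n^{(W)}\Leftrightarrow G_W=L_C^{-1}$ stated just before Theorem \ref{thm:find_out_the_weight}; unique solvability, which that equivalence requires, has now been established.

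The main obstacle I expect is the $L^2$ and pointwise convergence of the trace, together with verifying that the hypotheses of Lemma \ref{lem:relation_partialn_and_g} hold uniformly on $\operatorname{supp}g$. I would handle both through the explicit telescoping tail $\kappa/\lambda(B_k(x))$ and the assumed local uniformity of $\lambda(B_k)/C(B_k)\to\kappa$, which gives uniform control on the compact set $P$.
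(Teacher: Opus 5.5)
Your proposal is correct and follows essentially the same route as the paper. Both take $g=\kappa^{-1}L_Cf$, build the solution by the flux construction, and identify the trace through $G_W=L_C^{-1}$ together with the telescoping tail $\kappa g(x)/\lambda(B_k(x))$; uniqueness comes from injectivity of $G_W$ on $\mathcal{S}_0(X)$, and the identity $\partial_n^{(W)}\varphi=L_Cf$ from Lemma \ref{lem:relation_partialn_and_g}. The only difference is minor: the paper gets uniform decay of $\lambda(B_k(x))^{-1}$ on $P$ from Dini's theorem (these functions are locally constant and decrease to zero), while you use the local uniformity of $\lambda/C\to\kappa$, which also works.
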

\begin{proof}
    For $f\in \mathcal{S}_0(X)$, set
    \begin{align*}
        g_\varphi=\frac{1}{\kappa}L_Cf,
    \end{align*}
    and define
    \begin{align*}
        \varphi(v_B)=\sum_{D\supseteq B}
        \frac{1}{W(D,D')}\int_Dg_\varphi(y)\,dm(y).
    \end{align*}
    The algebraic Haar expansion and Lemma
    \ref{lem:Eigenfunctions_of_LC} show that $L_Cf\in \mathcal{S}_0(X)$. Hence the
    flux construction following
    \eqref{eq:the_measure_recovery_formula} gives
    \begin{align*}
        \Delta_W\varphi=0,
        \qquad
        \varphi(\infty)=0,
        \qquad
        \Phi(B)=\int_Bg_\varphi\,dm.
    \end{align*}
    By Theorem \ref{thm:find_out_the_weight},
    $G_W=L_C^{-1}$ and $\kappa_W=\kappa$. Thus the boundary trace is
    \begin{align*}
        \varphi|_{\partial\mathcal T_X}
        =\kappa G_Wg_\varphi
        =G_WL_Cf=f.
    \end{align*}
    To verify explicitly that this is the required $L^2$ trace, choose a
    ball $P$ containing $\operatorname{supp}f\cup
    \operatorname{supp}g_\varphi$. Because $g_\varphi$ is locally constant
    and compactly supported, there is $k_0$ such that, for every
    $k\geq k_0$, it is constant on every ball $B_k(x)$ that meets $P$. Using
    \begin{align*}
        \frac{m(B_j(x))}{W(B_j(x),B_{j-1}(x))}
        =\kappa\left(
        \frac{1}{\lambda(B_{j-1}(x))}
        -\frac{1}{\lambda(B_j(x))}\right),
    \end{align*}
    telescoping the flux increments gives
    \begin{align*}
        f(x)-\varphi(v_{B_k(x)})
        =g_\varphi(x)\sum_{j=k+1}^{+\infty}
        \frac{m(B_j(x))}{W(B_j(x),B_{j-1}(x))}
        =\frac{\kappa g_\varphi(x)}{\lambda(B_k(x))}.
    \end{align*}
    The difference vanishes outside $P$ for such $k$. On $P$, the functions
    $x\mapsto\lambda(B_k(x))^{-1}$ are continuous (indeed, locally constant)
    and decrease pointwise to zero. Dini's theorem therefore gives uniform
    convergence on the compact set $P$. Hence
    \begin{align*}
        \left\|f-\varphi(v_{B_k(\cdot)})\right\|_{L^2(X,m)}
        \leq\kappa
        \sup_{x\in P}\frac{1}{\lambda(B_k(x))}
        \|g_\varphi\|_{L^2(X,m)}\longrightarrow0.
    \end{align*}
    Lemma \ref{lem:relation_partialn_and_g} then yields
    \begin{align*}
        \partial_n^{(W)}\varphi
        =\kappa g_\varphi=L_Cf.
    \end{align*}
    In particular, for $f=f_B$ this gives
    \begin{align*}
        \partial_n^{(W)}\varphi_{f_B}=\lambda(B')f_B.
    \end{align*}
    Thus, for the weight determined in Theorem
    \ref{thm:find_out_the_weight}, the coefficient in Lemma
    \ref{lem:eigvalue_deformed_derivative_TX} is
    $\mu_B=\lambda(B')$.

    For uniqueness, suppose that two admissible solutions have the same
    trace and the same normalization at $\infty$. Their difference has flux
    density $g\in \mathcal{S}_0(X)$ and zero trace. The telescoping identity in Lemma
    \ref{lem:Green's_formula} gives
    \begin{align*}
        0=\kappa G_Wg.
    \end{align*}
    Since $G_W=L_C^{-1}$ is injective on $\mathcal{S}_0(X)$, it follows that $g=0$.
    Thus every edge increment of the difference vanishes. The tree is
    connected, and the value at $\infty$ is zero; hence the two solutions
    coincide.
\end{proof}

\begin{remark}
    If $(X,d,m)=(\mathbb{Q}_p^n,\|\cdot\|_p,d\boldsymbol{x})$, let
    $B_k=\boldsymbol{x}+p^k\mathbb Z_p^n$. Then
    \begin{align*}
        m(B_k)=p^{-nk},
        \qquad
        \lambda(B_k)=p^{(k+1)s},
        \qquad
        C(B_k)=p^{(k+1)s}(1-p^{-s}).
    \end{align*}
    Hence $\kappa=(1-p^{-s})^{-1}$ and
    \begin{align*}
        W(B_k,B_{k-1})
        =\frac{1}{\kappa}
        \frac{m(B_k)\lambda(B_k)\lambda(B_{k-1})}{C(B_k)}
        =(1-p^{-s})
        \frac{p^{-nk}p^{(k+1)s}p^{ks}}
        {p^{(k+1)s}(1-p^{-s})}
        =p^{k(s-n)},
    \end{align*}
    recovering the weighted extension for the Vladimirov--Taibleson operator.

    The pointwise Green representation takes different forms in three regimes. If $B_{\boldsymbol{x},\boldsymbol{y}}=B_k$ and $0<s<n$, then
    \begin{align*}
        \sum_{D\supseteq B_{\boldsymbol{x},\boldsymbol{y}}}\frac{1}{W(D,D')}
        =\sum_{j=-\infty}^{k}p^{j(n-s)}
        =\frac{\|\boldsymbol{x}-\boldsymbol{y}\|_p^{s-n}}{1-p^{s-n}},
    \end{align*}
    and hence, for $q\in \mathcal{S}_0(\mathbb{Q}_p^n)$,
    \begin{align*}
        G_Wq(\boldsymbol{x})=\frac{1-p^{-s}}{1-p^{s-n}}
        \int_{\mathbb Q_p^n}\|\boldsymbol{x}-\boldsymbol{y}\|_p^{s-n}
        q(\boldsymbol{y})\,d\boldsymbol{y}.
    \end{align*}
    When $s=n$, the positive series over ancestors diverges. Removing its additive cutoff constant, which pairs to zero with every $q\in \mathcal{S}_0(\mathbb{Q}_p^n)$, gives the logarithmic representative
    \begin{align*}
        G_Wq(\boldsymbol{x})=-(1-p^{-n})\int_{\mathbb Q_p^n}
        \log_p\|\boldsymbol{x}-\boldsymbol{y}\|_p\,q(\boldsymbol{y})\,d\boldsymbol y.
    \end{align*}
    When $s>n$, the same cancellation prescription gives the homogeneous
    finite-part representative
    \begin{align*}
        G_Wq(\boldsymbol{x})=\frac{1-p^{-s}}{1-p^{s-n}}
        \int_{\mathbb Q_p^n}\|\boldsymbol{x}-\boldsymbol{y}\|_p^{s-n}
        q(\boldsymbol{y})\,d\boldsymbol y,
    \end{align*}
    again modulo an additive constant in the kernel. Thus
    $G_W=(D^s)^{-1}$ on $\mathcal{S}_0(\mathbb{Q}_p^n)$ for every $s>0$, whereas an
    unrenormalized pointwise positive Green kernel exists only when
    $0<s<n$.
\end{remark}

Finally, we prove the energy identity by monotone exhaustion. Let
$f\in \mathcal{S}_0(X)$, and let $\varphi$ be its canonical extension, with
\begin{align*}
    g_\varphi=\frac{1}{\kappa}L_Cf.
\end{align*}
Choose a ball $P$ containing $\operatorname{supp}g_\varphi$. As in the proof
of Theorem \ref{thm:Green's_first_identity_TX}, the edge increment of
$\varphi$ vanishes unless the lower ball of that edge is contained in $P$.
Consequently, only finitely many terms are nonzero in each global
horocyclic truncation $\mathcal T_X^R=\{v_B:h(B)\leq R\}$.

Apply Lemma
\ref{lem:Green's first identity for the truncated tree TXR} to
$g=h=\varphi$. Harmonicity removes the bulk term. For $x\in X$, put
$\varphi_R(x)=\varphi(v_{B_R(x)})$. Flux conservation and
\eqref{eq:the_measure_recovery_formula} give
\begin{align*}
    \frac{1}{2}
    \sum_{\substack{v_A,v_B\in \mathcal{T}_{X}^R\\v_A\sim v_B}}
    W(A,B)|\varphi(v_A)-\varphi(v_B)|^2
    =&\sum_{v_A\in\partial\mathcal T_X^R}
    \varphi(v_A)\sum_{D\in\text{sub}(A)}\Phi(D)\\
    =&\sum_{v_A\in\partial\mathcal T_X^R}
    \varphi(v_A)\Phi(A)\\
    =&\sum_{v_A\in\partial\mathcal T_X^R}
    \varphi(v_A)\int_Ag_\varphi(x)\,dm(x)\\
    =&\int_X\varphi_R(x)g_\varphi(x)\,dm(x).
\end{align*}
Since $\varphi_R\to f$ in $L^2(X,m)$, the right-hand side converges to
\begin{align*}
    \int_Xf(x)g_\varphi(x)\,dm(x)=\frac{1}{\kappa}\int_Xf(x)L_Cf(x)\,dm(x).
\end{align*}
The truncated energies on the left are nonnegative. The sets of interior edges in these truncations increase to the set of all edges as $R\to+\infty$, so the truncated energies increase to the full-tree energy. Consequently,
\begin{align*}
    \frac{1}{2}
    \sum_{\substack{v_A,v_B\in \mathcal{T}_{X}\\v_A\sim v_B}}
    W(A,B)|\varphi(v_A)-\varphi(v_B)|^2
    =\frac{1}{\kappa}\int_X
    f(x)L_Cf(x)\,dm(x).
\end{align*}

\section*{Acknowledgments}
I am grateful to Professor An Huang for his insightful ideas and valuable suggestions, and to Professor Bobo Hua for his continued support throughout this work.
\section*{Data availability}
Data sharing is not applicable to this article as no datasets were generated or analysed during the current study.
\section*{Competing interests} 
The authors have no relevant interests to disclose.

\bibliographystyle{plain}
\bibliography{references}
\noindent\emph{Yaojia Sun, School of Mathematical Sciences, Fudan University,
Shanghai 200433, China}\\
\noindent\emph{Email:}
\href{mailto:26110180043@m.fudan.edu.cn}{\texttt{26110180043@m.fudan.edu.cn}}
\end{document}